\newtheorem{theorem}{Theorem}
\newtheorem{lemma}[theorem]{Lemma}
\newtheorem{definition}[theorem]{Definition}
\newtheorem{claim}[theorem]{Claim}
\newcommand{\set}[1]{\left\{#1\right\}}
\renewcommand{\bar}[1]{\overline{#1}}
\newcommand{\la}{\lambda}
\newcommand{\N}{\mathbb{N}}
\newcommand{\Z}{\mathbb{Z}}
\numberwithin{equation}{section}
\newcommand{\redx}{{\color{red} \times}}
\newcommand{\bluex}{{\color{blue} \times}}
\newcommand{\greenx}{{\color{green} \times}}
\newcommand{\redo}{{\color{red} \circ}}
\newcommand{\st}[1]{\overset{*}{#1}}
\newcommand{\un}[1]{\underline{#1}}
\author{Sittipong Thamrongpairoj \affiliationmark{1}
  \and Jeffrey B. Remmel \affiliationmark{2}}
\title{Positional Marked Patterns in Permutations}
\affiliation{
  Faculty of Science at Sriracha, Kasetsart University Sriracha Campus\\
  Department of Mathematics, University of California San Diego}
\keywords{permutation patterns, Wilf-equivalent, positional marked patterns}
\begin{document}
\maketitle
\begin{abstract}
We introduce the notion of positional marked pattern, which is a permutation $\tau$ with one element underlined. Given a permutation $\sigma$, we say that $\sigma$ has a $\tau$-match at position $i$ if $\tau$ occurs in $\sigma$ in such a way that $\sigma_i$ plays the role of the underlined element in the occurrence. We let $pmp_\tau(\sigma)$ denote the number of positions $i$ which $\sigma$ has a $\tau$-match. This defines a new class of statistics on permutations, where we study such statistics and prove a number of results. In particular, we prove that two positional marked patterns $1\underline{2}3$ and $1\underline{3}2$ give rise to two statistics that have the same distribution. The equidistibution phenomenon also occurs in other several collections of patterns like $\left \{1\underline{2}3 , 1\underline{3}2 \right \}$, and $\left \{ 1\un234, 1\un243, \un2134, \un2 1 4 3 \right \}$, as well as two positional marked patterns of any length $n$: $\left \{ 1\underline 2\tau , \underline 21\tau \right \}$.
\end{abstract}

\section{Introduction}\label{intro}

The notion of mesh patterns was first introduced by \index{Br\"{a}nd\'en} \index{Claesson} Br\"{a}nd\'en and Claesson \cite{BC}. Many  authors have further studied this notion. In particular, the notion of marked mesh pattern was introduced by \index{\'Ulfarsson} \'Ulfarsson \cite{U}, and the study of the distributions of quadrant marked mesh patterns in permutations was initiated by \index{Kitaev} \index{Remmel} Kitaev and Remmel in \cite{KR}.

Here, we recall the definition of a quadrant \index{Marked mesh pattern} marked mesh pattern. Let $\N = \set{0,1,2,\ldots}$ denote the set of natural numbers and $S_n$ denote the symmetric group of permutations of $1,\ldots,n$. We consider the \index{Graph of permutation} graph of $\sigma, G(\sigma)$, to be the set of points $(i,\sigma_i)$. We are interested in the points that lie in the four quadrants I, II, III, IV of that coordinate system. For any $a,b,c,d \in \N$ and any $\sigma = \sigma_1 \ldots \sigma_n \in S_n$, we say that $\sigma_i$ matches {\em the simple marked mesh pattern} $MMP(a,b,c,d)$ if in $G(\sigma)$ relative to the coordinate system which has the point $(i,\sigma_i)$ as its origin, there are at least $a$ points in quadrant I, at least $b$ points in quadrant II, at least $c$ points in quadrant III, and at least $d$ points in quadrant IV. We let $mmp^{(a,b,c,d)}(\sigma)$ denote the number of $i$ such that $\sigma_i$ matches the marked mesh pattern $MMP(a,b,c,d)$ in $\sigma$.


For example, let $\sigma = 647913258$ and consider simple marked mesh pattern \allowbreak $MMP(2,0,2,0)$. We look for points $(i,\sigma_i)$ in the graph of $\sigma$ such that there are at least 2 points to the top right of $(i,\sigma_i)$ and at least 2 points to the bottom left of $(i,\sigma_i)$. The graph of $\sigma$ and an illustration of $MMP(2,0,2,0)$ are shown in Figure \ref{exmmp}. As we see in Figure \ref{exmmpat7}, at point $(3,7)$, there are two points to the top right and 2 points to the bottom left of point $(3,7)$, so $7$ matches $MMP(2,0,2,0)$ in $\sigma$. At point $(6,3)$, on the other hand, there are 2 points to the top right but only 1 point to the bottom left of point $(6,3)$. Therefore, $3$ does not match $MMP(2,0,2,0)$ in $\sigma$. In this case, only 7 matches $MMP(2,0,2,0)$ in $\sigma$, so $mmp^{(2,0,2,0)}(\sigma) = 1$. Quadrant marked mesh pattern was studied further in \cite{MD}, \cite{KR2}, \cite{KR3}, \cite{KRT}, \cite{KRT2} and \cite{QR}.
\begin{figure}[h]
\begin{center}
\begin{tikzpicture}[scale=.5]
\draw [<->] (-1,0) -- (10,0);
\draw [<->] (0,-1) -- (0,10);
\filldraw (1,6) circle (2pt);
\filldraw (2,4) circle (2pt);
\filldraw (3,7) circle (2pt);
\filldraw (4,9) circle (2pt);
\filldraw (5,1) circle (2pt);
\filldraw (6,3) circle (2pt);
\filldraw (7,2) circle (2pt);
\filldraw (8,5) circle (2pt);
\filldraw (9,8) circle (2pt);
\draw [<->] (15,2) -- (15,8);
\draw [<->] (12,5) -- (18,5);
\node at (16.5,6.5) {$\ge 2$};
\node at (12.5,3.5) {$\ge 2$};
\end{tikzpicture}
\end{center}
\caption{Example for marked mesh pattern.}
\label{exmmp}
\end{figure}
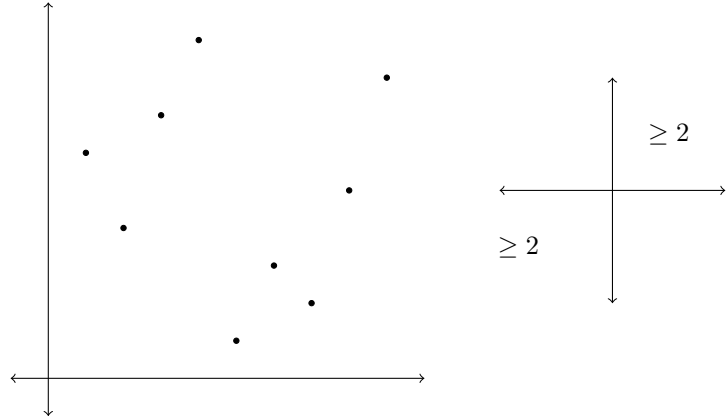

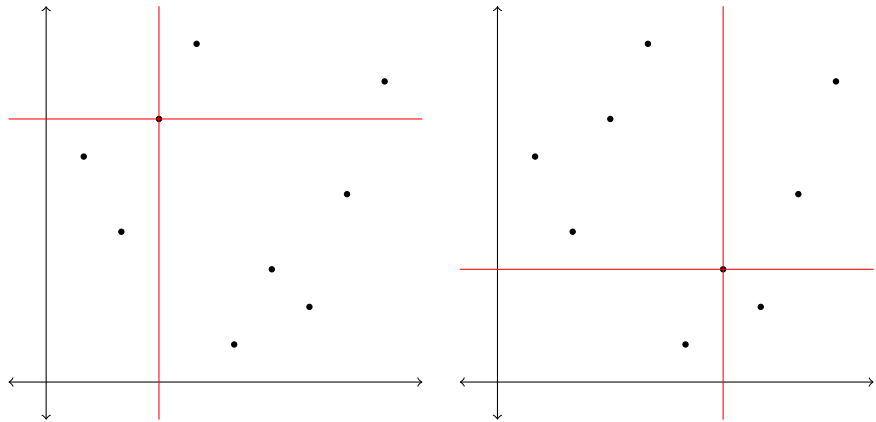
\begin{figure}
\begin{center}
\begin{tikzpicture}[scale=.5]
\draw [<->] (-1,0) -- (10,0);
\draw [<->] (0,-1) -- (0,10);
\filldraw (1,6) circle (2pt);
\filldraw (2,4) circle (2pt);
\filldraw (3,7) circle (2pt);
\filldraw (4,9) circle (2pt);
\filldraw (5,1) circle (2pt);
\filldraw (6,3) circle (2pt);
\filldraw (7,2) circle (2pt);
\filldraw (8,5) circle (2pt);
\filldraw (9,8) circle (2pt);
\draw[red] (3,-1) -- (3,10);
\draw[red] (-1,7) -- (10,7);

\draw [<->] (11,0) -- (22,0);
\draw [<->] (12,-1) -- (12,10);
\filldraw (13,6) circle (2pt);
\filldraw (14,4) circle (2pt);
\filldraw (15,7) circle (2pt);
\filldraw (16,9) circle (2pt);
\filldraw (17,1) circle (2pt);
\filldraw (18,3) circle (2pt);
\filldraw (19,2) circle (2pt);
\filldraw (20,5) circle (2pt);
\filldraw (21,8) circle (2pt);
\draw[red] (18,-1) -- (18,10);
\draw[red] (11,3) -- (22,3);
\end{tikzpicture}
\end{center}
\caption{7 matches $MMP(2,0,2,0)$ in $\sigma$ but 3 does not.}
\label{exmmpat7}
\end{figure}

We studied a generalization of quadrant marked mesh patterns. Here, we make a connection between quadrant marked mesh pattern and classical permutation patterns. Let $\sigma = \sigma_1 \sigma_2 \ldots \sigma_n \in S_n$ and $\tau = \tau_1\tau_2\ldots \tau_k \in S_k$. We say that $\sigma$ contains pattern $\tau$ if there is a subsequence $\sigma' = \sigma_{i_1} \sigma_{i_2} \ldots \sigma_{i_k}$ of $\sigma$ such that $\sigma'$ and $\tau$ have the same relative order. That is, $\sigma_{i_j} < \sigma_{i_k}$ if and only if $\tau_j < \tau_k$.  For example, permutation $15324$ contains pattern $123$ as the subsequence $134$ (and $124$) has the same relative order as pattern $123$. We now consider the pattern $MMP(1,0,1,0)$. An element in a permutation matches $MMP(1,0,1,0)$ when there is at least one point to the top right and at least one point to the bottom left of the interested point. An equivalent way to think about $MMP(1,0,1,0)$ for any permutation $\sigma$ is to count the number of elements in $\sigma$ that can be the midpoint of the pattern $123$ in $\sigma$. One can ask similar questions but for different patterns and positions. For example, one may consider the number of elements in $\sigma$ that can be the starting point of the pattern $2314$. 

Here, we briefly define the notion of {\em  positional marked patterns (PMP)} \index{Positional marked pattern} which enumerates the described statistics. A positional marked pattern of length $k$ is a permutation of $[k]$ with one of the elements underlined. Given any positional marked pattern $\tau$, let $\pi(\tau)$ denote the element in $S_k$ obtained from $\tau$ by removing the underline, and let $u(\tau)$ be the position of the underlined element in $\tau$. Given $\sigma = \sigma_1 \sigma_2 \ldots \sigma_n \in S_n$ and a positional marked pattern $\tau$, we say that $\sigma$ has a $\tau$-match at position $\ell$ if $\sigma$ contains the pattern $\pi(\tau)$ in such a way that the $\ell$-th element in $\sigma$ plays the role of the underlined elements in $\tau$. Let $pmp_\tau(\sigma)$ denote the number of positions $\ell$ such that $\sigma$ has a $\tau$-match at position $\ell$. We will carefully define positional marked patterns in Section \ref{chapdef}.

Recall that, in classical patterns, two patterns $\tau_1$ and $\tau_2$ are \index{Classical Wilf-equivalent} Wilf-equivalent if the number of $\sigma$ in $S_n$ avoiding $\tau_1$ is the same as that of $\tau_2$ for all $n \in \N$. Here, we adopt the vocabulary from the classical definition to our definition. Given two positional marked patterns $\tau_1$ and $\tau_2$, we say that $\tau_1$ and $\tau_2$ are \index{$PMP$-Wilf-equivalent}{\em $PMP$-Wilf-equivalent} if $pmp_{\tau_1}$ and $pmp_{\tau_2}$ have the same distribution over $S_n$ for all $n$. Our main goal is to classify $PMP$-Wilf-equivalent classes for positional marked patterns. 

The paper outlines as follows: In Section \ref{chapdef}, we give a precise definition and some preliminary results on positional marked patterns. In Section \ref{len3}, we study positional marked patterns of length $3$. We prove that there are only two $PMP$-Wilf-equivalent classes for positional marked patterns of length 3. The result follows from the following theorems.

\begin{restatable}{theorem}{threeone}
\label{threeone}
Two positional marked patterns $\un 1 2 3$ and $\un 1 3 2$ are $PMP$-Wilf-equivalent. 
\end{restatable}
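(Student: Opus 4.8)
The plan is to prove, by induction on $n$, a refined statement that also tracks a suffix statistic, and then to extract the theorem as a marginal. For $\sigma=\sigma_1\cdots\sigma_n\in S_n$ write $d(\sigma)$ for the length of the longest decreasing suffix of $\sigma$ (the largest $k$ with $\sigma_{n-k+1}>\cdots>\sigma_n$) and $a(\sigma)$ for the length of its longest increasing suffix; note $d(\sigma),a(\sigma)\ge 1$ always. I would show that the pair $(pmp_{\un{1}23},d)$ has the same distribution on $S_n$ as the pair $(pmp_{\un{1}32},a)$; the theorem is then the specialization $y=1$ of the resulting bivariate identity. The base case $n=1$ is immediate, both pairs being $(0,1)$.

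The recursion is driven by deleting the entry $1$. Given $\sigma\in S_n$, let $p$ be the position of the entry $1$ and let $\bar\sigma\in S_{n-1}$ be the standardization of the word obtained from $\sigma$ by deleting that entry; then $\sigma\mapsto(\bar\sigma,p)$ is a bijection $S_n\to S_{n-1}\times\set{1,\dots,n}$. Two observations are needed. First, for a position $\ell\neq p$, the set of entries lying to the right of $\sigma_\ell$ and larger than $\sigma_\ell$ is unchanged by deleting $\sigma_p=1$ (which is smaller than $\sigma_\ell$, hence never such an entry), so $\ell$ is a $\un{1}23$-match (resp.\ $\un{1}32$-match) of $\sigma$ if and only if the corresponding position is a $\un{1}23$-match (resp.\ $\un{1}32$-match) of $\bar\sigma$. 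Second, because $\sigma_p=1$ is the global minimum, position $p$ is a $\un{1}23$-match of $\sigma$ exactly when the suffix $\sigma_{p+1}\cdots\sigma_n$ has an ascent, i.e.\ is not decreasing, i.e.\ $n-p>d(\bar\sigma)$, and it is a $\un{1}32$-match exactly when that suffix has a descent, i.e.\ $n-p>a(\bar\sigma)$. Hence $pmp_{\un{1}23}(\sigma)=pmp_{\un{1}23}(\bar\sigma)+\psbra{n-p>d(\bar\sigma)}$ and $pmp_{\un{1}32}(\sigma)=pmp_{\un{1}32}(\bar\sigma)+\psbra{n-p>a(\bar\sigma)}$. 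Finally, a short direct check records how the suffix statistics change when the entry $1$ is reinserted: if $p=n$ then $d(\sigma)=d(\bar\sigma)+1$ and $a(\sigma)=1$, while if $p<n$ then $d(\sigma)=\min(n-p,\,d(\bar\sigma))$, and $a(\sigma)=n-p+1$ when $a(\bar\sigma)\ge n-p$ and $a(\sigma)=a(\bar\sigma)$ otherwise.

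Summing over $p=1,\dots,n$ now expresses the bivariate generating function purely in terms of the joint distribution of $(pmp_{\un{1}23},d)$ on $S_{n-1}$:
\[
\sum_{\sigma\in S_n} q^{pmp_{\un{1}23}(\sigma)}\,y^{d(\sigma)}
=\sum_{\pi\in S_{n-1}} q^{pmp_{\un{1}23}(\pi)}\Bigl(\,\sum_{j=1}^{d(\pi)+1} y^{j}\;+\;(n-1-d(\pi))\,q\,y^{d(\pi)}\Bigr).
\]
Carrying out the identical computation on the $\un{1}32$ side, $\sum_{\sigma\in S_n} q^{pmp_{\un{1}32}(\sigma)}\,y^{a(\sigma)}$ equals the right-hand side with $pmp_{\un{1}23}$ and $d$ replaced throughout by $pmp_{\un{1}32}$ and $a$ --- literally the same formula in terms of the joint distribution of $(pmp_{\un{1}32},a)$ on $S_{n-1}$. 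By the inductive hypothesis those two joint distributions on $S_{n-1}$ agree, so the two bivariate generating functions on $S_n$ agree, which is exactly the refined claim for $S_n$. Putting $y=1$ (equivalently, comparing coefficients of $q^m$) gives the theorem.

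The point I expect to require the most care is the bookkeeping of how $d$ and $a$ transform when the entry $1$ is reinserted --- in particular that the case $p=n$ is asymmetric (it adds $1$ to $d$ but resets $a$ to $1$), and yet, after summing over $p$, the two resulting polynomials coincide term by term: the stray monomial $y^{1}$ produced by $p=n$ on the $\un{1}23$ side reappears inside the $p<n$ range on the $\un{1}32$ side. The other thing to verify carefully is the first observation above, namely that deleting the global minimum really leaves the $\un{1}23$- and $\un{1}32$-match status of every other position unchanged.
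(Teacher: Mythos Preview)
Your proof is correct and takes essentially the same approach as the paper. Both arguments refine by the position of the last ascent (for $\un123$) versus the last descent (for $\un132$)\,---\,your statistics $d$ and $a$ are just $n$ minus those positions\,---\,and both recurse by deleting the entry $1$ and tracking how the refined statistic and the $pmp$ count change; the paper writes this as a family of univariate recursions $P_{n,\tau,k}(x)$ while you package it as a single bivariate identity, but the content is identical.
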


\begin{restatable}{theorem}{threetwo}
\label{threetwo}
Two positional marked patterns $1 \un 2 3$ and $1 \un 3 2$ are $PMP$-Wilf-equivalent.
\end{restatable}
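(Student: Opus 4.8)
The plan is first to restate both statistics as conditions on the graph of $\sigma$, and then to prove equidistribution --- most likely by an explicit bijection, with a generating-function computation as a fallback. For $\sigma=\sigma_1\cdots\sigma_n\in S_n$, unwinding the definition shows that $\sigma$ has a $1\un 2 3$-match at position $\ell$ exactly when $\sigma_\ell$ is the positional and value middle of an occurrence of $123$; since the requirements ``$\sigma_i<\sigma_\ell$ for some $i<\ell$'' and ``$\sigma_j>\sigma_\ell$ for some $j>\ell$'' are independent, this holds iff $\sigma_\ell$ is neither a left-to-right minimum nor a right-to-left maximum of $\sigma$, so
\[
pmp_{1\un 2 3}(\sigma)=n-\#\{\ell:\ \sigma_\ell\text{ is a left-to-right minimum or a right-to-left maximum of }\sigma\}.
\]
Likewise $\sigma$ has a $1\un 3 2$-match at $\ell$ iff there are $i<\ell<j$ with $\sigma_i<\sigma_j<\sigma_\ell$; writing $m_\ell=\min\{\sigma_1,\dots,\sigma_{\ell-1}\}$ and noting that the smallest such $\sigma_i$ is the most permissive choice, this is equivalent to $m_\ell<\sigma_\ell$ together with the existence of a value in the open interval $(m_\ell,\sigma_\ell)$ occurring to the right of $\ell$; equivalently, $\ell$ fails to be a $1\un 3 2$-match iff the set of values below $\sigma_\ell$ that occur to the left of $\ell$ is a final segment $\{m_\ell,\dots,\sigma_\ell-1\}$ of $\{1,\dots,\sigma_\ell-1\}$ (or empty, which is the left-to-right-minimum case). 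Since every occurrence of a length-$3$ pattern has a well-defined middle entry, $pmp_{1\un 2 3}(\sigma)=0$ iff $\sigma$ avoids $123$ and $pmp_{1\un 3 2}(\sigma)=0$ iff $\sigma$ avoids $132$, so the two distributions already agree in their constant term, both equal to the Catalan number $C_n$ --- a useful sanity check.

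To prove $\sum_{\sigma\in S_n}t^{pmp_{1\un 2 3}(\sigma)}=\sum_{\sigma\in S_n}t^{pmp_{1\un 3 2}(\sigma)}$, I would induct on $n$ and construct a bijection $\phi_n\colon S_n\to S_n$ with $pmp_{1\un 2 3}(\sigma)=pmp_{1\un 3 2}(\phi_n(\sigma))$. The cases $n\le 3$ are immediate (take $\phi_3$ to send $123$ to $132$ and fix the remaining four permutations). For the inductive step I would condition on the position of the entry $1$, writing $\sigma=\alpha\,1\,\beta$: that slot contributes to neither statistic; every entry of $\beta$ has $1$ to its left and hence is never a left-to-right minimum, so $\beta$ contributes $|\beta|$ minus its number of right-to-left maxima to $pmp_{1\un 2 3}$, while the contribution of $\alpha$ is governed by its left-to-right minima together with which entries of $\beta$ exceed them; the aim is to reproduce this, piece by piece, from the corresponding decomposition of $\phi_n(\sigma)$. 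A purely computational alternative is to skip the bijection and instead derive recursions for $P_n(t)=\sum_\sigma t^{pmp_{1\un 2 3}(\sigma)}$ and $Q_n(t)=\sum_\sigma t^{pmp_{1\un 3 2}(\sigma)}$ by conditioning on the positions of $1$ and $n$, using the left-to-right-minima generating function $\sum_{\pi\in S_k}t^{k-\mathrm{lrmin}(\pi)}=\prod_{i=0}^{k-1}(1+it)$ of $S_k$ together with the reverse and complement symmetries, and then checking that the two recursions coincide.

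The main obstacle is the asymmetry between the two conditions. The $1\un 2 3$-condition is a conjunction of an independent ``there is a smaller entry on the left'' and ``there is a larger entry on the right,'' so it splits cleanly across the decomposition $\sigma=\alpha\,1\,\beta$; the $1\un 3 2$-condition, by contrast, couples the two sides through the running minimum $m_\ell$, since the admissible right-hand witnesses depend on how small the prefix already is. Consequently the induction must be strengthened to keep track of the current running minimum in addition to the length --- i.e.\ one should run the recursion over permutations of an arbitrary finite interval of integers, recording which values have already been used --- and the real content of the proof is verifying that the two enriched recursions agree (equivalently, that $\phi_n$ is well defined and weight-preserving). The remaining steps are routine bookkeeping, supported by the $n\le 3$ base case and the Catalan-number check above.
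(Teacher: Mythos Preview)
Your proposal is a plan rather than a proof: you correctly unwind both statistics, spot the asymmetry that makes $1\un 3 2$ harder, and then defer the actual verification (``the real content of the proof is verifying that the two enriched recursions agree\ldots the remaining steps are routine bookkeeping''). As written, nothing has been verified, and the coupling you identify for $1\un 3 2$ is exactly the obstruction that makes the bookkeeping non-routine. In particular, your enrichment ``keep track of the current running minimum'' is not obviously sufficient: the failure set for $1\un 3 2$ at position $\ell$ depends on the entire set of values below $\sigma_\ell$ already placed, not just on the minimum, so the refined recursion you would need is not the one you describe.

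The paper sidesteps this difficulty with one observation you do not use: by the diagram symmetry (Lemma~\ref{sym}), $1\un 3 2$ is $pmp$-Wilf equivalent to $13\un 2$, and it is $\tau_2=13\un 2$ that is compared with $\tau_1=1\un 2 3$. The point is that in both $\tau_1$ and $\tau_2$ the smallest element lies strictly to the \emph{left} of the underlined position; this is what decouples the two sides. The paper then refines not only by the position $k$ of $1$ (as you propose) but also by the position $\ell$ of $2$, and shows that removing $1$ (when $\ell<k$) or $2$ (when $\ell>k$) leaves the remaining statistic unchanged. The resulting recursions for $P_{n,\tau_1,k}(x)$ and $P_{n,\tau_2,k}(x)$ are literally identical,
\[
P_{n,\tau,k}(x)=\sum_{\ell=1}^{k-1}P_{n-1,\tau,\ell}(x)+\bigl((n-k-1)x+1\bigr)P_{n-1,\tau,k}(x),
\]
and a bijection is read off directly from this. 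So the missing idea is twofold: pass to $13\un 2$ first, and then track the positions of both $1$ and $2$ rather than just $1$. Your left-to-right-minimum/right-to-left-maximum description of $pmp_{1\un 2 3}$ is correct and pleasant, but it does not by itself suggest the matching structure on the $1\un 3 2$ side.
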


In Section \ref{yoyo}, we prove some non-trivial equivalences of pairs of positional marked patterns of length 4 as well as providing numerical results for every patterns of length 4. In Section \ref{lenn}, we prove the following theorem, which gives an equivalence of pairs of positional marked patterns of arbitrary length.

\begin{restatable}{theorem}{lengthn}
\label{lengthn}
Let $n\ge 4$ and  $p_1,p_2,\ldots,p_{n-2}$ be a rearrangement of  $3,4,\ldots n$. Three positional marked patterns $P_1 = 1\un 2 p_1 \ldots p_{\ell-2}$, $P_2 = \un 2 1 p_1 \ldots p_{\ell-2}$ and $P_3 = 2 \un 1 p_1 \ldots p_{\ell-2}$ are $PMP$-Wilf-equivalent.
\end{restatable}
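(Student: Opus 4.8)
The plan is to establish the two equivalences $P_1 \sim P_2$ and $P_2 \sim P_3$ separately, via position-preserving bijections on $S_m$ for each $m$, and then conclude by transitivity. The key observation is that all three patterns share the common ``tail'' pattern $p_1 p_2 \ldots p_{n-2}$ occupying the last $n-2$ slots with values $\{3,4,\ldots,n\}$ in a fixed relative order; the only difference is in how the two smallest roles ($1$ and $2$) and the underline are arranged in the first two slots. So a $P_i$-match at position $\ell$ in $\sigma$ always consists of choosing the same kind of ``large block'' of $n-2$ entries realizing the tail, together with two entries below all of them realizing the roles of $1$ and $2$, with the underline sitting on one of these two small entries; only the order requirement on the first two entries (and which of them is underlined) changes. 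Thus I want a bijection that leaves the position set of matches fixed while toggling the order of the two relevant small entries.

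First I would handle $P_1 = 1\un2 p_1\ldots p_{n-2}$ versus $P_2 = \un21 p_1\ldots p_{n-2}$. Here the tail and the underline-on-the-``2''-role are the same; the difference is whether the ``1''-role entry comes before or after the ``2''-role entry. I would reduce, as in the proof of Theorem~\ref{threetwo} (the $1\un23 \sim 1\un32$ case) or Theorem~\ref{lengthn}-type reasoning, to analyzing, for a fixed entry $\sigma_\ell = b$ and a fixed choice of a valid ``large block'' $B$ (a set of $n-2$ positions to the right or spread appropriately, all with values exceeding some small entry), whether there exists a partner small entry $a < b$ placed to the left of $\ell$ (for $P_1$, where the ``$1$'' precedes the ``$\un2$'') versus to the right of $\ell$ (for $P_2$). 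The claim will be that, for a \emph{fixed} $\sigma$, position $\ell$ is a $P_1$-match iff it is a $P_2$-match: if $\sigma_\ell$ can play the ``$\un 2$'' role with some large block $B$ to its right, then among all entries smaller than $\sigma_\ell$ there is one either to the left or to the right, and one can argue that the existence of \emph{some} valid configuration is insensitive to this left/right choice because the large block $B$ lies entirely to the right of position $\ell$ and uses only the top $n-2$ values it needs, leaving enough small entries on both sides — or, where it is not insensitive, a swap of two values produces the other match without disturbing any other position's match status. This ``swap two values, preserve all match positions'' argument is the technical core and is exactly the style used in the length-3 proofs, so I would lift it.

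Next, for $P_2 = \un21p_1\ldots p_{n-2}$ versus $P_3 = 2\un1 p_1\ldots p_{n-2}$: now the two leading entries are in the \emph{same} order (the larger of the two small entries first, then the smaller), and only the underline moves from the first to the second of these two entries. So $\ell$ is a $P_3$-match iff $\sigma_\ell$ is the \emph{smaller} of a ``descending pair of small values'' that extends to the tail, while for $P_2$ it is the \emph{larger} of such a pair. I would set up an involution on $S_m$ — the natural candidate is a local value-swap: given a $P_2$-match at $\ell$ witnessed by partner position $j<\ell$ with $\sigma_j$ just above $\sigma_\ell$ among the two relevant entries, swap $\sigma_j \leftrightarrow \sigma_\ell$; this turns the $P_2$-match at $\ell$ into a $P_3$-match at $\ell$, and one must check (i) it is well-defined independent of the witness, (ii) it is an involution, and (iii) it preserves $pmp$ at every position simultaneously, not just at $\ell$. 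Alternatively, a cleaner route is to find a single global bijection $\phi_m \colon S_m \to S_m$ with $pmp_{P_2}(\sigma) = pmp_{P_3}(\phi_m(\sigma))$ built from the structure of the tail, perhaps by induction on $m$ stripping off the largest value.

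I expect the main obstacle to be exactly condition (iii): verifying that the value-swap used to convert a $P_i$-match at $\ell$ into a $P_j$-match at $\ell$ does not accidentally create or destroy matches at other positions. The large common tail helps here — swapping two of the three \emph{smallest}-role values among a matched tuple changes very little in the relative order seen by other potential match positions — but the bookkeeping for arbitrary $\sigma \in S_m$ (as opposed to the $n=3$ toy cases) is where the real work lies, and I would organize it by classifying, for each other position $\ell'$, whether it sees positions $j$ and $\ell$ as part of its own witnessing large block, as part of its own small pair, or not at all, and checking each case. Once the two bijections are shown to preserve $pmp$ pointwise (or at least distribution-wise), composing them and invoking transitivity of $pmp$-Wilf equivalence finishes the proof.
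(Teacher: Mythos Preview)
Your proposal does not go through, and the gap is at the very first step. The claim that, for a fixed $\sigma$, a position $\ell$ is a $P_1$-match if and only if it is a $P_2$-match is simply false. Take $n=4$, $p_1p_2=34$, so $P_1=1\un234$ and $P_2=\un2134$, and $\sigma=1234\in S_4$. Then $\sigma$ has a $P_1$-match at position $2$ (witnessed by the whole word), but $\sigma$ contains no occurrence of $2134$ at all, so $pmp_{P_2}(\sigma)=0$. In particular $pmp_{P_1}(\sigma)\ne pmp_{P_2}(\sigma)$, so no position-preserving argument on a fixed $\sigma$ can work, and any bijection $S_m\to S_m$ realizing the equivalence must genuinely move permutations around. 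Your fallback ``swap two values'' map is never actually specified (which two values, chosen with respect to which witness?), and the obstacle you yourself flag as (iii)---that such a swap will create or destroy matches at other positions---is real and is not addressed. The analogy with the length-$3$ proofs is misleading: those proofs do \emph{not} proceed by local value swaps, but by building a global recursive bijection from identical recursions for refined generating functions; you have not set up any such recursion here.

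The paper's proof is structurally completely different. It fixes the tail $\tau=\mathrm{red}(p_1\ldots p_{n-2})$ and, for $\sigma\in S_m$, declares a cell $(i,j)$ of the permutation diagram \emph{dominant} if there is an occurrence of $\tau$ strictly to its upper right. The dominant cells form a Ferrers board, and the set $Q$ of non-dominant $\times$'s determines that board. Stratifying $S_m$ by $Q$, one gets a bijection between $S_m^Q$ and fillings of the residual Ferrers shape $\lambda(Q)$ under which $pmp_{P_1}$ corresponds to $pmp_{1\un2}$ and $pmp_{P_2}$ to $pmp_{\un21}$ on fillings. The problem is thus reduced to showing that $pmp_{1\un2}$ and $pmp_{\un21}$ are equidistributed over fillings of an arbitrary Ferrers shape, which is a one-line induction on rows giving the explicit product $\prod_i\bigl(1+(\lambda_i-(k-i+1))x\bigr)$. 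The equivalence $P_1\sim P_3$ then follows by reflecting across the diagonal and reapplying the $P_1\sim P_2$ argument. None of this machinery---the dominant/non-dominant decomposition, the reduction to Ferrers fillings, or the length-$2$ computation---appears in your sketch, and it is exactly what replaces the ``bookkeeping'' you were unable to carry out.
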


In Section \ref{future}, we discuss further research possibilities, as well as precise connection between positional marked patterns and quadrant marked mesh patterns.

\section{Definition}\label{chapdef}

\begin{definition}
Let $S_k^*$ denote a set of permutations of $[k]$ with one of the elements underlined. Given any $\tau \in S_k^*$, let $\pi(\tau)$ denote an element in $S_k$ obtained from $\tau$ by removing the underline, and let $u(\tau)$ be the position of the underlined element in $\tau$. We shall call an element in $S_k^*$ a positional marked pattern (PMP).
\end{definition}

For example, $\tau = 1 \un{4} 3 2$ is an element in $S_4^*$. In this case, $\pi(\tau) = 1432$ and $u(\tau) = 2$.

\begin{definition}
Given a word $w$ where letters are taken from $\Z_{\ge 0}$, a reduction of $w$, denoted by $red(w)$, is the word obtained from $w$ by replacing the $i$-th smallest letter by $i$. In particular, if $w$ is a word of length $k$ with distinct letters, then $red(w) \in S_k$.
\end{definition}

For example, if $w= 25725$, then $red(w) = 12312$. Here, we are ready to define a statistic on $S_n$ that we mainly focus on in this paper. 

\begin{definition}
Given $\sigma = \sigma_1 \sigma_2 \ldots \sigma_n \in S_n$ and $\tau \in S_k^*$, we say that $\sigma$ has a $\tau$-match at position $\ell$ if there is a subsequence $\sigma_{i_1} \sigma_{i_2} \ldots \sigma_{i_k}$ in $\sigma$ such that

\begin{enumerate}
\item $\ell = i_{u(\tau)}$
\item $red(\sigma_{i_1} \sigma_{i_2} \ldots \sigma_{i_k}) = \pi(\tau)$.
\end{enumerate}
 
In other words, $\sigma$ contains the pattern $\pi(\tau)$ in such a way that $\ell$-th element in $\sigma$ plays the role of the underlined elements in $\tau$. Let $pmp_\tau(\sigma)$ denote the number of positions $\ell$ such that $\sigma$ has a $\tau$-match at position $\ell$.

\end{definition}

For example, if $\sigma = 26481573$ and $\tau = 1\un 4 3 2$. Then $\sigma$ has a $\tau$-match at positions 2 and 4, as we find subsequences $2653$ and $2853$ respectively. Thus, $pmp_\tau(\sigma) = 2$. We are interested in the generating function $P_{n,\tau}(x) = \sum_{\sigma \in S_n} x^{pmp_{\tau}(\sigma)}$. We say that two positional marked patterns $\tau_1$ and $\tau_2$ are $PMP$-Wilf-equivalent if $P_{n,\tau_1}(x) = P_{n,\tau_2}(x)$ for all $n$. Note that, by looking at the constant terms of generating functions, if $\tau_1$ and $\tau_2$ are $PMP$-Wilf-equivalent, then $\pi(\tau_1)$ and $\pi(\tau_2) $ are Wilf-equivalent. Thus, one might think of $pmp$-Wilf-equivalent as a stronger version of Wilf-equivalent. In this paper, we classify equivalence classes of $S_3^*$ and $S_4^*$.

We associate a positional marked pattern with a permutation matrix-like diagram. Given any $\tau \in S_k^*$, the diagram associated to $\tau$ is a $k$ by $k$ array with the following filling: For the cell at $i$-th row and $j$-th column (i) the cell is filled with $\circ$ if $\tau_i = j$ and $u(\tau) = i$, (ii) the cell is filled with $ \times$ if $\tau_i = j$ and $u(\tau) \neq i$, or (iii) the cell is empty otherwise. By convention, we count rows and columns of an array from left to right and from bottom to top. For example, if $\tau = 1\un4 3 2$, then the corresponding diagram is

\begin{center} \young(\hfil \circ \hfil \hfil  ,\hfil \hfil \times \hfil,\hfil \hfil \hfil \times,\times \hfil \hfil \hfil ) .\end{center}

Similarly, we associate $\sigma \in S_n$ with an $n$ by $n$ diagram with the cell at $i$-th row and $j$-column is (i) filled with $\times$ if $\sigma_i = j$ and (ii) empty otherwise. For example, the diagram corresponding to $\pi = 26481573$ is

\begin{center} \young(\hfil \hfil \hfil \times \hfil \hfil \hfil \hfil,\hfil \hfil \hfil \hfil \hfil \hfil \times \hfil,\hfil \times \hfil \hfil \hfil \hfil \hfil \hfil,\hfil \hfil \hfil \hfil \hfil \times \hfil \hfil,\hfil \hfil \times \hfil \hfil \hfil \hfil \hfil,\hfil \hfil \hfil \hfil \hfil \hfil \hfil \times,\times \hfil \hfil \hfil \hfil \hfil \hfil \hfil,\hfil \hfil \hfil \hfil \times \hfil \hfil \hfil) . \end{center}

We can visualize  matching of $PMP$ using diagrams. Given a permutation $\sigma$ and a $PMP$ $\tau$, $\sigma$ has a $\tau$-match at position $i$ if when replacing $\times$ in the $i$-th column of the diagram of $\sigma$ by $\circ$, then it contains a subdiagram $\tau$. For example, $\sigma = 26481573$ has a $\tau$-match at position 2 and 4 by looking at subdiagrams marked in red below

\begin{center} \young(\hfil \hfil \hfil \times \hfil \hfil \hfil \hfil,\hfil \hfil \hfil \hfil \hfil \hfil \times \hfil,\hfil \redo \hfil \hfil \hfil \hfil \hfil \hfil,\hfil \hfil \hfil \hfil \hfil \redx \hfil \hfil,\hfil \hfil \times \hfil \hfil \hfil \hfil \hfil,\hfil \hfil \hfil \hfil \hfil \hfil \hfil \redx,\redx \hfil \hfil \hfil \hfil \hfil \hfil \hfil,\hfil \hfil \hfil \hfil \times \hfil \hfil \hfil) \hspace{.5cm} \young(\hfil \hfil \hfil \redo \hfil \hfil \hfil \hfil,\hfil \hfil \hfil \hfil \hfil \hfil \times \hfil,\hfil \times \hfil \hfil \hfil \hfil \hfil \hfil,\hfil \hfil \hfil \hfil \hfil \redx \hfil \hfil,\hfil \hfil \times \hfil \hfil \hfil \hfil \hfil,\hfil \hfil \hfil \hfil \hfil \hfil \hfil \redx,\redx \hfil \hfil \hfil \hfil \hfil \hfil \hfil,\hfil \hfil \hfil \hfil \times \hfil \hfil \hfil) .\end{center}

With diagrams, we prove equivalences of $PMP$ by symmetry. Given two $PMP$s $\tau_1,\tau_2$ such that the diagram of $\tau_2$ can be obtained from $\tau_1$ by applying a series of rotations and reflections to the diagram of $\tau_1$, then we can construct a map $\theta: S_n \rightarrow S_n$ by applying the same series of rotations and reflections to elements in $S_n$. The map will obviously have the property that $pmp_{\tau_1}(\sigma) = pmp_{\tau_2} (\theta(\sigma))$ for all $\sigma \in S_n$, and so $\tau_1$ and $\tau_2$ are $PMP$-Wilf-equivalent. We proved the following lemma.

\begin{lemma} \label{sym} Given $\tau_1,\tau_2 \in S_k^*$, such that $\tau_2$ can be obtained by applying a series of rotations and reflections to $\tau_1$. Then $\tau_1$ and $\tau_2$ are $PMP$-Wilf-equivalent.
\end{lemma}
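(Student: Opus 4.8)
The plan is to make precise the informal argument sketched just above the statement. A series of reflections and rotations of a $k \times k$ array determines an element $g$ of the dihedral group $D_4$ of symmetries of a square, and that same $g$ acts on $n \times n$ permutation-type diagrams for every $n$: reflecting in the horizontal axis, the vertical axis, and the main diagonal realizes the classical \emph{complement}, \emph{reverse}, and \emph{inverse} operations on $S_n$, respectively, and every element of $D_4$ is a composition of these three. Since a square symmetry permutes cells while preserving ``exactly one filled cell in each row and each column'', it sends the diagram of a permutation to the diagram of a permutation and the diagram of a pmp to the diagram of a pmp (the single $\circ$ just moves to another cell). Moreover $g \mapsto \theta_g$, where $\theta_g : S_n \to S_n$ is the induced map on permutations, is a group homomorphism from $D_4$ into the symmetric group on $S_n$; in particular every $\theta_g$ is a bijection, with inverse $\theta_{g^{-1}}$. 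Now fix any $g \in D_4$ carrying the diagram of $\tau_1$ to the diagram of $\tau_2$ and set $\theta = \theta_g$.

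First I would record the compatibility of this action with passing to sub-arrays. If one selects a set of rows and a set of columns of the diagram of $\sigma$ and erases all other cells, then applying $g$ to the resulting sub-array gives the same thing as first applying $g$ to the whole diagram of $\sigma$ and then selecting the images of those rows and columns. Consequently $g$ intertwines the operation ``read off the permutation pattern from a chosen $k$-subset of rows and columns'' with the corresponding complement/reverse/inverse operation on patterns; in particular it carries the $\circ$-cell of the diagram of $\tau_1$ to the $\circ$-cell of the diagram of $\tau_2$, so $u(\tau_1)$ and $u(\tau_2)$ correspond under the column permutation induced by $g$ on the $k \times k$ array.

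Next I would establish the key position-tracking statement: $\sigma$ has a $\tau_1$-match at position $\ell$ if and only if $\theta(\sigma)$ has a $\tau_2$-match at position $\ell'$, where $\ell \mapsto \ell'$ is the permutation of $\{1, \dots, n\}$ that $g$ induces on the columns of the $n \times n$ diagram of $\sigma$ --- explicitly, complement fixes each column, reverse sends column $j$ to column $n+1-j$, and inverse turns a column index $\ell$ into the row index $\sigma_\ell$. Indeed, by the diagrammatic description of matches recalled in the text, a $\tau_1$-match of $\sigma$ at position $\ell$ is a choice of $k$ rows and $k$ columns of the diagram of $\sigma$, with the marked column equal to $\ell$, whose selected sub-array is the diagram of $\tau_1$; applying $g$ to that sub-array and to the diagram of $\tau_1$, and invoking the previous paragraph, produces a $\tau_2$-match of $\theta(\sigma)$ whose marked column is the $g$-image $\ell'$ of $\ell$, and the construction is manifestly reversible. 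Hence $pmp_{\tau_1}(\sigma) = pmp_{\tau_2}(\theta(\sigma))$ for every $\sigma \in S_n$, and because $\theta$ is a bijection of $S_n$ we conclude
\[
P_{n,\tau_1}(x) = \sum_{\sigma \in S_n} x^{pmp_{\tau_1}(\sigma)} = \sum_{\sigma \in S_n} x^{pmp_{\tau_2}(\theta(\sigma))} = P_{n,\tau_2}(x),
\]
which is the asserted $pmp$-Wilf equivalence.

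The content is entirely bookkeeping, and the step I would be most careful about is exactly that bookkeeping: tracking, through a composition of complement, reverse, and inverse, how the marked column of the pattern moves and how the columns of the ambient $n \times n$ diagram are permuted (the inverse operation being the one that genuinely depends on $\sigma$). Everything else amounts to checking that ``contains the subdiagram $\tau$'' commutes with the $D_4$-action, which is immediate from the sub-array compatibility above. I would also note that the argument is insensitive to which symmetry $g$ taking the one diagram to the other is chosen, so in particular there is no need to single out a canonical word in the generators.
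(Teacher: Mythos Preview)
Your proposal is correct and follows exactly the approach the paper uses: the paper's entire proof is the short paragraph immediately preceding the lemma, which simply asserts that applying the same series of rotations and reflections to elements of $S_n$ yields a bijection $\theta$ with $pmp_{\tau_1}(\sigma) = pmp_{\tau_2}(\theta(\sigma))$, and you have carefully unpacked that assertion by identifying the $D_4$-action, naming the generating symmetries (complement, reverse, inverse), and tracking how the marked column moves under each.
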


Lemma \ref{sym} reduces the problem tremendously. There are $3!\cdot 3 = 18$ $PMP$s of length 3. However, with Lemma \ref{sym}, there are at most 4 equivalence classes, which are represented by $\un1 2 3, 1 \un 2 3, \un 1 3 2, 1 \un 3 2$. Their diagrams are shown in Figure \ref{4replen3}.

\begin{figure}[h]
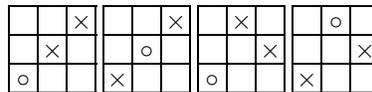

\begin{center}
\young(\hfil \hfil \times,\hfil \times \hfil,\circ \hfil \hfil) \young(\hfil \hfil \times,\hfil \circ \hfil,\times \hfil \hfil) \young(\hfil \times \hfil,\hfil \hfil \times,\circ \hfil \hfil) \young(\hfil \circ \hfil,\hfil \hfil \times,\times \hfil \hfil)
\end{center}
\caption{Diagram shows 4 representatives of positional marked patterns of length 3 modulo geometrical equivalence.}
\label{4replen3}
\end{figure}

\section{Equivalence classes of $S_3^*$}\label{len3}

\subsection{Equivalence of $\un 1 2 3$ and $\un 1 3 2$}

In this subsection, we restate and prove one of the theorems mentioned in Section \ref{intro}.

\threeone*

For our convenience, we let $\tau_1 = \un1 2 3$ and $\tau_2 = \un1 3 2$. We will first prove by showing that generating functions corresponding to those patterns satisfy the same recursive formula. We later will construct a bijection from $S_n$ to itself that maps $pmp_{\tau_1}$ to $pmp_{\tau_2}$, that is, a map $\theta$ such that $pmp_{\tau_1}(\sigma) = pmp_{\tau_2}(\theta(\sigma))$.

Given any permutation $\sigma = \sigma_1\sigma_2\ldots \sigma_n \in S_n$. We say that $\sigma$ has an {\em ascent} at position $i$ if $\sigma_i<\sigma_{i+1}$. We will look at the position where the last ascent occurs. In particular, either $\sigma$  has no ascent or there is $k$ such that $\sigma_k < \sigma_{k+1} > \sigma_{k+2} > \ldots > \sigma_{n}$. In that case, we say that the last ascent of $\sigma$ is at position $k$. Let $P_{n,\tau_1,k}(x) = \sum x^{pmp_{\tau_1}(\sigma)}$ where the sum is over all permutations in $S_n$ with the last ascent at position $k$. Then, we have $P_{n,\tau_1}(x) = 1+ \sum_{k=1}^{n-1} P_{n,\tau_1,k}(x)$.

Given any permutation $\sigma = \sigma_1 \sigma_2 \ldots \sigma_n \in S_n$. We say that $\sigma$ has an {\em descent} at position $i$ if $\sigma_i>\sigma_{i+1}$. Similar to $\tau_1$, we will look at the position where the last descent occurs for $\tau_2$. Either $\sigma$ has no descent, or there is $k$ such that $\sigma_k > \sigma_{k+1} < \sigma_{k+2} < \ldots < \sigma_n$. We say that the last descent of $\sigma$ is at position $k$. Let $P_{n,\tau_2,k}(x) = \sum x^{pmp_{\tau_2}(\sigma)}$ where the sum is over all permutations in $S_n$ with the last descent at position $k$. Then, we have $P_{n,\tau_2}(x) = 1+ \sum_{k=1}^{n-1} P_{n,\tau_2,k}(x)$.

First, we derive a recursive formula for $P_{n,\tau_1,k}(x)$.

\begin{lemma} $P_{n,\tau_1,k}(x)$ satisfies the following recursive formula:

$$P_{n,\tau_1,k}(x) = (k-1)xP_{n-1,\tau_1,k-1}(x) + 1+ \sum_{\ell=1}^{k}P_{n-1,\tau_1,\ell}(x)$$

where $P_{n,\tau_1,0}(x) = P_{n,\tau_1,n}(x) = 0$ by convention.
\end{lemma}

\begin{proof}
We derive a recursive formula of $P_{n,\tau_1,k}(x)$ by looking at position of $1$ in $\sigma$ where the last ascent is at position $k$. So, $\sigma$ has the following form:

$$\sigma = \sigma_1 ~ \sigma_2 \ldots \sigma_{k-1} ~ \sigma_k  <  \sigma_{k+1} > \sigma_{k+2} > \ldots > \sigma_n .$$

 Let $t$ be the position of $1$. We have three cases:
\begin{enumerate}
\item $1\le t \le k-1$. In this case, $1,\sigma_k,\sigma_{k+1}$ form the pattern $\tau_1$. Thus, $\sigma$ has a $\tau_1$-match at position $t$. Moreover, 1 does not influence $\tau_1$-match at other positions. Thus, we can remove 1 from $\sigma$ and decrease other elements by 1. The last ascent will be at position $k-1$. Therefore,
this case contributes $x(k-1)P_{n-1,\tau_1,k-1}(x)$.
\item $t = k$. In this case, $\sigma$ does not have a $\tau_1$-match at position $k$. We can remove $1$ from $\sigma$ and decrease other elements by 1. Either the remaining permutation will has no ascent, or the last ascent appears at some position between 1 and $k-1$. Thus, this case contributes $1+\sum_{\ell=1}^{k-1}P_{n-1,\tau_1,\ell}(x)$.
\item $t = n$. In this case, $\sigma$ does not have a $\tau_1$-match at position $n$. We can remove $1$ from $\sigma$ and decrease other elements by 1. The remaining permutation will have the last ascent at position $k$. Thus, this case contributes $P_{n-1,\tau_1,k}(x)$.
\end{enumerate}

In total, we have

\begin{eqnarray*}
P_{n,\tau_1,k}(x) &=& (k-1)xP_{n-1,\tau_1,k-1}(x) + 1+ \left ( \sum_{\ell=1}^{k-1} P_{n-1,\tau_1,\ell}(x) \right )+ P_{n-1,\tau_1,k}(x)\\
&=& (k-1)xP_{n-1,\tau_1,k-1}(x) + 1+ \sum_{\ell=1}^{k}P_{n-1,\tau_1,\ell}(x) .
\end{eqnarray*}

Note that, the first case does not exist when $k=1$, but the formula is correct since $P_{n-1,\tau_1,0}(x) = 0$. Also, the third case does not exist when $k=n-1$. However, the formula is also correct since $P_{n,\tau_1,n}(x) = 0$.

\end{proof}

Here, we prove a similar result for $\tau_2$.

\begin{lemma} Let $\tau_2 = \un 1 3 2$ and let $P_{n,\tau_2,k}(x) = \sum x^{pmp_{\tau_2}(\sigma)}$ where the sum is over all $\sigma \in S_n$ with the last descent of $\sigma$  at position $k$. Then, $P_{n,\tau_2,k}(x)$ satisfies the following recursive formula:

$$P_{n,\tau_2,k}(x) = (k-1)xP_{n-1,\tau_2,k-1}(x) + 1+ \sum_{\ell=1}^{k}P_{n-1,\tau_2,\ell}(x)$$

where $P_{n,\tau_2,0}(x) = P_{n,\tau_2,n}(x) = 0$ by convention.
\end{lemma}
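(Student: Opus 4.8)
The plan is to run the argument of the previous lemma essentially verbatim, replacing ``last ascent'' by ``last descent'' and again organizing the case analysis around the position $t$ of the letter $1$ in $\sigma$. So I would fix $\sigma = \sigma_1 \cdots \sigma_n \in S_n$ with last descent at position $k$, i.e.\ $\sigma_k > \sigma_{k+1} < \sigma_{k+2} < \cdots < \sigma_n$, and let $t$ be the position of $1$. The first observation is that this time there are only \emph{two} possibilities for $t$: since $\sigma_k > \sigma_{k+1} \ge 1$ forces $\sigma_k \ge 2$, the letter $1$ cannot be $\sigma_k$, and since $\sigma_{k+1} < \cdots < \sigma_n$ is increasing, if $1$ lies in this suffix it must be its first entry $\sigma_{k+1}$. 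Hence either $1 \le t \le k-1$ or $t = k+1$; in particular the three-case split of the $\tau_1$ lemma collapses to two cases here.

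In the case $1 \le t \le k-1$, the triple $\sigma_t,\sigma_k,\sigma_{k+1}$ has reduction $132$ (because $\sigma_t = 1 < \sigma_{k+1} < \sigma_k$ and $t < k < k+1$), so $\sigma$ has a $\tau_2$-match at position $t$; and because $1$ is globally minimal, any occurrence of $132$ involving $1$ must use it as the underlined entry, so deleting $1$ and reducing removes exactly the match at $t$ and leaves every other match intact, while the last descent drops to position $k-1$. Summing over the $k-1$ choices of $t$ and over all $\sigma' \in S_{n-1}$ with last descent at $k-1$ gives the contribution $(k-1)x\,P_{n-1,\tau_2,k-1}(x)$. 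In the case $t = k+1$, position $k+1$ is never a $\tau_2$-match, since a match there would require an inversion strictly to the right of $k+1$ and $\sigma_{k+2} < \cdots < \sigma_n$ is increasing; deleting $1$ and reducing leaves positions $k,k+1,\ldots,n-1$ equal to the reductions of $\sigma_k,\sigma_{k+2},\ldots,\sigma_n$, whose only possible descent is at position $k$, so the resulting $\sigma' \in S_{n-1}$ has last descent at some $j \le k$ or no descent at all. I would then check that inserting $1$ immediately after position $k$ inverts this: it creates a descent at $k$ (from $\sigma_k$ down to $1$) and keeps the tail increasing, hence forces the last descent of the new permutation to be exactly $k$, regardless of where the last descent of $\sigma'$ was. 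Since $pmp_{\tau_2}$ is unchanged under this deletion, this case contributes $1 + \sum_{\ell=1}^{k} P_{n-1,\tau_2,\ell}(x)$.

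Adding the two contributions yields precisely $(k-1)x\,P_{n-1,\tau_2,k-1}(x) + 1 + \sum_{\ell=1}^{k} P_{n-1,\tau_2,\ell}(x)$, and the conventions $P_{n,\tau_2,0}(x) = P_{n,\tau_2,n}(x) = 0$ absorb the degenerate endpoints $k=1$ (the first case is empty) and $k = n-1$ (the increasing tail is the single entry $\sigma_n$), exactly as in the $\tau_1$ lemma. The step I expect to require the most care is the bijection in the case $t = k+1$: one must verify that inserting $1$ right after position $k$ of an arbitrary $\sigma' \in S_{n-1}$ --- whether $\sigma'$ is increasing, has its last descent before $k$, or has its last descent exactly at $k$ --- always lands in the set of $\sigma \in S_n$ with last descent exactly $k$ and $\sigma_{k+1} = 1$, and that this map is inverse to deletion of $\sigma_{k+1} = 1$. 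Everything else is a routine transcription of the previous proof.
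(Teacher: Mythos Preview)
Your proposal is correct and follows essentially the same approach as the paper: both argue by casing on the position $t$ of the letter $1$, observe that only the two cases $1\le t\le k-1$ and $t=k+1$ occur, and compute the same contributions $(k-1)x\,P_{n-1,\tau_2,k-1}(x)$ and $1+\sum_{\ell=1}^{k}P_{n-1,\tau_2,\ell}(x)$ via the delete-$1$-and-reduce map. Your write-up is in fact more thorough than the paper's on two points---why $t\notin\{k,k+2,\ldots,n\}$ is impossible, and why the insertion of $1$ after position $k$ is a genuine inverse in the second case---so nothing further is needed.
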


\begin{proof}

We use the same strategy as in the case for $\tau_1$. We derive a recursive formula for $P_{n,\tau_2,k}(x)$ by looking at the position of $1$ in $\sigma$ where the last descent is at position $k$. So, $\sigma$ has the following form:

$$\sigma = \sigma_1 ~ \sigma_2 \ldots \sigma_{k-1} ~ \sigma_k  >  \sigma_{k+1} < \sigma_{k+2} < \ldots < \sigma_n.$$

 Let $t$ be the position of $1$. We have 2 cases:
\begin{enumerate}
\item $1\le t \le k-1$. In this case, $1,\sigma_k,\sigma_{k+1}$ form the pattern $\tau_2$. Thus, $\sigma$ has a $\tau_2$-match at postion $t$. Moreover, 1 does not influence $\tau_2$-match at other positions. Thus, we can remove 1 from $\sigma$ and decrease other elements by 1. The last descent will be at position $k-1$. Therefore,
this case contributes $x(k-1)P_{n-1,\tau_2,k-1}(x)$.

\item $t = k+1$. In this case, $\sigma$ does not have a $\tau$-match at position $k+1$. We can remove $1$ from $\sigma$ and decrease other elements by 1. The remaining permutation will either has no descent, or the last ascent appears at some position between 1 and $k$. Thus, this case contributes $1+\sum_{\ell=1}^{k}P_{n-1,\tau_1,\ell}(x)$.
\end{enumerate}

In total, we have

$$
P_{n,\tau_2,k}(x) =(k-1)xP_{n-1,\tau_2,k-1}(x) + 1+ \sum_{\ell=1}^{k} P_{n-1,\tau_2,\ell}(x).
$$

Note that, the first case does not exist when $k=1$, but the formula is correct since $P_{n-1,\tau_2,0}(x) = 0$. Also, for the second case, if $k = n-1$, the last descent cannot be at position $k = n-1$. However, the formula is still correct since $P_{n-1,\tau_2,n}(x) = 0$.
\end{proof}

One can check that $P_{2,\tau_1,1}(x) = 1 = P_{2,\tau_2,1}(x)$. Since $P_{n,\tau_1,k}(x)$ and $P_{n,\tau_2,k}(x)$ satisfy the same recursive formula and have the same initial values, we prove that $P_{n,\tau_1,k}(x) = P_{n,\tau_2,k}(x)$ for all $n,k$ such that $1 \le k \le n-1$, and hence $P_{n,\tau_1}(x) = P_{n,\tau_2}(x)$ for all $n \ge 1$. Therefore, we prove Theorem \ref{threeone}.

To conclude this section, we give a bijection $\theta: S_n \rightarrow S_n$ such that $pmp_{\tau_1}(\sigma) = pmp_{\tau_2}(\theta(\sigma))$. The bijection is constructed based on the recursive formula. We start with $\theta(1) = 1$, $\theta(12) = 21$ and $\theta(21) = 12$. In general, $\theta$ will have following properties:

\begin{enumerate}
\item $\sigma$ has no ascent if and only if $\theta(\sigma)$ has no descent.
\item The last ascent of $\sigma$ is at the same position as the last descent of $\theta(\sigma)$.
\item $pmp_{\tau_1}(\sigma) = pmp_{\tau_2}(\theta(\sigma))$.
\end{enumerate}

By observation, $\theta$ satisfies the properties for $S_1$ and $S_2$. For $S_{n+1}$ where $n \ge 2$, we define the map recursively. Given $\sigma \in S_{n}$, we have $\sigma' =  \theta(\sigma) \in S_{n}$, where the last ascent of $\sigma$ is at the same position as the last descent of $\sigma'$. Let $k$ be the position. We decompose $\sigma$ and $\sigma'$ at $k$.

$$\sigma = \sigma_1 \sigma_2 \ldots \sigma_k \hspace{1cm} \sigma_{k+1} \ldots \sigma_n$$
$$\sigma' = \sigma'_1 \sigma'_2 \ldots \sigma'_k \hspace{1cm} \sigma'_{k+1} \ldots \sigma'_n$$

If $\sigma$ has no ascent, then $\sigma'$ has no descent, we decompose $\sigma$ and $\sigma'$ by having the ``first part'' empty, or, equivalently, set $k=0$.

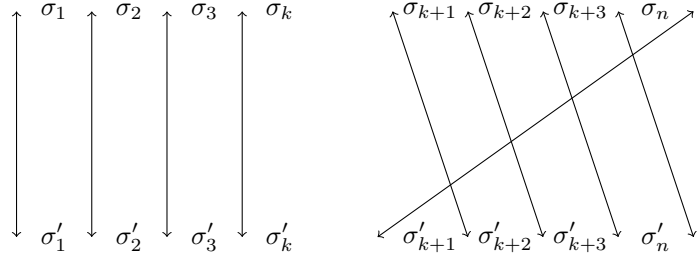
\begin{figure}[h]
\begin{center}
\begin{tikzpicture}
\node at (1,4) {$\sigma_1$};
\node at (2,4) {$\sigma_2$};
\node at (3,4) {$\sigma_3$};
\node at (4,4) {$\sigma_k$};
\node at (6,4) {$\sigma_{k+1}$};
\node at (7,4) {$\sigma_{k+2}$};
\node at (8,4) {$\sigma_{k+3}$};
\node at (9,4) {$\sigma_{n}$};

\node at (1,1) {$\sigma'_1$};
\node at (2,1) {$\sigma'_2$};
\node at (3,1) {$\sigma'_3$};
\node at (4,1) {$\sigma'_k$};
\node at (6,1) {$\sigma'_{k+1}$};
\node at (7,1) {$\sigma'_{k+2}$};
\node at (8,1) {$\sigma'_{k+3}$};
\node at (9,1) {$\sigma'_{n}$};

\draw[<->] (.5,1) -- (.5,4);
\draw[<->] (1.5,1) -- (1.5,4);
\draw[<->] (2.5,1) -- (2.5,4);
\draw[<->] (3.5,1) -- (3.5,4);

\draw[<->] (6.5,1) -- (5.5,4);
\draw[<->] (7.5,1) -- (6.5,4);
\draw[<->] (8.5,1) -- (7.5,4);
\draw[<->] (9.5,1) -- (8.5,4);
\draw[<->] (5.3,1) -- (9.5,4);

\end{tikzpicture}
\end{center}
\caption{Inserting positions  of 1 of the bijection $\theta$ that sends $pmp_{\un123}$ to $pmp_{\un132}$}
\label{bijection1}
\end{figure}

Here, we obtain an element $\hat \sigma \in S_{n+1}$ by increasing every element in $\sigma$ by 1 and inserting 1 at some position, then $\theta(\hat \sigma)$ will be obtained from increasing elements in $\sigma'$ by 1 and inserting 1 at some position based on the position of 1 in $\hat \sigma$. Suppose we insert 1 at position $r$ from the left in $\sigma$, then the position $r'$ of 1 inserted in $\sigma'$ is

$$r' = \left \{ \begin{array}{ll} r & r \le k \\ r+1 & k+1 \le r \le n \\ k+1 & r=n+1 \end{array} \right ..$$

The corresponding positions of 1 can be viewed in Figure \ref{bijection1}.

By observation, the position of the last ascent of $\hat \sigma$ is the same as the position of the last descent of $\hat \sigma$. Also, $pmp_{\tau_1}$ will increase by 1 if and only if 1 is inserted at the first $k$ positions, which is the same condition for $pmp_{\tau_2}$ to increase by 1. Thus, $\theta$ satisfies the conditions.

As an example, we start with $\theta(12) = 21$. Here, the diagram for inserting 1 looks like

\begin{center}
\begin{tikzpicture}
\node at (1,3) {$1$};
\node at (2,3) {$2$};

\node at (1,1) {$2$};
\node at (2,1) {$1$};

\draw[<->] (0.5,1) -- (0.5,3);
\draw[<->] (1.5,1) -- (2.5,3);
\draw[<->] (2.5,1) -- (1.5,3);
\end{tikzpicture} .
\end{center}

Suppose we insert 1 to 12 at the last position, so we get 231. According to the diagram, we should insert 1 to 21 at the second position, so we get 312. Thus, $\theta(231) =312$. Now the diagram looks like

\begin{center}
\begin{tikzpicture}
\node at (1,3) {$2$};
\node at (2,3) {$3$};
\node at (3,3) {$1$};

\node at (1,1) {$3$};
\node at (2,1) {$1$};
\node at (3,1) {$2$};

\draw[<->] (0.5,1) -- (0.5,3);
\draw[<->] (2.5,1) -- (1.5,3);
\draw[<->] (3.5,1) -- (2.5,3);
\draw[<->] (1.5,1) -- (3.5,3);
\end{tikzpicture} .
\end{center}

Here, we insert 1 to 231 at the first position, so we get 1342. We should also insert 1 to 312 at the first position, so we get 1423. Thus, $\theta(1342) = 1423$.

Below are enumerations of $P_{n,\un1 2 3}(x)$ for the first few $n$:

\begin{eqnarray*}
P_{1,\un1 2 3}(x) &=& {1} \\
P_{2,\un1 2 3}(x) &=&  {2}\\
P_{3,\un1 2 3}(x) &=&  {5 + x}\\
P_{4,\un1 2 3}(x) &=&  {14 + 8 x + 2 x^2}\\
P_{5,\un1 2 3}(x) &=&  {42 + 47 x + 25 x^2 + 6 x^3}\\
P_{6,\un1 2 3}(x) &=&  {132 + 244 x + 216 x^2 + 104 x^3 + 24 x^4}\\
P_{7,\un1 2 3}(x) &=&  {429 + 1186 x + 1568 x^2 + 1199 x^3 + 538 x^4 + 120 x^5}\\
P_{8,\un1 2 3}(x) &=&  {1430 + 5536 x + 10232 x^2 + 11264 x^3 + 7814 x^4 + 3324 x^5 +  720 x^6}\\
\end{eqnarray*}

\subsection{Special values for $P_{n,\un 1 2 3}(x)|_{x^k}$}

Even though we do not know a formula for $P_{n,\un 1 2 3 }(x)$ in general, we can still explain some of the coefficients. For example, $P_{n,\un 1 2 3}(x)|_{x^0} = C_n$, the $n$-th Catalan number. This is obvious since a permutation is $123$-avoiding if and only if it does not have a $\un 123$-match at any position. There are other coefficients that have a nice formula.

\begin{theorem}
For $n \ge3$, the degree of $P_{n,\un 1 2 3}(x)$ is $n-2$, and $P_{n,\un 1 2 3}(x)|_{x^{n-2}} = (n-2)!$.
\end{theorem}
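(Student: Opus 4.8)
The plan is to read off both assertions from the recursion for $P_{n,\tau_1,k}(x)$ proved in the preceding lemma (with $\tau_1 = \un 1 2 3$), namely
$$P_{n,\tau_1,k}(x) = (k-1)xP_{n-1,\tau_1,k-1}(x) + 1 + \sum_{\ell=1}^{k}P_{n-1,\tau_1,\ell}(x),$$
together with the conventions $P_{n,\tau_1,0} = P_{n,\tau_1,n} = 0$ and the identity $P_{n,\tau_1}(x) = 1 + \sum_{k=1}^{n-1}P_{n,\tau_1,k}(x)$. The key structural fact I would establish first, by induction on $n \ge 2$, is that $\deg P_{n,\tau_1,k}(x) = k-1$ for every $1 \le k \le n-1$, the base case being $P_{2,\tau_1,1}(x) = 1$. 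In the inductive step one treats three regimes separately: $k = 1$, where the $x$-term drops out and $P_{n,\tau_1,1}(x) = 1 + P_{n-1,\tau_1,1}(x)$ remains constant; $2 \le k \le n-2$, where the induction hypothesis makes both $(k-1)xP_{n-1,\tau_1,k-1}(x)$ and $1 + \sum_{\ell\le k}P_{n-1,\tau_1,\ell}(x)$ have degree exactly $k-1$; and $k = n-1$, where $(n-2)xP_{n-1,\tau_1,n-2}(x)$ has degree $n-2$, strictly beating the degree $\le n-3$ of the remaining summands (recall $P_{n-1,\tau_1,n-1} = 0$). Crucially, every $P_{n,\tau_1,k}(x)$ has nonnegative coefficients and the recursion combines them with nonnegative weights, so no leading terms can cancel; this is exactly what upgrades the easy ``$\le$'' degree bounds to equalities.

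Given this, the degree claim is immediate: since $\deg P_{n,\tau_1,k}(x) = k-1$ is strictly increasing in $k$, in $P_{n,\tau_1}(x) = 1 + \sum_{k=1}^{n-1}P_{n,\tau_1,k}(x)$ only the $k = n-1$ term reaches the top degree $n-2$, so $\deg P_{n,\un 1 2 3}(x) = n-2$ and, again by positivity (no cancellation), $P_{n,\un 1 2 3}(x)\big|_{x^{n-2}} = P_{n,\tau_1,n-1}(x)\big|_{x^{n-2}}$. For the leading coefficient I would set $a_n := P_{n,\tau_1,n-1}(x)\big|_{x^{n-2}}$ and extract it from the recursion: writing $P_{n,\tau_1,n-1}(x) = (n-2)xP_{n-1,\tau_1,n-2}(x) + R_n(x)$ with $\deg R_n \le n-3$, comparison of the coefficients of $x^{n-2}$ yields the one-term recurrence $a_n = (n-2)\,a_{n-1}$ for $n \ge 4$, with $a_3 = 1$ coming from $P_{3,\tau_1,2}(x) = x + 2$. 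This telescopes to $a_n = (n-2)!$, which is the asserted value (and one checks $a_4 = 2,\ a_5 = 6,\ a_8 = 720$ against the displayed generating functions).

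I do not expect a genuine obstacle here; the only delicate points are handling the three boundary regimes of the induction cleanly ($k=1$, $k=n-1$, and verifying the small cases $n=2,3$ fit the pattern) and the repeated, essential appeal to positivity of the coefficients to forbid cancellation of leading terms. If a more combinatorial flavor is desired, one can instead unwind the recursion to describe the $(n-2)!$ optimal permutations explicitly: they are exactly those obtained from $123$ by iteratively incrementing all entries by $1$ and inserting a new smallest entry into one of the first $j-1$ slots of the current length-$j$ word; equivalently, they are the $\sigma \in S_n$ for which positions $n-1$ and $n$ are the only positions failing to start an increasing subsequence of length $3$. However, the recursive argument above is shorter and I would present it as the proof.
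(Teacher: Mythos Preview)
Your argument is correct but takes a genuinely different route from the paper's. The paper gives a direct combinatorial characterization: it first notes that no $\sigma$ can have a $\un{1}23$-match at position $n-1$ or $n$, so $pmp_{\un{1}23}(\sigma)\le n-2$; it then shows that $pmp_{\un{1}23}(\sigma)=n-2$ if and only if $\sigma_{n-1}=n-1$ and $\sigma_n=n$ (the point being that $n-1$ and $n$ can never start a $123$ pattern, so their positions must be $\{n-1,n\}$, and then $n-2$ can only start a $123$ via the subsequence $n-2,\,n-1,\,n$, forcing the order), and simply counts the $(n-2)!$ ways to fill the first $n-2$ slots. You instead reuse the recursion for $P_{n,\tau_1,k}(x)$: you track $\deg P_{n,\tau_1,k}=k-1$ by induction on $n$, invoking nonnegativity of coefficients to rule out cancellation, and then read off the leading coefficient from the one-term recurrence $a_n=(n-2)a_{n-1}$. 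Your approach is economical in that it recycles machinery already in place and never needs to identify the extremal permutations; the paper's approach is more self-contained (it does not rely on the recursion lemma) and, more importantly, generalizes immediately to arbitrary $\tau\in S_k^*$, which is exactly what the paper does in the theorem that follows. Your closing recursive description of the extremal permutations is equivalent to the paper's static one, $\sigma_{n-1}=n-1$ and $\sigma_n=n$.
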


\begin{proof}
For any $\sigma \in S_n$, it is clear that $\sigma$ does not have a $\un123$-match at position $n$ or $n-1$. Thus, $pmp_{\un 123 }(\sigma) \le n-2$. For any $\sigma \in S_n$, we claim that $pmp_{\un123} (\sigma) = n-2$ if and only if $\sigma_{n-1} = n-1$ and $\sigma_{n} = n$. The converse obviously true since $\sigma$ would have a $\un123$-match at position $i$ for $1\le i \le n-2$. Suppose that $pmp_{\un123}(\sigma) = n-2$. Let $p_i$ be the position of $i$ in $\sigma$. That is $\sigma_{p_i} = i$. Note that $n-1$ and $n$ cannot be a starting point of the pattern $123$, so $\sigma$ does not have a $\un123$-match at positions $p_{n-1}$ and $p_{n}$. However, $\sigma$ has a $\un123$-match at every position except $n-1$ and $n$. Thus, $\set{p_{n-1},p_n} = \set{n-1,n}$. Consider position $p_{n-2}$. $\sigma$ must be $\un123$-match at position $p_{n-2}$. That is $n-2$ is a starting point of a pattern 123 in $\sigma$. However, the only way for $n-2$ to be a starting point of $123$ is that $n-2,n-1,n$ form the pattern $123$. That is $n-1$ is to the left of $n$. So, $p_{n-1} = n-1$ and $p_n= n$, which means $\sigma_{n-1} = n-1$ and $\sigma_n = n$.

Thus, we know that all $\sigma$ such that $pmp_{\un123}(\sigma) = n-2$ are precisely all $\sigma$ of the form:

$$\sigma = \sigma_1~\sigma_2~\ldots~\sigma_{n-3}~\sigma_{n-2}~n-1~n .$$
There are $(n-2)!$ such $\sigma$'s, so $P_{n,\un123}(x)|_{x^{n-2}} = (n-2)!$.
\end{proof}

This theorem can generalized to any $PMP$.

\begin{theorem}
Given any positive integer $n,k$ such that $n \ge k$, and any $\tau \in S_k^*$, the degree of $P_{n,\tau}(x)$ is $n-k+1$, and $P_{n,\tau}(x)|_{x^{n-k+1}} = (n-k+1)!$.
\end{theorem}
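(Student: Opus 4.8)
The plan is to mimic the proof of the preceding theorem (the case $\tau = \un{1}23$), isolating the general phenomenon that "almost nothing can match near the top-right corner, and the maximal-match permutations are exactly those with a fixed tail."

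First I would establish the upper bound on the degree. Let $\pi(\tau) = \pi_1\pi_2\ldots\pi_k$ and let $u = u(\tau)$ be the marked position. In any $\tau$-match, the marked element $\sigma_\ell$ plays the role of $\pi_u$, so it needs $u-1$ positions of $\sigma$ strictly before $\ell$ (to realize $\pi_1,\ldots,\pi_{u-1}$) and $k-u$ positions strictly after $\ell$ (to realize $\pi_{u+1},\ldots,\pi_k$). Hence $\sigma$ cannot have a $\tau$-match at any of the first $u-1$ positions or any of the last $k-u$ positions, so $pmp_\tau(\sigma) \le n - (u-1) - (k-u) = n-k+1$. This shows $\deg P_{n,\tau}(x) \le n-k+1$.

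Next I would characterize the permutations attaining the bound and count them. Suppose $pmp_\tau(\sigma) = n-k+1$, i.e. $\sigma$ has a $\tau$-match at every one of the "interior" positions $u, u+1, \ldots, n-k+u$. The key step is to pin down the values forced into the first $u-1$ and last $k-u$ slots by a downward induction, analogous to the argument that forced $\sigma_{n-1}=n-1,\sigma_n=n$ in the displayed special case. Concretely, look at the position $p$ of the value that must serve as $\pi_k$ (the globally-largest-role element) in the match at the last interior position $n-k+u$; iterating this reasoning should force the last $k-u$ entries of $\sigma$ to be a specific permutation of $\{?,\ldots\}$ determined by the pattern $\pi_{u+1}\ldots\pi_k$, and symmetrically (reading the match at the first interior position $u$) force the first $u-1$ entries to be determined by $\pi_1\ldots\pi_{u-1}$. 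Once the first $u-1$ and last $k-u$ entries are completely determined as sets \emph{and} in their internal order, the remaining $n-k+1$ entries occupy the middle block and can be an arbitrary permutation among themselves (each interior position then automatically gets a $\tau$-match using the forced prefix, itself, and the forced suffix); this gives exactly $(n-k+1)!$ permutations, hence $P_{n,\tau}(x)|_{x^{n-k+1}} = (n-k+1)!$ and in particular the degree is exactly $n-k+1$.

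The main obstacle I expect is the forcing step: showing rigorously that $pmp_\tau(\sigma) = n-k+1$ really does pin down the prefix and suffix entries \emph{uniquely}, including their relative order, for an arbitrary pattern $\pi(\tau)$ rather than just $123$. The clean way to handle it is to induct on the interior positions from the outside in — e.g. the entry that must play role $\pi_1$ at interior position $u$ has only one legal home once larger-role entries are placed, and similarly from the right — so that the prefix is forced to be order-isomorphic to $\pi_1\ldots\pi_{u-1}$ sitting "below/above" appropriately and the suffix order-isomorphic to $\pi_{u+1}\ldots\pi_k$. Packaging this induction carefully, while tracking which relative-order constraints are actually implied (as opposed to merely consistent), is the delicate part; the rest is the same counting bookkeeping as in the $\un{1}23$ case.
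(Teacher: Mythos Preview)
Your overall architecture matches the paper's exactly: bound the degree by the position constraint, characterize the extremal $\sigma$, then count the free middle block. The divergence is in the forcing step, which you correctly flag as the crux but leave as a vague ``induction from the outside in.'' As written that plan has a real gap, and the paper fills it with an idea you do not mention.

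The paper's key move is a value--position duality. You observed that $\sigma$ cannot have a $\tau$-match at any of the first $u-1$ or last $k-u$ positions ($k-1$ forbidden positions). The paper adds the dual observation: $\sigma$ also cannot have a $\tau$-match at the position of any value in $\{1,\ldots,\pi_u-1\}\cup\{n-k+\pi_u+1,\ldots,n\}$, since such a value cannot play the role of $\pi_u$ (not enough smaller, respectively larger, elements exist). That is again exactly $k-1$ forbidden positions. When $pmp_\tau(\sigma)=n-k+1$, the two forbidden sets must coincide, so the boundary positions $\{1,\ldots,u-1,\,n-k+u+1,\ldots,n\}$ are occupied precisely by the extreme values $\{1,\ldots,\pi_u-1,\,n-k+\pi_u+1,\ldots,n\}$. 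This pins down the boundary \emph{as a set} in one stroke---no induction required---and is exactly the information your ``$\{?,\ldots\}$'' placeholder is missing.

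With the set known, the paper finishes by reading off four specific matches: the matches at the positions of the values $\pi_u$ and $n-k+\pi_u$ determine which extreme values sit in the left part versus the right part, and the matches at positions $u$ and $n-k+u$ force the relative orders of each part to agree with $\pi_1\ldots\pi_{u-1}$ and $\pi_{u+1}\ldots\pi_k$. Your outside-in scheme recovers only this last piece (the relative orders from the matches at positions $u$ and $n-k+u$); it does not by itself exclude a middle value from sitting in a boundary slot. So the missing ingredient is precisely the value-based counting argument above.
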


\begin{proof}
Given $k\le n$ and $\tau \in S_k^*$. Suppose $\tau$ has the form

$$\tau = \tau_1 ~ \tau_2 ~ \ldots \un \tau_\ell \ldots ~ \tau_{k-1} ~ \tau_k .$$
That is $u(\tau) = \ell$ and $\pi(\tau) = \tau_1 \tau_2 \ldots \tau_k$. Note that there are $\ell-1$ numbers to the left of the underlined number in $\tau$ and there are $k-\ell$ numbers to the right of the underlined number in $\tau$. Given $\sigma \in S_n$, suppose $\sigma$ has a $\tau$-match at position $i$. Then, there must be at least $\ell-1$ numbers to the left of position $i$ in $\sigma$ and there must be at least $k-\ell$ to the right of position $i$ in $\sigma$. Thus, $\ell \le i \le n-k+\ell$. Therefore, $pmp_{\tau}(\sigma) \le n-k+1$. Thus, the degree of $P_{n,\tau}(x)$ is at most $n-k+1$.

Here, we count the number of $\sigma \in S_n$ such that $pmp_\tau(x) = n-k+1$. We shall prove the following claim:

\begin{claim}
Given $\sigma = \sigma_1 \ldots \sigma_n \in S_n$, $pmp_\tau(\sigma) = n-k+1$ if and only if all of the followings hold:
\begin{enumerate}
\item[{(}1{)}] $\set{\sigma_1,\sigma_2,\ldots,\sigma_{\ell-1},\sigma_{n-k+\ell+1},\ldots,\sigma_n} = \set{1,2,\ldots,\tau_\ell-1,n-k+\tau_\ell+1,\ldots,n}$;
\item[{(}2{)}] $red(\sigma_1\sigma_2\ldots\sigma_{\ell-1}\sigma_{n-k+\ell+1}\ldots\sigma_n) = red(\tau_1\tau_2\ldots\tau_{\ell-1}\tau_{\ell+1}\ldots\tau_k)$.
\end{enumerate}

That is, the first $\ell-1$ numbers and last $k-\ell$ numbers in $\sigma$ is a rearrangement of $ \set{1,2,\ldots,\tau_\ell-1,n-k+\tau_\ell+1,\ldots,n}$ in the way that they have the same relative order as $\tau_1\tau_2\ldots\tau_{\ell-1}\tau_{\ell+1}\ldots\tau_k$.
\end{claim}

\begin{proof}(of the claim)

We first prove the converse. Suppose $\sigma$ satisfies (1) and (2). We want to show that $\sigma$ has a $\tau$-match at any position $i$ when $\ell \le i \le n-k+\ell$. Given any such position $i$. Consider the following subsequence

$$\sigma_1\sigma_2 \ldots \sigma_{\ell-1} ~ \sigma_i ~ \sigma_{n-k+\ell+1} \ldots \sigma_n .$$

Note that $\sigma_i$ is the $\tau_\ell$-th smallest number in the subsequence, and $\tau_\ell$ is the $\tau_\ell$-th smallest number in subsequence $\tau_1 \tau_2 \ldots \tau_k$. Thus, we can insert $\sigma_i$ to $\sigma_1\ldots \sigma_{\ell-1}\sigma_{\ell+1}\ldots\sigma_n$ and $\tau_\ell$ to $\tau_1\ldots\tau_{\ell-1}\tau_{\ell+1}\tau_k$. So, $\sigma_1\sigma_2 \ldots \sigma_{\ell-1} ~ \sigma_i ~ \sigma_{n-k+\ell+1} \ldots \sigma_n$ have the same relative order as $\tau_1\ldots \tau_k$. However, $\tau_1\ldots\tau_k = \pi(\tau)$ is a permutation, so $red(\sigma_1\sigma_2 \ldots \sigma_{\ell-1} ~ \sigma_i ~ \sigma_{n-k+\ell+1} \ldots \sigma_n) = \pi(\tau)$. So, $\sigma$ has a $\tau$-match at position $i$.

Here, we prove the forward direction. Suppose $pmp_\tau(\sigma) = n-k+1$, then $\sigma$ has a $\tau$-match at all positions $i$ when $\ell \le i \le n-k+\ell$, and $\sigma$ does not have a $\tau$-match at all positions $j \in \set{1,2,\ldots,\ell-1, n-k+\ell+1,\ldots,n}$.  Note that, in order for $\sigma$ to match at postion $i$, there must be at least $\tau_\ell-1$ numbers less than $\sigma_i$ and there must be at least $k-\tau_\ell$ numbers greater than $\sigma_i$. Therefore $\tau_\ell \le \sigma_i \le n-k+\tau_\ell$. In other words, $\sigma$ does not have a $\tau$-match at positions of $1,2,\ldots,\tau_\ell-1$ nor positions of $n-k+\tau_\ell , n-k+\tau_\ell+1, \ldots,n$. There are $k-1$ such positions, therefore $\set{p_1,p_2,\ldots,p_{\tau_\ell-1}, p_{n-k+\tau_\ell+1},\ldots , p_n} = \set{1,2,\ldots,\ell-1, n-k+\ell+1,\ldots,n}$, or in other words, $\set{\sigma_1,\sigma_2,\ldots,\sigma_{\ell-1},\sigma_{n-k+\ell+1},\ldots,\sigma_n} = \set{1,2,\ldots,\tau_\ell-1,n-k+\tau_\ell+1,\ldots,n}$. Thus, we prove (1).

To prove (2), we only need to show that there is at most one rearrangement of  $\{ 1,2,\ldots,\tau_\ell-1,n-k+\tau_\ell+1,\ldots,n \}$ such that $pmp_\tau(\sigma) = n-k+1$. Then, by converse direction, we know that the rearrangement in (2) make $pmp_\tau(\sigma) = n-k+1$. In that case, we conclude that the unique rearrangement that make $pmp_{\tau}(\sigma) = n-k+1$ exists and must be (2).

Let's call numbers appearing at positions $1,2,\ldots, \ell-1$ the {\em left part of $\sigma$}, while call the numbers appearing at positions $n-k+\ell+1, n-k+\ell+2,\ldots,n$ the {\em right part of $\sigma$}. Then, $\sigma$ has a following structure:

\vspace{.5cm}

\begin{center}
\begin{tikzpicture}
\node at (4,5) {$1,2,\ldots,\tau_\ell-1,n-k+\tau_\ell+1,\ldots n$};
\draw (0,0.2) -- (0,0) -- (2,0) -- (2,0.2);
\draw (2.5,0.2) -- (2.5,0) -- (5.5,0) -- (5.5,0.2);
\draw (6,0.2) -- (6,0) -- (8,0) -- (8,0.2);
\draw[->] (4,4.5) -- (1.2,.7);
\draw[->] (4,4.5) -- (6.8,.7);
\node at (-1,0) {$\sigma$ $=$};
\node at (1,-.5) {left part};
\node at (7,-.5) {right part};
\end{tikzpicture}.
\end{center}

\vspace{.7cm}

We shall prove that ways to rearrange $1,2,\ldots,\tau_\ell-1, n-k+\tau_\ell+1,\ldots n$ in left and right part of $\sigma$ so that $pmp_\tau(\sigma) = n-k+1$ is unique if exist.

Consider the number $\tau_\ell$ in $\sigma$. We know that $\sigma$ has a $\tau$-match at position $p_{\tau_\ell}$. Then, numbers $1,2, \ldots, \tau_\ell-1$ in $\sigma$ must involve in $\tau$-match at position $p_{\tau_\ell}$ since we need $\tau_\ell-1$ numbers smaller than $\tau_\ell$. Therefore, for each number $i$ less than $\tau_\ell$, we can determine whether $i$ is in the left part or the right part of $\sigma$ based on the relative postion of $i$ and $\tau_\ell$ in $\tau$. Also, consider the number $n-k+\tau_\ell$. $\sigma$ must be a $\tau$-match at position $p_{n-k+\tau_\ell}$. By the same reasoning, for each number $n-k+\tau_\ell+i$, we determine whether $n-k+\tau_\ell+i$ is in the left part or right part of $\sigma$ based on the relative position of $\tau_\ell+i$ and $\tau_\ell$ in $\tau$. Thus, we determine both left and right part as sets.

Now, consider position $\ell$ in $\sigma$. $\sigma$ must have a $\tau$-match at position $\ell$. Thus, numbers in first $\ell-1$ positions in $\sigma$ must involve in a $\tau$-match at position $\ell$, and so, the first $\ell-1$ numbers in $\sigma$ must have the same relative order as first $\ell-1$ elements in $\tau$. By the same reasoning, by considering position $n-k+\ell$, the last $k-\ell$ numbers in $\sigma$ have the same relative order as the last $k-\ell$ numbers in $\tau$. Therefore, we completely determine both left and right part of $\sigma$. Thus, there is at most one way to rearrange  $\set{1,2,\ldots,\tau_\ell-1,n-k+\tau_\ell+1,\ldots,n}$. 

Since the rearrangement in (2) makes $pmp_\tau(\sigma) = n-k+1$, $\sigma$, then $\sigma$ must satisfies (2).

\end{proof}

As an example, if $\tau =164\un352$, any $\sigma \in S_9$ with $pmp_{\tau}(\sigma) = 9-6+1 = 4$ must have the following form:

$$\sigma = 1 ~ 9~ 7 ~~ \sigma_4 ~\sigma_5 ~ \sigma_6 ~\sigma_7 ~ 8 ~ 2.$$

There are $(n-k+1)!$ ways to rearrange the ``middle'' part of $\sigma$. Thus, $P_{n,\tau}(x)|x^{n-k+1} = (n-k+1)!.$

\end{proof}

Another coefficient we can describe is $P_{n,\un1 3 2}(x)|_x$. The sequences is A029760 and A139262 on OEIS. The sequence A139262 counts the sum of all inversion of all elements in $S_n(132)$, the set of 132-avoiders in $S_n$. 

\begin{theorem}\label{suminv}
$P_{n+1,\un1 3 2}(x)|_x = \sum_{\sigma \in S_n(132)} inv(\sigma)$
\end{theorem}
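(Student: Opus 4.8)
The plan is to construct an explicit bijection between the set $A:=\{\sigma\in S_{n+1}:pmp_{\un 1 3 2}(\sigma)=1\}$, which is counted by $P_{n+1,\un 1 3 2}(x)|_x$, and the set $B$ of pairs $(\tau,(i,j))$ with $\tau\in S_n(132)$ and $(i,j)$ an inversion of $\tau$ (that is, $i<j$ and $\tau_i>\tau_j$), which plainly has $\sum_{\tau\in S_n(132)}inv(\tau)$ elements.

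I would begin with a structure lemma for $\sigma\in A$. If $\ell$ is the unique position at which $\sigma$ has a $\un 1 3 2$-match, then every occurrence of the pattern $132$ in $\sigma$ has its ``$1$'' at position $\ell$ (otherwise another position would be a match), and $\sigma_\ell$ is never the ``$2$'' or ``$3$'' of such an occurrence. From this one deduces, each time by exhibiting a forbidden $132$-occurrence not starting at $\ell$: (i) $\sigma_\ell=\min(\sigma_1,\ldots,\sigma_\ell)$; (ii) $\sigma_{\ell+1}\ge\sigma_\ell+2$; (iii) the entry of $\sigma$ of value $\sigma_\ell+1$ occurs at a position greater than $\ell$; (iv) deleting the entry $\sigma_\ell$ from $\sigma$ and reducing yields a permutation $\tau\in S_n(132)$ (any $132$-occurrence in the reduced word would be one in $\sigma$ avoiding position $\ell$, which is impossible).

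Next I would define the forward map $\Psi:B\to S_{n+1}$: let $\Psi(\tau,(i,j))$ be obtained from $\tau$ by incrementing by $1$ every entry that is $\ge\tau_j$ and inserting a new entry of value $\tau_j$ into position $i$. To check $\Psi(\tau,(i,j))\in A$: the inverted pair $\tau_i>\tau_j$ together with the new entry forms a $132$-occurrence whose ``$1$'' sits at position $i$, so $i$ is a $\un 1 3 2$-match; no position $>i$ is a match because the entries after position $i$ have the same relative order as $\tau$, hence avoid $132$; and no position $m<i$ is a match, which I would prove by a short case analysis on whether a hypothetical $132$-occurrence at $m$ uses the inserted entry as its ``$1$'', as its ``$3$'', or not at all, each case producing a $132$-occurrence already present in $\tau$. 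In the reverse direction, for $\sigma\in A$ with unique match position $\ell$, put $\tau=red(\sigma_1\cdots\sigma_{\ell-1}\sigma_{\ell+1}\cdots\sigma_{n+1})$ (a $132$-avoider by (iv)) and let $j$ be the position of the value $\sigma_\ell$ in $\tau$; using (ii) and (iii) one gets $\ell<j$ and $\tau_\ell>\tau_j=\sigma_\ell$, so $(\tau,(\ell,j))\in B$. A direct check of the insertion/deletion bookkeeping shows $\sigma\mapsto(\tau,(\ell,j))$ is a two-sided inverse of $\Psi$, and counting $A$ by summing over the fibers of the second coordinate gives the theorem.

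The crux is the pair of ``exactly one match'' arguments: on the forward side, the case analysis showing that inserting the inverted entry into a $132$-avoider creates a $\un 1 3 2$-match at precisely one position; and, dually, establishing the inequalities (ii) and (iii) of the structure lemma, since these are exactly what force $(\ell,j)$ to be a genuine inversion with $\ell<j$ rather than a degenerate pair. The remaining steps — order preservation under relabeling and checking that the two maps compose to the identity — are routine once the structure lemma is in hand.
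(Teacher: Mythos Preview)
Your proposal is correct and follows essentially the same route as the paper: both build the identical bijection that takes a $132$-avoider $\tau$ with a chosen inversion $(i,j)$, bumps up the entries $\ge\tau_j$, and inserts the value $\tau_j$ at position $i$, and both invert it via a structure lemma about the unique match position (your (ii)--(iii) are exactly the paper's Lemma~\ref{lem5}, while your (i) and (iv) make explicit what the paper leaves implicit). One small slip: in your case analysis for positions $m<i$, the inserted entry cannot play the role of the ``$1$'' (that role is at position $m$); the relevant cases are whether it plays the ``$3$'', the ``$2$'', or is unused---but this is cosmetic and the argument goes through.
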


To prove the theorem, we construct sets whose cardinality are $\sum_{\sigma \in S_n(132)} inv(\sigma)$.

\begin{definition}
Let $IMS_n(132)$ be the set of $132$-avoiding $\sigma$ that a pair of elements causing an inversion are marked with $*$.
\end{definition}
As an example, $IMS_3 (132) $ contains 8 elements, which are $\st 2 \st 13$, $\st2 3 \st 1$, $2 \st 3 \st1$, $\st3 \st1 2$, $\st 3 1 \st 2$, $\st 3 \st 2 1$, $\st 3 2 \st 1$, and $3 \st 2 \st 1$. It is easy to see that $|IMS_n(132)| = \sum_{\sigma \in S_n(132)} inv(\sigma)$.

\begin{definition}
Given $\sigma \in S_n$, $i,j \in \Z$, $1 \le i \le n$, $1 \le j \le n+1$. Let $\theta(\sigma,i,j)$ be an element in $S_{n+1}$ obtained from $\sigma$ by 
\begin{enumerate}
\item increasing every number greater than or equal to $i$ by $1$;
\item inserting $i$ at the $j$-th position from the left. (The first position is in front of the first element, and the $n+1$-th position is behind the last element).
\end{enumerate}
\end{definition}

For example, to find $\theta(2143,3,2)$, first, we increase 3,4 by 1 so we have $2154$. Then, we insert 3 at the second position, so we have $23154$. Thus, $\theta(2143,3,2) = 23154$.

Here, we are ready to define a map $\Phi: IMS_n(132) \rightarrow \set{\sigma \in S_{n+1} \hspace{0.1cm} | \hspace{0.1cm} pmp_{\un1 3 2} (\sigma)=1}$.

\begin{definition} 
Given $\bar \sigma \in IMS_n(132)$. Let $\sigma$ be the underlying permutation of $\bar \sigma$. Let $j$ be the position of the first * in $\sigma$, and let $i$ be the number underneath the second * in $\sigma$. Define $\Phi(\bar \sigma) = \theta(\sigma,i,j)$.
\end{definition}

\begin{lemma}
Given $\bar \sigma \in IMS_n(132)$ and $\hat \sigma = \Phi(\bar \sigma)$, then $pmp_{\un 1 3 2}(\hat \sigma) = 1$. Thus, $\Phi$ is a map from $IMS_n(132)$ to $\set{\sigma \in S_{n+1} | pmp_{\un1 3 2} (\sigma)=1}$.
\end{lemma}

\begin{proof}
Let $\bar \sigma \in IMS_n(132)$ has a form

$$\bar \sigma = \sigma_1 \ldots \sigma_{k-1} \st \sigma_k \sigma_{k+1} \ldots \sigma_{\ell-1} \st \sigma_\ell \sigma_{\ell+1} \ldots \sigma_n$$
and $\sigma$ is the underlying permutation. Then, $\Phi(\bar \sigma) = \theta(\sigma , \sigma_l,k)$ looks like

$$\Phi(\bar \sigma) = \sigma_1' \ldots \sigma_{k-1}' \sigma_l  \sigma_k' \sigma_{k+1}' \ldots \sigma_{\ell-1}'\sigma_\ell' \sigma_{\ell+1}' \ldots \sigma_n'$$

where

$$\sigma_i' = \left \{ \begin{array}{ll} \sigma_i +1& \text{for } \sigma_i \ge \sigma_l \\ \sigma_i & \text{for } \sigma_i < \sigma_l \end{array}\right ..$$

Note that $\sigma_\ell' = \sigma_\ell+1$. Also, since $\sigma_k$ and $\sigma_\ell$ cause an inversion, so $\sigma_k > \sigma_\ell$, and thus $\sigma_k' = \sigma_k+1$. Therefore, $\sigma_\ell < \sigma_\ell' < \sigma_k'$. Equvialently, $\sigma_\ell, \sigma_k', \sigma_\ell'$ form the pattern $132$ in $\hat \sigma$. Thus, $pmp_{\un1 3 2}(\hat \sigma) \ge 1$. To prove that $pmp_{\un 1 3 2}(\hat \sigma) = 1$, we need to show that $\hat \sigma$ does not contain a pattern 132 that starts at any element other than $\sigma_\ell$.

Suppose otherwise. That is, $\hat \sigma$ contains the pattern $132$ where the starting element is $\sigma'_t$ for some $t$. We have two cases:

\begin{enumerate}
\item If the pattern does not involve $\sigma_\ell$, then there must be $u,v$ such that $\sigma'_t, \sigma'_u, \sigma'_v$ form a pattern 132. It is easy to see that $\sigma_t, \sigma_u,\sigma_v$ also form a pattern 132 in $\sigma$, which is a contradiction since $\sigma$ is a 132-avoider.
\item If the pattern involves $\sigma_\ell$, then $t<k$ and  $\sigma_t' < \sigma_\ell$. In this case, it is easy to see that $\sigma_t', \sigma_k', \sigma_\ell'$ also form the pattern $132$. Thus, $\sigma_t, \sigma_k, \sigma_\ell$ form the pattern $132$ in $\sigma$, which is again a contradiction.
\end{enumerate}

Thus, $\hat \sigma$ does not contain the pattern 132 that starts at any element except $\sigma_\ell$. So, $pmp_{\un 1 3 2}(\hat \sigma)=1$.
\end{proof}

The map $\Phi$ is clearly an injection. Let $\sigma = \sigma_1 \ldots \sigma_n$ be in the image of $\Phi$, and let $\sigma_i$ be the starting point of the pattern 132. Then, $i$ determines the first starred element, and $\sigma_i$ determines the second starred element. To prove that the map $\Phi$ is a surjective, we need another lemma:

\begin{lemma}
\label{lem5}
Given $\sigma = \sigma_1 \ldots \sigma_n \in S_{n}$ such that $pmp_{\un 1 3 2}(\sigma) = 1$. Suppose the pattern $132$ in $\sigma$ starts at $\sigma_t$, then the following must be true:
\begin{enumerate}
\item[(1)] $\sigma_{t+1} \ge \sigma_t$.
\item[(2)] $\sigma_t +1$ is on the right of $\sigma_t$.
\item[(3)] $\sigma_{t+1} \neq \sigma_{t}+1$.
\end{enumerate}

That is, the graph of $\sigma$ should look like 

\begin{center}
\begin{tikzpicture}
\draw (4,0) -- (-.5,0);
\draw (0,4) -- (0,-.5);
\node at (1,1) {$\sigma_t$};
\filldraw (1.3,1.3) circle (1.5pt);
\draw [dotted] (4,1.3) -- (-.5,1.3);
\draw [dotted] (1.3,4) -- (1.3,-.5);
\filldraw (1.6,3) circle (1.5pt);
\filldraw (3,1.6) circle (1.5pt);
\node at (2.2,3.3) {$\sigma_{t+1}$};
\node at (3.7,1.9) {$\sigma_t+1$};
\end{tikzpicture} .
\end{center}

\end{lemma}

\begin{proof}
Suppose (1) is not true. There must be $u,v$ such that $\sigma_t,\sigma_u,\sigma_v$ form a pattern 132. Then, $\sigma_{t+1}, \sigma_u, \sigma_v$ also form a pattern 132. So, $pmp_{\un 132}(\sigma) \ge 2$.

Suppose (2) is not true. There must be $u,v$ such that $\sigma_t,\sigma_u,\sigma_v$ form a pattern 132. Then, $\sigma_t+1, \sigma_u, \sigma_v$ also form a pattern 132. So, $pmp_{\un 132}(\sigma) \ge 2$.

Suppose (3) is not true. There must be $u,v$ such that $\sigma_t, \sigma_u, \sigma_v$ form a pattern 132. Then, $\sigma_{t+1}, \sigma_u, \sigma_v$ also form a pattern 132. So, $pmp_{\un 132}(\sigma) \ge 2$.

In all cases, we have a contradiction. Therefore,  (1), (2) and (3) are true. 
\end{proof}

\begin{lemma}
The map $\Phi: IMS_n(132) \rightarrow \set{\sigma \in S_{n+1} | \un1 3 2 (\sigma)=1}$ is surjective.
\end{lemma}

\begin{proof}
Given any $\sigma \in \set{\sigma \in S_{n+1} | \un1 3 2 (\sigma)=1}$. Let $\sigma_t$ be the starting point of the pattern 132 in $\sigma$. By lemma \ref{lem5}, we know that $\sigma_t, \sigma_{t+1}, \sigma_t+1$ form a pattern 132. Let $\tilde \sigma$ be a permutation obtained from $\sigma$ by removing $\sigma_t$ and reducing. Let $\tilde \sigma^*$ be $\tilde \sigma$ with * marked on elements corresponding to $\sigma_{t+1}$ and $\sigma_t+1$ before reducing ($\sigma_{t+1}-1, \sigma_t$ after reducing). $\tilde \sigma^*$ is an element is $IMS_n(132)$ since $\sigma_{t+1},\sigma_t+1$ form an inversion. Clearly, $\Phi(\tilde \sigma^*) = \sigma$.
\end{proof}

For example, consider $\sigma = 785269314 \in S_9$ with $pmp_{132}(\sigma) = 1$, where only possible starting point of the pattern 132 is 2. So, in this example, $\sigma_t, \sigma_{t+1}, \sigma_{t} + 1$ are 2,6,3. Then, removing 2 and reducing give $\tilde \sigma = 67458213$. Then, put * at elements corresponding to 6,3 before reducing, which are 5,2 after reducing. So, $\tilde \sigma^* = 674 \st 5 8 \st 2 13$.

Therefore $\Phi$ is a bijection, and so we prove Theorem \ref{suminv}.

\subsection{Equivalence of $1\un 2 3$ and $1 \un 3 2$}

By observing diagrams of $1\un 3 2$ and $13\un2$, we see that those two patterns are equivalent by Lemma \ref{sym}. Thus, we will instead prove that $1 \un 2 3$ and $1 3 \un 2 $ are equivalent.

\begin{center}
\young(\hfill \circ \hfill,\hfill \hfill \times,\times \hfill \hfill) \young(\hfill \times \hfill,\hfill \hfill \circ,\times \hfill \hfill)

Diagrams of $1 \un 3 2$ and $1 3 \un2$
\end{center}

Here, we restate and prove one of the theorems mentioned in Section \ref{intro}.

\threetwo*

For our convenience, we let $\tau_1 = 1 \un 2 3$ and $\tau_2 =13 \un 2$. We will first prove by showing that two generating functions satisfy the same recursive formula. We later will construct a bijection from $S_n$ to itself that maps an $pmp_{\tau_1}$ to $pmp_{\tau_2}$, that is, a map $\theta$ such that $pmp_{\tau_1}(\sigma) = pmp_{\tau_2}(\theta(\sigma))$. 	

Given any permutation $\sigma \in S_n$. We will look at the position of $1$ in $\sigma$. Let $P_{n,\tau_1,k}(x) = \sum x^{pmp_{\tau_1}(\sigma)}$, where the sum is over all permutations in $S_n$ with 1 at position $k$. Then, we have $P_{n,\tau_1}(x) = \sum_{k=1}^{n} P_{n,\tau_1,k}(x)$.

Here, we derive a recursive formula for $P_{n,\tau_1,k}(x)$.

\begin{lemma} \label{firstcase} Let $\tau_1 = 1\un2 3$ and let $P_{n,\tau_1,k} = x^{pmp_{\tau_1}(\sigma)}$ where the sum is over all $\sigma \in S_n$ with $1$ at position $k$. Then, $P_{n,\tau_1,k}(x)$ satisfies the following recursive formula:

$$P_{n,\tau_1,k}(x) =  \left ( \sum_{\ell=1}^{k-1} P_{n-1,\tau_1,\ell}(x) \right )  + (x (n-k-1) + 1) P_{n-1,\tau_1,k}(x)$$
where $P_{n,\tau_1,N}(x)= 0$ for $N>n$ by convention.
\end{lemma}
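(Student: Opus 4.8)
The plan is to classify the permutations $\sigma\in S_n$ with $\sigma_k=1$ according to the position of $1$, but unlike the $\tau_1=\un{1}2 3$ argument, here we need to track how deleting $1$ affects the $pmp_{\tau_1}$-statistic and where $1$ can sit relative to the rest. Since $\tau_1=1\un{2}3$ requires one element smaller and to the left of the marked element and one element larger and to the right, the element $1$ is never the marked element of a $\tau_1$-match (it has nothing smaller to its left); instead $1$ serves only as the ``left foot'' of potential matches. So I would write $\sigma=\sigma_1\cdots\sigma_{k-1}\,1\,\sigma_{k+1}\cdots\sigma_n$ and let $\sigma'\in S_{n-1}$ be the permutation obtained by deleting the $1$ in position $k$ and reducing; note $1$ now sits at position $k$ in $\sigma'$ only if $k$ was... actually the reduced permutation $\sigma'$ has its own ``$1$'' somewhere among the old $\sigma_1,\dots,\sigma_{k-1},\sigma_{k+1},\dots,\sigma_n$. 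That is the subtlety I expect to have to handle carefully.

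First I would observe: for a position $i\ne k$, whether $\sigma$ has a $\tau_1$-match at $i$ depends only on whether there is some element smaller than $\sigma_i$ to its left (in $\sigma$) and some element larger to its right. Deleting the $1$ cannot destroy the ``larger element to the right'' condition, and it destroys the ``smaller element to the left'' condition for position $i$ only if the \emph{only} element smaller than $\sigma_i$ and left of $i$ was the $1$ itself — which happens exactly when $\sigma_i$ is the second-smallest value among $\sigma_1,\dots,\sigma_i$ and all of $\sigma_1,\dots,\sigma_{i-1}$ except the $1$ are larger than $\sigma_i$, i.e. $\sigma_i=2$ and $2$ lies to the right of $1$. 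So the deletion changes the statistic by at most the single position holding the value $2$. Combining this with a case split on where $1$ sits, I would show: if $1$ is at position $k$ and the value $2$ is to the \emph{left} of $1$ (i.e. among the first $k-1$ entries), then $pmp_{\tau_1}(\sigma)=pmp_{\tau_1}(\sigma')$, and in $\sigma'$ the value $1$ lands at a position $\ell$ with $1\le \ell\le k-1$; this yields the term $\sum_{\ell=1}^{k-1}P_{n-1,\tau_1,\ell}(x)$. If instead $2$ is to the \emph{right} of $1$, then the $1$ at position $k$ together with the $2$ and any larger element to the right of $2$ forms a $\tau_1$-match at the position of $2$ in $\sigma$ precisely when there is such a larger element to the right of $2$; one checks that in $\sigma'$ the value $1$ lands at position $k$ again, and that $pmp_{\tau_1}(\sigma)=pmp_{\tau_1}(\sigma')+\epsilon$ where $\epsilon=1$ iff $2$ is not at the last position. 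The number of ways to place the new largest-or-whatever element... more precisely, among the $n-1$ positions of $\sigma'$ with $1$ fixed at position $k$, inserting the value $1$ back just before position $k$ and choosing which of the $n-k$ positions to the right of it will hold the value $2$: of these $n-k$ choices, $n-k-1$ give a $\tau_1$-match at the position of $2$ and one (the last position) does not. This produces the factor $\bigl(x(n-k-1)+1\bigr)P_{n-1,\tau_1,k}(x)$.

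Summing the two cases gives
$$P_{n,\tau_1,k}(x)=\left(\sum_{\ell=1}^{k-1}P_{n-1,\tau_1,\ell}(x)\right)+\bigl(x(n-k-1)+1\bigr)P_{n-1,\tau_1,k}(x),$$
with the boundary convention $P_{n,\tau_1,N}(x)=0$ for $N>n$ absorbing the degenerate cases ($k=n$, where $1$ is last and contributes nothing, matching $x(n-n-1)+1=\dots$ — here one must check the $k=n$ edge separately since $n-k-1=-1$, but $1$ at the end means every match of $\sigma$ is a match of $\sigma'$ and the sum $\sum_{\ell=1}^{n-1}$ already covers it; I would state this as the convention making the formula hold). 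The main obstacle is the bookkeeping in the second case: verifying that the position of the value $1$ in the reduced permutation $\sigma'$ is exactly $k$, and that exactly one of the $n-k$ reinsertion choices fails to create a new match. I would verify the base case $P_{2,\tau_1,1}(x)=P_{2,\tau_1,2}(x)=1$ against the recursion to pin down the conventions, and then the equidistribution with $\tau_2=13\un{2}$ will follow once the companion lemma for $P_{n,\tau_2,k}(x)$ is shown to satisfy the same recursion.
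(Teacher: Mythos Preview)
Your overall plan—case split on the position of the value $2$ relative to the value $1$—is exactly the paper's strategy, but there is a genuine error in your execution of the second case.

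You define $\sigma'$ by deleting the $1$ at position $k$ and then claim that deleting $1$ changes the match status of at most one position, namely the position of the value $2$. Your justification is that the ``smaller element to the left'' condition is destroyed only when ``$\sigma_i$ is the second-smallest value among $\sigma_1,\dots,\sigma_i$ \dots, i.e.\ $\sigma_i=2$.'' That last ``i.e.'' is false: being the second-smallest among $\sigma_1,\dots,\sigma_i$ does not force $\sigma_i$ to be the global value $2$. Take $\sigma = 1324$ with $k=1$. Here $pmp_{1\un23}(\sigma)=2$ (positions $2$ and $3$ both match), but after deleting the $1$ you get $\sigma'=213$ with $pmp_{1\un23}(\sigma')=0$: \emph{two} positions lost their match, not one. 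Position $2$ (value $3$) lost its match because $1$ was its only smaller left neighbour, even though $3\ne 2$. So your formula $pmp_{\tau_1}(\sigma)=pmp_{\tau_1}(\sigma')+\epsilon$ with $\epsilon\in\{0,1\}$ fails, and the recursion you derive from it would be wrong. (There is also an internal inconsistency: with $\sigma'$ defined by deleting $1$, the new value $1$ sits at position $\ell-1$, not at position $k$; your sentence ``in $\sigma'$ the value $1$ lands at position $k$ again'' is only true if you delete $2$, not $1$.)

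The fix—and this is what the paper does—is to delete \emph{different} elements in the two cases. When $2$ lies to the left of $1$, delete $1$; since $2$ is then to the left of every position $i>k$, the value $2$ serves as a replacement ``smaller left element'' everywhere $1$ did, and no match status changes. When $2$ lies to the right of $1$ at position $\ell$, delete $2$ instead: then $1$ stays put at position $k$, and since $1$ sits to the left of every position where $2$ could have served as the ``$1$'' in the pattern, removing $2$ affects no match except the one at position $\ell$ itself. That position $\ell$ is a match iff $\ell<n$ (use $1,2,\sigma_n$), giving the $n-k-1$ choices with an extra factor $x$ and one choice without, hence the factor $\bigl(x(n-k-1)+1\bigr)P_{n-1,\tau_1,k}(x)$.
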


\begin{proof}
We derive a recursive formula for $P_{n,\tau_1,k}(x)$ by looking at the position of 2 in $\sigma$ where the position of $1$ is $k$. $\sigma$ has a following form

$$\sigma = \sigma_1 ~ \sigma_2 ~ \ldots ~\sigma_{k-1} ~ 1 ~ \sigma_{k+1} ~\ldots ~\sigma_n .$$

Let $\ell$ be the position of $2$. We have 3 cases:

\begin{enumerate}
\item $1 \le \ell \le k-1$. In this case, we can remove $1$ from $\sigma$ and decrease other elements by 1 without effecting $\tau_1$-match at other position. To see this, first note that if $\sigma$ does not have a $\tau_1$-match at a particular position, removing 1 and reducing will not change it. Also, if $\sigma$ does not have a $\tau_1$-match at some position before 1, then removing $1$ will not change $\tau_1$-match at the position, since $1$ can only serve as $1$ in $1 \un 2 3$. However, $1$ appears after the position that $\tau_1$-match occurs. Thus, $\sigma$ has a $\tau_1$-match at the position without considering 1. Lastly, if $\sigma$ has a $\tau_1$-match at some position $t$ after 1. Note that 2 appears before 1 in $\sigma$. Thus, 2 can serve as 1 in $1 \un 2 3$. Therefore, $\sigma$ still has a $\tau_1$-match at position $t$ when not considering $1$. So, we can remove $1$ and decrease other elements by 1 without effecting $\tau_1$-match. 

After removing 1 and reducing, the position of $1$ will be $\ell$. Thus, this case contributes $\sum_{\ell=1}^{k-1} P_{n-1,\tau_1,\ell}(x)$.

\item $k+1 \le \ell \le n-1$. In this case, $\sigma$ has a $\tau_1$-match at position $\ell$ since $1 ~2~ \sigma_n$ form the pattern $\tau_1$. Moreover, by the same reason as in the first case, removing 2 and reducing will not effect $\tau_1$-match at other position.

After removing 2 and reducing, the position of $1$ is still $\ell$. Thus, this case contributes $x(n-k-1)P_{n-1,\tau_1,k}(x)$.

\item $\ell=n$. In this case, $\sigma$ does not have a $\tau_1$-match at position $\ell$, and removing $2$ will not effect $\tau_1$-match at other positions. Thus, this case contributes $P_{n-1,\tau_1,k}(x)$.
\end{enumerate}

In total, we have

\begin{eqnarray*}
P_{n,\tau_1,k}(x) &=&  \left ( \sum_{\ell=1}^{k-1} P_{n-1,\tau_1,\ell}(x) \right )  + x (n-k-1) P_{n-1,\tau_1,k}(x) + P_{n-1,\tau_1,k}(x) \\
&=& \left ( \sum_{\ell=1}^{k-1} P_{n-1,\tau_1,\ell}(x) \right )  + (x (n-k-1) + 1) P_{n-1,\tau_1,k}(x) .
\end{eqnarray*}

Note that, the first case vanishes when $k=1$, but the formula is correct since it would contribute an empty summation. The second case vanishes when $n-1 \le k \le n$, but the formula is still correct since $n-k-1 = 0$ for $k = n-1$ and $P_{n-1,\tau_1,n}(x) = 0$ for $k=n$. The last case also vanishes when $k=n$, but the formula is still correct as $P_{n-1,\tau_1,n}(x) = 0$.

\end{proof}

Here, we prove a similar result of $\tau_2 = 1 3 \un 2$.

\begin{lemma}
Let $\tau_2 = 1 3\un 2 $ and let $P_{n,\tau_2,k}(x) = x^{pmp_{\tau_2} (\sigma)}$ where the sum is over all $\sigma \in S_n$ with 1 at position $k$. Then, $P_{n,\tau_2,k}(x)$ satisfies the following recursive formula:

$$P_{n,\tau_2,k}(x) =  \left ( \sum_{\ell=1}^{k-1} P_{n-1,\tau_2,\ell}(x) \right )  + (x (n-k-1) + 1) P_{n-1,\tau_2,k}(x).$$

\end{lemma}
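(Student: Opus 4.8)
The plan is to mirror the proof of Lemma~\ref{firstcase} almost verbatim, replacing the pattern $\tau_1 = 1\un23$ by $\tau_2 = 13\un2$ and tracking how the role of the underlined element changes. As before, I would fix $\sigma \in S_n$ with $1$ at position $k$, so that
$$\sigma = \sigma_1 ~\sigma_2 ~\ldots ~\sigma_{k-1} ~1 ~\sigma_{k+1} ~\ldots ~\sigma_n,$$
and I would split into cases according to the position $\ell$ of the element $2$ in $\sigma$. The three cases are $1 \le \ell \le k-1$ (the $2$ is to the left of the $1$), $k+1 \le \ell \le n-1$ (the $2$ is strictly between the $1$ and the last position), and $\ell = n$ (the $2$ is last). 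In each case I would argue that removing whichever of $1$ or $2$ is "irrelevant", and reducing, changes neither the number of $\tau_2$-matches nor the position of $1$ in the expected controlled way, so that the contribution can be expressed in terms of $P_{n-1,\tau_2,\bullet}(x)$.

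For the detailed case analysis: in the first case ($2$ left of $1$), I would remove $1$ and reduce. The key point is that the element $1$ can only play the role of the "$1$" in $13\un2$, hence only ever appears as the \emph{first} (leftmost) element of a $\tau_2$-occurrence; since the underlined position in an occurrence is the $2$ of the pattern $132$, which sits strictly to the right of that leftmost element, deleting $1$ does not destroy or create any $\tau_2$-match at any surviving position, because whenever $1$ served as the left anchor of some occurrence, the element $2$ (lying between $1$'s occurrence and the rest, to the left of $1$) can take over that role — exactly the substitution argument used for $\tau_1$. After deletion $1$ lands at position $\ell$, so this case contributes $\sum_{\ell=1}^{k-1}P_{n-1,\tau_2,\ell}(x)$. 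In the second case ($2$ strictly between position $k$ and position $n$), the triple $1,\, 2,\, \sigma_n$ realizes $132$ with $2$ underlined, so $\sigma$ has a $\tau_2$-match at position $\ell$; I would then remove the element $2$ and reduce, observing (again as for $\tau_1$) that this does not affect $\tau_2$-matches at the other positions since $2$, sitting after $1$, can be replaced by $1$ as the left anchor whenever needed, and $1$ stays at position $k$. There are $n-k-1$ choices for $\ell$ here, contributing $x(n-k-1)P_{n-1,\tau_2,k}(x)$. In the third case ($\ell = n$), the element $2$ at the last position cannot be the underlined $2$ of any $132$ (there is nothing to its right to serve as the "$3$"), so there is no $\tau_2$-match at position $n$, and deleting $2$ leaves everything else unchanged with $1$ still at position $k$, contributing $P_{n-1,\tau_2,k}(x)$. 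Summing,
$$P_{n,\tau_2,k}(x) = \left(\sum_{\ell=1}^{k-1}P_{n-1,\tau_2,\ell}(x)\right) + x(n-k-1)P_{n-1,\tau_2,k}(x) + P_{n-1,\tau_2,k}(x),$$
which is the claimed formula after combining the last two terms.

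I expect the only real obstacle to be the careful verification in the first case that no $\tau_2$-match is lost or gained among the surviving positions when $1$ is deleted — in particular confirming that the "$2$ can substitute for $1$" argument is valid precisely because $2$ lies to the left of $1$ in this case, and that the edge behavior ($k=1$, $k=n-1$, $k=n$) is handled by the stated conventions $P_{n-1,\tau_2,N}(x)=0$ for $N>n-1$ exactly as in Lemma~\ref{firstcase}. Once that is in hand, the conclusion of the theorem follows by noting that $P_{n,\tau_1,k}(x)$ and $P_{n,\tau_2,k}(x)$ satisfy the same recursion with the same base values $P_{1,\tau_i,1}(x)=1$, so they agree for all $n,k$, hence $P_{n,\tau_1}(x) = P_{n,\tau_2}(x)$ for all $n$; combined with the symmetry equivalence of $1\un32$ and $13\un2$ from Lemma~\ref{sym}, this yields Theorem~\ref{threetwo}. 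I would then (optionally) describe the promised bijection $\theta$ built recursively from this recursion, exactly paralleling the construction given for Theorem~\ref{threeone}.
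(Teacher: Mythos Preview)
Your case split for the position $\ell$ of $2$ when $\ell>k$ is wrong, and the argument only lands on the right formula by accident. In $\tau_2 = 13\un2$ the underlined element is the \emph{last} entry of the pattern, so a $\tau_2$-match at position $\ell$ requires two elements to the \emph{left} of $\ell$ (playing the roles of ``$1$'' and ``$3$''), and nothing to the right. Thus in your Case~3 ($\ell=n$) your justification ``there is nothing to its right to serve as the $3$'' is irrelevant: in fact $1,\sigma_{k+1},2$ (at positions $k<k+1<n$) is a perfectly good $132$ with the $2$ underlined, so there \emph{is} a $\tau_2$-match at position $n$ whenever $k\le n-2$. Conversely, in your Case~2 the triple $1,2,\sigma_n$ at positions $k<\ell<n$ has values $1<2<\sigma_n$, which is the pattern $123$, not $132$; and when $\ell=k+1$ there is no room between $1$ and $2$ for the ``$3$'', so there is \emph{no} $\tau_2$-match at that position.

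The correct split (and this is what the paper does) is: $\ell=k+1$ gives no match and contributes $P_{n-1,\tau_2,k}(x)$, while $k+2\le \ell\le n$ gives a match via $1,\sigma_{k+1},2$ and contributes $x(n-k-1)P_{n-1,\tau_2,k}(x)$. The count $n-k-1$ and the single exceptional position are the same as in your version, which is why your final formula is right, but the identification of which $\ell$ falls in which bucket is reversed. Your Case~1 and the boundary remarks are fine.
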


\begin{proof}
We derive a recursive formula for $P_{n,\tau_2,k}(x)$ by looking at the position of $2$ in $\sigma$ where the position of $1$ is $k$. $\sigma$ has a following form

$$\sigma = \sigma_1 ~ \sigma_2 ~ \ldots ~\sigma_{k-1} ~ 1 ~ \sigma_{k+1} ~\ldots ~\sigma_n. $$

Let $\ell$ be the position of $2$. We have 3 cases:

\begin{enumerate}

\item $1 \le \ell \le k-1$. In this case, by the same reason as the first case in the proof of Lemma \ref{firstcase}, we can remove $1$ from $\sigma$ and decrease other elements by 1 without effecting $\tau_1$-match at other positions.

After removing 1 and reducing, the position of $1$ will be $\ell$. Thus, this case contributes $\sum_{\ell=1}^{k-1} P_{n-1,\tau_2,\ell}(x)$.

\item $\ell=k+1$. In this case, $\sigma$ does not have a $\tau_2$-match at position $\ell$, and removing $2$ will not effect $\tau_2$-match at other positions by the same reason as above. Thus, this case contributes $P_{n-1,\tau_2,k}(x).$

\item $k+2 \le \ell \le n$. In this case, $\sigma$ has a $\tau_2$-match at position $\ell$ since $1 ~\sigma_{k+1} ~ 2$ form the pattern $\tau_2$. Moreover, by the same reason as above, removing 2 and reducing will not effect $\tau_2$-match at other position.

After removing 2 and reducing, the position of $1$ is still at $\ell$. Thus, this case contribute $x(n-k-1)P_{n-1,\tau_2,k}(x)$.

\end{enumerate}

In total, we have

\begin{eqnarray*}
P_{n,\tau_2,k}(x) &=&  \left ( \sum_{\ell=1}^{k-1} P_{n-1,\tau_2,\ell}(x) \right )  + x (n-k-1) P_{n-1,\tau_2,k}(x) + P_{n-1,\tau_2,k}(x) \\
&=& \left ( \sum_{\ell=1}^{k-1} P_{n-1,\tau_2,\ell}(x) \right )  + (x (n-k-1) + 1) P_{n-1,\tau_2,k}(x).
\end{eqnarray*}

Note that the first case vanishes when $k=1$, but the formula is still correct as it contributes an empty summation. The second case vanishes when $k=n$. The formula is still correct as $P_{n-1,\tau_2,n}(x) = 0 $. The last case vanishes when $k=n-1,n$. When $k=n-1$, the formula is correct as $n-k-1= 0$. When $k=n$, the formula is correct as $P_{n-1,\tau_2,n}(x)=0$.

\end{proof}

It is easy to see that $P_{1,\tau_1,1}(x) = 1 = P_{1,\tau_2,1}(x)$. Since, $P_{n,\tau_1,k}(x)$ and $P_{n,\tau_2,k}(x)$ satisfy the same recursive formula and have the same initial values, we prove that $P_{n,\tau_1,k}(x) = P_{n,\tau_2,k}(x)$ for all $n,k$ such that $1 \le k \le n$, and thus $P_{n,\tau_1}(x) = P_{n,\tau_2}(x)$ for all $n \ge 1$. Hence, we prove Theorem \ref{threetwo}.	

\ \\

We also construct a bijection $\theta : S_n \rightarrow S_n$ such that $pmp_{\tau_1}(\sigma) = pmp_{\tau_2}(\theta(\sigma))$ based on the recursive formula. We define $\theta(1) = 1, \theta(12) = 12$ and $\theta(21) = 21$. In general, $\theta$ will satisfies the following properties:

\begin{enumerate}
\item The position of 1 in $\sigma$ is the same as the position of 1 in $\theta(\sigma)$.
\item $pmp_{\tau_1}(\sigma) = pmp_{\tau_2}(\theta(\sigma)).$
\end{enumerate}

$\theta$ satisfies the properties for $S_1$ and $S_2$. For $S_{n+1}$ where $n \ge 2$, we define the map recursively. Given $\sigma \in S_n$, we have $\sigma' = \theta(\sigma)$. We know that the position of 1 in $\sigma$ is the same as the position of 1 in $\sigma'$. Let $k$ be the position.

Here, we obtain an element $\hat \sigma \in S_{n+1}$ by applying one of the following:

\begin{enumerate}
\item Increase every element by 1, and insert 1 at or after position $k+1$.
\item Increase every element except 1 by 1, and insert 2 at or after position $k+1$.
\end{enumerate}

Then, we obtain $\theta(\hat \sigma)$ by applying similar action to $\sigma'$ based on an action applied to $\sigma$:

\begin{enumerate}
\item If 1 was inserted to $\sigma$, then increase every element in $\sigma'$ by 1 and insert 1 at the same position as inserted in $\sigma$.
\item If 2 was inserted to $\sigma$ at position $r$, then increase every element in $\sigma'$ except 1 by 1 and insert 2 at position $r'$ where

$$r' = \left \{ \begin{array}{ll} r+1 & k+1 \le r \le n \\ k+1 & r=n+1 \end{array} \right ..$$
\end{enumerate}
 Corresponding positions when inserting 1 or 2 can be viewed from Figure \ref{bij2-1} and Firgure \ref{bij2-2}.

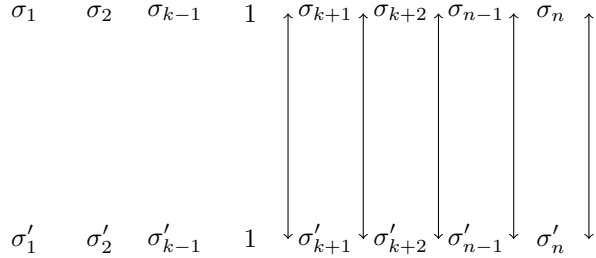
\begin{figure}[h]
\begin{center}
\begin{tikzpicture}
\node at (1,4) {$\sigma_1$};
\node at (2,4) {$\sigma_2$};
\node at (3,4) {$\sigma_{k-1}$};
\node at (4,4) {$1$};
\node at (5,4) {$\sigma_{k+1}$};
\node at (6,4) {$\sigma_{k+2}$};
\node at (7,4) {$\sigma_{n-1}$};
\node at (8,4) {$\sigma_{n}$};

\node at (1,1) {$\sigma'_1$};
\node at (2,1) {$\sigma'_2$};
\node at (3,1) {$\sigma'_{k-1}$};
\node at (4,1) {$1$};
\node at (5,1) {$\sigma'_{k+1}$};
\node at (6,1) {$\sigma'_{k+2}$};
\node at (7,1) {$\sigma'_{n-1}$};
\node at (8,1) {$\sigma'_{n}$};

\draw[<->] (4.5,1) -- (4.5,4);
\draw[<->] (5.5,1) -- (5.5,4);
\draw[<->] (7.5,1) -- (7.5,4);
\draw[<->] (8.5,1) -- (8.5,4);
\draw[<->] (6.5,1) -- (6.5,4);

\end{tikzpicture}
\end{center}
\caption{Corresponding positions when inserting 1}
\label{bij2-1}
\end{figure}

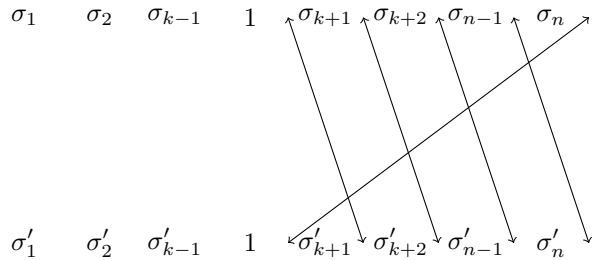
\begin{figure}[h]
\begin{center}
\begin{tikzpicture}
\node at (1,4) {$\sigma_1$};
\node at (2,4) {$\sigma_2$};
\node at (3,4) {$\sigma_{k-1}$};
\node at (4,4) {$1$};
\node at (5,4) {$\sigma_{k+1}$};
\node at (6,4) {$\sigma_{k+2}$};
\node at (7,4) {$\sigma_{n-1}$};
\node at (8,4) {$\sigma_{n}$};

\node at (1,1) {$\sigma'_1$};
\node at (2,1) {$\sigma'_2$};
\node at (3,1) {$\sigma'_{k-1}$};
\node at (4,1) {$1$};
\node at (5,1) {$\sigma'_{k+1}$};
\node at (6,1) {$\sigma'_{k+2}$};
\node at (7,1) {$\sigma'_{n-1}$};
\node at (8,1) {$\sigma'_{n}$};

\draw[<->] (4.5,1) -- (8.5,4);
\draw[<->] (5.5,1) -- (4.5,4);
\draw[<->] (6.5,1) -- (5.5,4);
\draw[<->] (7.5,1) -- (6.5,4);
\draw[<->] (8.5,1) -- (7.5,4);

\end{tikzpicture}
\end{center}
\caption{Corresponding positions when inserting 2}
\label{bij2-2}
\end{figure}

The position of $1$ in $\hat \sigma$ and $\theta(\hat \sigma)$ will be the same. Also, $pmp_{\tau_1}(\sigma)$ would increase by $1$ if and only if 2 was inserted at a non-last position, while $pmp_{\tau_2}({\sigma'})$ will increase by 1 if and only if 2 was inserted at any position but $k+1$. Thus, $pmp_{\tau_1}(\hat \sigma) = pmp_{\tau_2}(\theta(\hat \sigma))$. Also, it is not hard to see that every $\sigma \in S_n$ can be obtained by inserting 1 or 2 repeatedly in a unique way. Therefore, $\theta$ is a bijection.

As an example, we start with $\theta(12) = 12$. Say, we would like to insert 2, then the diagram looks like

\begin{center}
\begin{tikzpicture}
\node at (1,3) {$1$};
\node at (2,3) {$2$};

\node at (1,1) {$1$};
\node at (2,1) {$2$};

\draw[<->] (1.5,1) -- (2.5,3);
\draw[<->] (2.5,1) -- (1.5,3);
\end{tikzpicture}.
\end{center}

Suppose we insert 2 to the preimage 12 at the last position, so then get $132$. According to the diagram, we should insert 2 to the image 12 at the first position, so we get $123$. Thus, $\theta(132) = 123$. Here, suppose we would like to insert 1, then the diagram looks like:

\begin{center}
\begin{tikzpicture}
\node at (1,3) {$1$};
\node at (2,3) {$3$};
\node at (3,3) {$2$};

\node at (1,1) {$1$};
\node at (2,1) {$2$};
\node at (3,1) {$3$};

\draw[<->] (2.5,1) -- (2.5,3);
\draw[<->] (3.5,1) -- (3.5,3);
\draw[<->] (1.5,1) -- (1.5,3);
\end{tikzpicture}.
\end{center}

Suppose we insert 1 to 132 at the first position, then we get 2143. According to the diagram, we insert 1 to 123 at the same position, so we get 2134. Thus, $\theta(2143) = 2134$. 

Now, suppose we want to insert 2. The diagram looks like:

\begin{center}
\begin{tikzpicture}
\node at (1,3) {$2$};
\node at (2,3) {$1$};
\node at (3,3) {$3$};
\node at (4,3) {$4$};

\node at (1,1) {$2$};
\node at (2,1) {$1$};
\node at (3,1) {$4$};
\node at (4,1) {$3$};

\draw[<->] (3.5,1) -- (2.5,3);
\draw[<->] (4.5,1) -- (3.5,3);
\draw[<->] (2.5,1) -- (4.5,3);

\end{tikzpicture}.
\end{center}

Suppose we insert 2 to 2134 at the second position, so we get $31245$. According to the diagram, we insert 2 to $2143$ at the last position, so we get 31542. Thus, $\theta(31245) = 31542$.

Below are enumerations of $P_{n,1 \un 2 3}(x)$ for the first few $n$.

\begin{eqnarray*}
 P_{1,1 \un 2 3}(x) &=& {1} \\
 P_{2,1 \un 2 3}(x) &=&  {2} \\
 P_{3,1 \un 2 3}(x) &=&  {5 + x} \\
 P_{4,1 \un 2 3}(x) &=&  {14 + 8 x + 2 x^2}\\
 P_{5,1 \un 2 3}(x) &=&  {42 + 46 x + 26 x^2 + 6 x^3}\\
 P_{6,1 \un 2 3}(x) &=&  {132 + 232 x + 220 x^2 + 112 x^3 + 24 x^4}\\
 P_{7,1 \un 2 3}(x) &=&  {429 + 1093 x + 1527 x^2 + 1275 x^3 + 596 x^4 + 120 x^5}\\
 P_{8,1 \un 2 3}(x) &=&  {1430 + 4944 x + 9436 x^2 + 11384 x^3 + 8638 x^4 + 3768 x^5 + 720 x^6}
\end{eqnarray*}

{\bf Remark} In fact, the equivalence of $1 \un23$ and $\un213$ follows a general result from Theorem \ref{lengthn}.

\begin{section}{Equivalence classes of $S_4^*$}\label{yoyo}
There are $4! \cdot 4 = 96$ $PMP$s in $S_4^*$. However, there are at most 16 equivalent classes by Lemma \ref{sym}. All the representatives are listed in Figure \ref{allrep4}.

\begin{figure}[h]
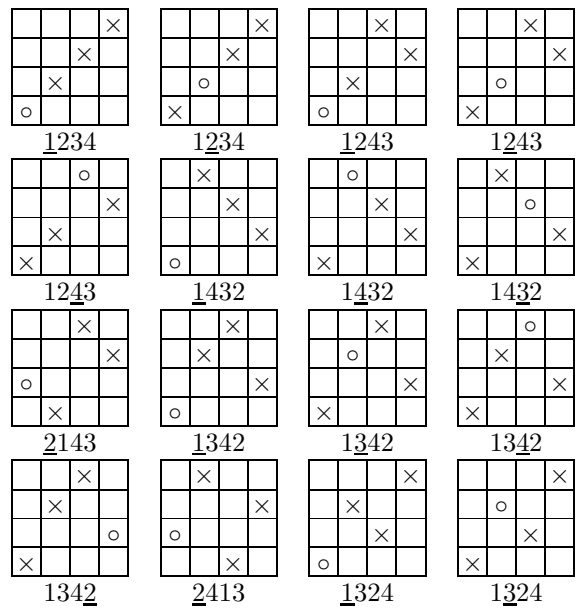

\begin{center}
\begin{tabular}{cccc}
\shortstack{$\young(\hfil \hfil \hfil \times,\hfil \hfil \times \hfil,\hfil \times \hfil \hfil,\circ \hfil \hfil \hfil )$ \\ $\un 1234 $} &
\shortstack{$\young(\hfil \hfil \hfil \times ,\hfil \hfil \times \hfil,\hfil \circ \hfil \hfil,\times \hfil \hfil \hfil )$ \\ $1\un234$} & 
\shortstack{$\young(\hfil \hfil \times \hfil  ,\hfil \hfil \hfil \times,\hfil \times \hfil \hfil,\circ \hfil \hfil \hfil )$ \\ $\un1243$} &
\shortstack{$\young(\hfil \hfil \times \hfil ,\hfil \hfil \hfil \times,\hfil \circ \hfil \hfil,\times \hfil \hfil \hfil )$ \\ $1\un243$} \\
\shortstack{$\young(\hfil \hfil \circ \hfil ,\hfil \hfil \hfil \times,\hfil \times \hfil \hfil,\times \hfil \hfil \hfil )$ \\ $12\un43$} &
\shortstack{$\young(\hfil \times \hfil \hfil  ,\hfil \hfil \times \hfil,\hfil \hfil \hfil \times,\circ \hfil \hfil \hfil )$ \\ $\un1432$} &
\shortstack{$\young(\hfil \circ \hfil \hfil  ,\hfil \hfil \times \hfil,\hfil \hfil \hfil \times,\times \hfil \hfil \hfil )$ \\ $1\un432$} &
\shortstack{$\young(\hfil \times \hfil \hfil  ,\hfil \hfil \circ \hfil,\hfil \hfil \hfil \times,\times \hfil \hfil \hfil )$ \\ $14\un32$} \\
\shortstack{$\young(\hfil \hfil \times \hfil  ,\hfil \hfil \hfil \times,\circ \hfil \hfil \hfil,\hfil \times \hfil \hfil )$ \\ $\un2143$} &
\shortstack{$\young(\hfil \hfil \times \hfil  ,\hfil \times \hfil \hfil,\hfil \hfil \hfil \times,\circ \hfil \hfil \hfil )$ \\ $\un1342$} &
\shortstack{$\young(\hfil \hfil \times \hfil  ,\hfil \circ \hfil \hfil,\hfil \hfil \hfil \times,\times \hfil \hfil \hfil )$ \\ $1\un342$} &
\shortstack{$\young(\hfil \hfil \circ \hfil  ,\hfil \times \hfil \hfil,\hfil \hfil \hfil \times,\times \hfil \hfil \hfil )$ \\ $13\un42$} \\
\shortstack{$\young(\hfil \hfil \times \hfil  ,\hfil \times \hfil \hfil,\hfil \hfil \hfil \circ,\times \hfil \hfil \hfil )$ \\ $134\un2$} &
\shortstack{$\young(\hfil \times \hfil \hfil  ,\hfil \hfil \hfil \times,\circ \hfil \hfil \hfil,\hfil \hfil \times \hfil )$ \\ $\un2413$} &
\shortstack{$\young(\hfil \hfil \hfil \times ,\hfil \times \hfil \hfil,\hfil \hfil \times \hfil,\circ \hfil \hfil \hfil )$ \\ $\un1324$} &
\shortstack{$\young(\hfil \hfil \hfil \times ,\hfil \circ \hfil \hfil,\hfil \hfil \times \hfil,\times \hfil \hfil \hfil )$ \\ $1\un324$} 
\end{tabular}
\end{center}
\label{allrep4}
\caption{Diagrams of representatives of $PMP$s of length 4 modulo geometrical equivalence}
\end{figure}

In this section, we present numerical results suggesting that there are 10 equivalent classes in $S_4^*$. We then will show that the following patterns are equivalent: $1\un234, 1\un2 43, 2\un134, 2\un1 43$.

\begin{subsection}{Numerical data}
Here, for each pattern $\tau$ shown in Figure \ref{allrep4}, we enumerate polynomials $P_{n,\tau}(x)$ for small $n$. We also group patterns together if they seem to provide the same polynomials.

{\scriptsize
\begin{subsubsection}{$\tau = \un1234, \un1243, \un1432$}

\begin{eqnarray*}
P_{1,\tau}(x) &=& 1 \\
P_{2,\tau}(x) &=& 2 \\
P_{3,\tau}(x) &=& 6 \\
P_{4,\tau}(x) &=& 23+x \\
P_{5,\tau}(x) &=& 103 + 15 x + 2 x^2 \\
P_{6,\tau}(x) &=& 513 + 158 x + 43 x^2 + 6 x^3 \\
P_{7,\tau}(x) &=& 2761 + 1466 x + 619 x^2 + 170 x^3 + 24 x^4 \\
P_{8,\tau}(x) &=& 15767 + 12864 x + 7598 x^2 + 3121 x^3 + 850 x^4 + 120 x^5 \\
\end{eqnarray*}
\end{subsubsection}

\begin{subsubsection}{$\tau = 1\un234, 1\un243, 12\un43, \un2143$}\label{4pat}
\begin{eqnarray*}
P_{1,\tau}(x) &=& 1 \\
P_{2,\tau}(x) &=& 2 \\
P_{3,\tau}(x) &=& 6 \\
P_{4,\tau}(x) &=& 23+x \\
P_{5,\tau}(x) &=& 103 + 15 x + 2 x^2 \\
P_{6,\tau}(x) &=& 513 + 157 x + 44 x^2 + 6 x^3 \\
P_{7,\tau}(x) &=& 2761 + 1439 x + 638 x^2 + 178 x^3 + 24 x^4 \\
P_{8,\tau}(x) &=& 15767 + 12420 x + 7764 x^2 + 3341 x^3 + 908 x^4 + 120 x^5
\end{eqnarray*}
\end{subsubsection}

\begin{subsubsection}{$\tau =1\un432$}
\begin{eqnarray*}
P_{1,\tau}(x) &=& 1 \\
P_{2,\tau}(x) &=& 2 \\
P_{3,\tau}(x) &=& 6 \\
P_{4,\tau}(x) &=& 23+x \\
P_{5,\tau}(x) &=& 103 + 15 x + 2 x^2 \\
P_{6,\tau}(x) &=& 513 + 157 x + 44 x^2 + 6 x^3 \\
P_{7,\tau}(x) &=& 2761 + 1438 x + 640 x^2 + 177 x^3 + 24 x^4 \\
P_{8,\tau}(x) &=& 15767 + 12393 x + 7809 x^2 + 3332 x^3 + 899 x^4 + 120 x^5
\end{eqnarray*}
\end{subsubsection}

\begin{subsubsection}{$\tau =14\un32$}
\begin{eqnarray*}
P_{1,\tau}(x) &=& 1 \\
P_{2,\tau}(x) &=& 2 \\
P_{3,\tau}(x) &=& 6 \\
P_{4,\tau}(x) &=& 23+x \\
P_{5,\tau}(x) &=& 103 + 15 x + 2 x^2 \\
P_{6,\tau}(x) &=& 513 + 156 x + 45 x^2 + 6 x^3 \\
P_{7,\tau}(x) &=& 2761 + 1415 x + 655 x^2 + 185 x^3 + 24 x^4 \\
P_{8,\tau}(x) &=& 15767 + 12058 x + 7895 x^2 + 3524 x^3 + 956 x^4 + 120 x^5
\end{eqnarray*}
\end{subsubsection}

\begin{subsubsection}{$\tau =\un1342$}
\begin{eqnarray*}
P_{1,\tau}(x) &=& 1 \\
P_{2,\tau}(x) &=& 2 \\
P_{3,\tau}(x) &=& 6 \\
P_{4,\tau}(x) &=& 23+x \\
P_{5,\tau}(x) &=& 103 + 15 x + 2 x^2 \\
P_{6,\tau}(x) &=& 512 + 160 x + 42 x^2 + 6 x^3 \\
P_{7,\tau}(x) &=& 2740 + 1500 x + 614 x^2 + 162 x^3 + 24 x^4 \\
P_{8,\tau}(x) &=& 15485 + 13207 x + 7700 x^2 + 3016 x^3 + 792 x^4 + 120 x^5
\end{eqnarray*}
\end{subsubsection}

\begin{subsubsection}{$\tau =1\un342$}
\begin{eqnarray*}
P_{1,\tau}(x) &=& 1 \\
P_{2,\tau}(x) &=& 2 \\
P_{3,\tau}(x) &=& 6 \\
P_{4,\tau}(x) &=& 23+x \\
P_{5,\tau}(x) &=& 103 + 15 x + 2 x^2 \\
P_{6,\tau}(x) &=& 512 + 158 x + 44 x^2 + 6 x^3 \\
P_{7,\tau}(x) &=& 2740 + 1451 x + 646 x^2 + 179 x^3 + 24 x^4 \\
P_{8,\tau}(x) &=& 15485 + 12455 x + 7912 x^2 + 3427 x^3 + 921 x^4 + 120 x^5
\end{eqnarray*}
\end{subsubsection}

\begin{subsubsection}{$\tau =13\un42, \un2413$}
\begin{eqnarray*}
P_{1,\tau}(x) &=& 1 \\
P_{2,\tau}(x) &=& 2 \\
P_{3,\tau}(x) &=& 6 \\
P_{4,\tau}(x) &=& 23+x \\
P_{5,\tau}(x) &=& 103 + 15 x + 2 x^2 \\
P_{6,\tau}(x) &=& 512 + 158 x + 44 x^2 + 6 x^3 \\
P_{7,\tau}(x) &=& 2740 + 1454 x + 644 x^2 + 178 x^3 + 24 x^4 \\
P_{8,\tau}(x) &=& 15485 + 12533 x + 7897 x^2 + 3377 x^3 + 908 x^4 + 120 x^5
\end{eqnarray*}
\end{subsubsection}

\begin{subsubsection}{$\tau =134\un2$}
\begin{eqnarray*}
P_{1,\tau}(x) &=& 1 \\
P_{2,\tau}(x) &=& 2 \\
P_{3,\tau}(x) &=& 6 \\
P_{4,\tau}(x) &=& 23+x \\
P_{5,\tau}(x) &=& 103 + 15 x + 2 x^2 \\
P_{6,\tau}(x) &=& 512 + 159 x + 43 x^2 + 6 x^3 \\
P_{7,\tau}(x) &=& 2740 + 1475 x + 629 x^2 + 172 x^3 + 24 x^4 \\
P_{8,\tau}(x) &=& 15485 + 12817 x + 7781 x^2 + 3244 x^3 + 873 x^4 + 120 x^5
\end{eqnarray*}
\end{subsubsection}

\begin{subsubsection}{$\tau =\un1324$}
\begin{eqnarray*}
P_{1,\tau}(x) &=& 1 \\
P_{2,\tau}(x) &=& 2 \\
P_{3,\tau}(x) &=& 6 \\
P_{4,\tau}(x) &=& 23+x \\
P_{5,\tau}(x) &=& 103 + 15 x + 2 x^2 \\
P_{6,\tau}(x) &=& 513 + 158 x + 43 x^2 + 6 x^3 \\
P_{7,\tau}(x) &=& 2762 + 1464 x + 620 x^2 + 170 x^3 + 24 x^4 \\
P_{8,\tau}(x) &=& 15793 + 12820 x + 7608 x^2 + 3129 x^3 + 850 x^4 + 120 x^5
\end{eqnarray*}
\end{subsubsection}

\begin{subsubsection}{$\tau =1\un324$}
\begin{eqnarray*}
P_{1,\tau}(x) &=& 1 \\
P_{2,\tau}(x) &=& 2 \\
P_{3,\tau}(x) &=& 6 \\
P_{4,\tau}(x) &=& 23+x \\
P_{5,\tau}(x) &=& 103 + 15 x + 2 x^2 \\
P_{6,\tau}(x) &=& 513 + 156 x + 45 x^2 + 6 x^3 \\
P_{7,\tau}(x) &=& 2762 + 1414 x + 654 x^2 + 186 x^3 + 24 x^4 \\
P_{8,\tau}(x) &=& 15793 + 12041 x + 7861 x^2 + 3539 x^3 + 966 x^4 + 120 x^5
\end{eqnarray*}

\end{subsubsection}
}

\end{subsection}

\begin{subsection}{Equivalence of $1\un234, 1\un243, \un2134$ and $\un2 1 4 3$}

Here, we prove an equivalence of four patterns: $1\un234, 1\un243, \un2134$ and $\un2 1 4 3$. Note that, they are equivalent to four patterns appearing in Section \ref{4pat}, since $\un 2134$ and $12\un43$ are equivalent.

\begin{center}
\young(\hfill \hfil \hfill \times
,\hfill \hfill \times \hfil
,\circ \hfill \hfill \hfil
,\hfil \times \hfil \hfil)
\young(\hfill \hfil \circ \hfill
,\hfill \hfill \hfil \times
,\hfil \times \hfill \hfill
,\times \hfil \hfil \hfil)

Diagrams of $\un2134$ and $12 \un43$
\end{center}

The equivalence of $1\un234$ and $\un2134$ as well as the equivalence of $1\un243$ and $\un2143$ follow a more general Theorem \ref{lengthn}. Thus, in this section, we only prove the equivalence of $1\un234$ and $1\un243$.

\begin{theorem}\label{len4}
Two $PMP$s $1\un234$ and $1\un243$ are $PMP$-Wilf-equivalent.
\end{theorem}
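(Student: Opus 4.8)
The plan is to reformulate both statistics combinatorially and then argue by induction on $n$, peeling off the smallest entries, in the spirit of the proofs of Theorems \ref{threeone} and \ref{threetwo}. Unwinding the definitions, $\sigma$ has a $1\un234$-match at position $i$ exactly when (a) some $\sigma_j<\sigma_i$ with $j<i$, and (b) there are $c<d$ with $i<c<d$ and $\sigma_i<\sigma_c<\sigma_d$; that is, $\sigma_i$ is not a left-to-right minimum and the subword of entries lying to the right of $i$ and above $\sigma_i$ contains an ascent. The statistic $pmp_{1\un243}$ is the same except that in (b) we require $\sigma_i<\sigma_d<\sigma_c$, i.e.\ that same subword contains a \emph{descent}. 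So the two statistics differ only in exchanging ``ascent'' for ``descent'' in a suffix-above subword, and the goal is to show this exchange preserves the distribution over $S_n$.

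Next I would fix $k$ to be the position of the entry $1$ and split according to whether the entry $2$ lies to the left or right of $1$. If $2$ is to the left of $1$, then deleting the entry $1$ and standardizing changes no $\tau$-match at any position for either $\tau$: the entry $1$ can only play the leading ``$1$'' of the pattern, every position to the left of $1$ is unaffected, and every position to the right of $1$ still has the entry $2$ as a smaller entry to its left; this contributes $\sum_{p<k}P_{n-1,\tau,p}(x)$ for both $\tau=1\un234$ and $\tau=1\un243$. If $2$ is to the right of $1$, at position $p$, then deleting the entry $2$ and standardizing changes only the match status of position $p$ itself (the entry $2$ is never ``larger'' than any other relevant entry, and the entry $1$ already witnesses condition (a) for every position to the right of $p$); position $p$ is a $1\un234$-match iff the suffix $\sigma_{p+1}\cdots\sigma_n$ has an ascent, equivalently iff $p\le j^\ast$ where $j^\ast$ is the position of the last ascent of the deleted permutation, and a $1\un243$-match iff that suffix has a descent, i.e.\ iff $p\le d^\ast$, with $d^\ast$ the position of the last descent.

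Consequently I would introduce doubly refined generating functions: $P_{n,1\un234,k,j}(x)=\sum x^{pmp_{1\un234}(\sigma)}$ over $\sigma\in S_n$ with the entry $1$ in position $k$ and last ascent in position $j$, and $P_{n,1\un243,k,j}(x)$ defined the same way but with last \emph{descent} in position $j$. The computation above yields a recursion for each; the plan is to check that the recursion for $P_{n,1\un234,k,j}$ expressed through the ``last ascent'' statistic coincides with the recursion for $P_{n,1\un243,k,j}$ expressed through the ``last descent'' statistic, with matching base cases, so that $P_{n,1\un234,k,j}(x)=P_{n,1\un243,k,j}(x)$ for all $n,k,j$ by induction; summing over $j$ and then over $k$ proves the theorem. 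As in the earlier sections one can then unwind the recursion into an explicit bijection $\theta\colon S_n\to S_n$ with $pmp_{1\un234}(\sigma)=pmp_{1\un243}(\theta(\sigma))$; on the portion of $\sigma$ lying to the right of the entry $1$ this bijection is essentially the map of Theorem \ref{threeone} interchanging $\un123$- and $\un132$-matches, since exactly those matches govern the positions to the right of $1$.

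The step I expect to be the main obstacle is tracking how the position of the last ascent (resp.\ last descent) transforms when the entry $1$ and the entry $2$ are reinserted. Inserting the entry $2$ at position $p$ (with $p$ beyond the position of $1$) creates an ascent at position $p$ itself, which is what makes the bookkeeping close for $1\un234$; but it creates a descent just \emph{before} it, at position $p-1$, which is what is relevant for $1\un243$. Hence the two recursions are not literally identical: they differ by this index shift together with the usual boundary cases ($p=n$, $k=n-1$, and no ascent/descent at all). Showing that this offset is absorbed so that the refined polynomials still agree term by term is the heart of the argument; the remaining bookkeeping is the routine kind of case analysis already carried out in Lemma \ref{firstcase}.
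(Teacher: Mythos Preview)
Your proposal is essentially the paper's own proof: the paper also introduces the doubly refined polynomials $P_{n,\tau,s,t}(x)$ (with $s$ the position of the last ascent for $\tau_1=1\un234$, the last descent for $\tau_2=1\un243$, and $t$ the position of $1$), derives recursions by case analysis on the position of $2$, and shows that the recursions coincide up to exactly the boundary index shift you flag. One small correction: the doubly refined polynomials do \emph{not} agree for all $(k,j)$, since the feasible index sets differ (for $\tau_1$ the pair $t=s$ is allowed and $t=n$ can occur, while for $\tau_2$ the boundary case is $t=s+1$); what the paper actually proves is $P_{n,\tau_1,s,t}=P_{n,\tau_2,s,t}$ for $1\le t<s$ together with $P_{n,\tau_1,s,s}+P_{n,\tau_1,s,n}=P_{n,\tau_2,s,s+1}$, and it is this regrouping that absorbs the offset and yields $P_{n,\tau_1,s}=P_{n,\tau_2,s}$.
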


For our convenience, let $\tau_1 = 1 \un234$. Let $P_{n,\tau_1,s}(x) = \sum_{\sigma} x^{pmp_{\tau_1}(\sigma)}$, where the sum is over all $\sigma \in S_n$ with the last ascent of $\sigma$ at position $s$. Let $P_{n,\tau_1,s,t}(x) = \sum_{\sigma}x^{pmp_{\tau_1}(\sigma)}$, where the sum is over all $\sigma \in S_n$ with the last ascent of $\sigma$ at position $s$ and 1 at position $t$. All feasible values of $s$ are $1,2,\ldots, n-1$. All feasible values of $t$ are $1,2,\ldots,s,n$ for $s \neq n-1$, and $1,2,\ldots,n-1$ for $s=n-1$. Thus, 
$$P_{n,\tau_1}(x) = 1+ \sum_{s=1}^{n-1} P_{n,\tau_1,s}(x)$$
and also, 
\begin{equation}\label{start_t1} P_{n,\tau_1,s}(x) = \left ( \sum_{t=1}^{s} P_{n,\tau_1,s,t}(x) \right ) + P_{n,\tau_1,s,n}(x).
\end{equation}

Note that the second term vanishes as $s = n-1$. Here, we derive a recursive formula for $P_{n,\tau_1,s,t}(x)$.

\begin{lemma}\label{recur_t1} $P_{n,\tau_1,s,t}(x)$ satisfies the following recursive formulas:
$$P_{n,\tau_1,s,t}(x) = \sum_{\ell=1}^{t-1} P_{n-1,\tau_1,s-1,\ell}(x) + (s-1-t)xP_{n-1,\tau_1,s-1,t}(x) + \sum_{j=t+1}^{s}P_{n-1,\tau_1,j,t}(x) + P_{n-1,\tau_1,t,t}(x) $$
for $1\le t<s \le n-1, $

$$P_{n,\tau_1,s,s}(x) = 1+ \sum_{i=1}^{s-1}P_{n-1,\tau_1,i}(x)$$
for $1 \le s \le n-1$, and

$$P_{n,\tau_1,s,n}(x) = P_{n-1,\tau_1,s}(x)$$
for $1 \le s < n-1$.

By convention, Let $P_{n,\tau_1,s,t}(x) = 0$ for infeasible value of $s,t$.
\end{lemma}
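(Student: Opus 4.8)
The plan is to derive the recursion for $P_{n,\tau_1,s,t}(x)$ by the same ``remove a small element'' strategy used throughout Section~\ref{len3}, now tracking two statistics (the position $s$ of the last ascent and the position $t$ of the value $1$). Fix $\sigma \in S_n$ with last ascent at $s$ and $\sigma_t = 1$, and consider what happens when we delete an element and reduce. The natural element to delete depends on whether $t<s$, $t=s$, or $t=n$. The key observation about the pattern $\tau_1 = 1\un234$ is that the underlined element (playing the role of ``$2$'') must be followed by an ascent to two larger elements, so $1$ itself can never be a $\tau_1$-match position, and deleting $1$ never destroys or creates a $\tau_1$-match at any \emph{other} position, because $1$ is smaller than everything and so can only ever play the role of the leading ``$1$'' in the pattern, which does not constrain matches at positions to its right and is irrelevant for matches at positions to its left. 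This is exactly the ``removing $1$ does not affect $\tau_1$-matches'' lemma already established for $1\un23$ in Lemma~\ref{firstcase}, extended to length $4$ verbatim.

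The three displayed formulas then come from the following case analysis. If $t<s$ (so $1$ sits strictly left of the last ascent), delete $1$: the resulting permutation in $S_{n-1}$ has $1$ now at position $t$ wherever it lands, but the position of the last ascent can move. Deleting a letter strictly before position $s$ shifts the last ascent to position $s-1$ if the ascent at $s$ survives, but the descent run $\sigma_{s+1}>\dots>\sigma_n$ could also have its left end become a new last ascent; carefully, one finds contributions $\sum_{\ell=1}^{t-1}P_{n-1,\tau_1,s-1,\ell}$ (when the deleted $1$ was not at position $s-1$ of the reduced word, i.e. the new position of the old second-smallest element), the term $(s-1-t)xP_{n-1,\tau_1,s-1,t}$ accounting for the $s-1-t$ positions strictly between $1$ and the last ascent at which $1,\sigma_s,\sigma_{s+1}$ create a $\tau_1$-match (each such position contributes a factor $x$, and the match count is otherwise unchanged), and the terms $\sum_{j=t+1}^{s}P_{n-1,\tau_1,j,t} + P_{n-1,\tau_1,t,t}$ covering the remaining distribution of where the last ascent of the reduced permutation ends up. If $t=s$, then $\sigma_s = 1 < \sigma_{s+1}$ is forced, $\sigma$ has no $\tau_1$-match at position $s$, and deleting $1$ leaves a permutation whose last ascent is at some position $i\le s-1$ or which has no ascent at all, giving $1+\sum_{i=1}^{s-1}P_{n-1,\tau_1,i}$. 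If $t=n$, then $\sigma_n = 1$ lies in the descent run, deleting it just removes the tail and the last ascent stays at $s$, giving $P_{n-1,\tau_1,s}$.

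The step I expect to be the main obstacle is the bookkeeping in the first case, $1\le t<s$: one must simultaneously verify that (a) the number of new $\tau_1$-matches created by having $1$ present is exactly $s-1-t$ — these are precisely the positions $r$ with $t<r<s$, since for such $r$ the triple $\sigma_r,\sigma_s,\sigma_{s+1}$ together with the leading $1$ at position $t<r$ forms a $1\un234$-occurrence, and positions $r\ge s$ cannot be underlined elements followed by two ascending larger values within the descent run — and (b) the position of the last ascent of the reduced permutation lands in the claimed ranges, which requires a subcase split on whether the original ascent at $s$ persists after deletion and on where $1$ sat relative to the element that becomes the new ``almost-last'' landmark. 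I would handle (b) by noting that deleting position $t<s$ from $\sigma$ preserves every ascent/descent comparison except those involving $\sigma_t=1$; since $1$ is a strict minimum, $\sigma_{t-1}>1<\sigma_{t+1}$ always (a descent then ascent), so removing it merges these into a single comparison $\sigma_{t-1}$ vs $\sigma_{t+1}$, which can be either — this is exactly what produces the split between the $P_{n-1,\tau_1,s-1,\cdot}$ terms and the $\sum_{j}P_{n-1,\tau_1,j,t}$ terms. Finally I would check the boundary conventions ($s=n-1$ killing the $t=n$ branch, $P=0$ for infeasible indices) make each displayed identity literally correct, exactly as in the analogous remarks after the earlier lemmas.
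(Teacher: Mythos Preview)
Your handling of the cases $t=s$ and $t=n$ is correct and matches the paper. The problem is entirely in the main case $1\le t<s$, where you propose to delete the value $1$ regardless of where $2$ sits.

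The claim that deleting $1$ ``never destroys a $\tau_1$-match at any other position'' is false, and it is \emph{not} what Lemma~\ref{firstcase} says. Take $\sigma = 1\,3\,2\,4\,5$, with $t=1$ and $s=4$: position $2$ carries a $1\un234$-match via $1,3,4,5$, but after deleting $1$ and reducing to $2\,1\,3\,4$ there is nothing smaller than the first entry, so the match is gone. In general, whenever $1$ is the unique element below $\sigma_r$ lying left of $r$ (for instance at the position of $2$, whenever $2$ sits to the right of $1$), deleting $1$ kills that match. Your point (a) also fails: $1,\sigma_r,\sigma_s,\sigma_{s+1}$ need not reduce to $1234$, since nothing forces $\sigma_r<\sigma_s$. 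And point (b) cannot explain the range of $j$ in $\sum_j P_{n-1,\tau_1,j,t}$: deleting position $t<s$ always shifts the last ascent to exactly $s-1$, because the new adjacency $\sigma_{t-1}\sigma_{t+1}$ occurs strictly before $s-1$ and the original ascent at $s$ survives.

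What the paper actually does (and what Lemma~\ref{firstcase} actually does) is case on the position $\ell$ of the value $2$, deleting $1$ only when $\ell<t$ and deleting $2$ when $\ell>t$. The term $(s-1-t)x\,P_{n-1,\tau_1,s-1,t}$ comes from the $s-1-t$ choices $t<\ell<s$: here $1,2,\sigma_s,\sigma_{s+1}$ \emph{is} a guaranteed $1\un234$-occurrence (since $2<\sigma_s$ automatically), removing $2$ drops $pmp_{\tau_1}$ by exactly one, and both the position of $1$ and the last ascent (shifted to $s-1$) are pinned down. The sum over $j$ comes from the case $\ell=s$: removing $2=\sigma_s$ leaves $1$ at $t$ but the last ascent can land anywhere in $\{t,\dots,s-1\}$, since the tail $\sigma_{s+1}>\cdots>\sigma_n$ stays descending and we no longer know whether $\sigma_{s-1}<\sigma_{s+1}$. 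The final term is the case $\ell=n$. So the missing idea is precisely the one already present in Lemma~\ref{firstcase}: you must choose which of $1$ or $2$ to delete based on their relative order, not always delete $1$.
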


\begin{proof}
First, consider the formula for $P_{n,\tau_1,s,t}(x)$ when $1\le t <s \le n-1.$ Let $\ell$ be the position of $2$ in $\sigma$, where $\sigma\in S_n$ with the last ascent of $\sigma$ at position $s$ and 1 at position $t$. We have 4 cases:

\begin{enumerate}
\item $1 \le \ell \le t-1$. In this case, we can remove $1$ from $\sigma$ without effecting a $\tau_1$-match at other positions. That is because $\sigma$ does not have a $\tau_1$-match at the position of $2$, and if $\sigma$ has a $\tau_1$-match at other positions and matching involves 1, it can still have a $\tau_1$-match by using 2 instead of 1.

After removing and reducing, the position of 1 is $\ell$, and the position of the last ascent is $s-1$. Thus, this case contributes $\sum_{\ell=1}^{t-1}P_{n-1,\tau_1,s-1,\ell}(x)$.

\item $t+1 \le \ell \le s-1$. In this case, $\sigma$ has a $\tau_1$-match at position $\ell$ since $1 ~ 2 ~\sigma_s ~ \sigma_{s+1}$ form the pattern $\tau_1$. By the same reasoning as the first case, we can remove 2 without effecting $\tau_1$-match at any other positions.

After removing and reducing, the position of $1$ stays at $t$ and the position of the last ascent is $s-1$. Thus, this case contributes $x(s-1-t)P_{n-1,\tau_1,s-1,t}(x)$.

\item $\ell = s$. In this case, we can remove 2 without effecting a $\tau_1$-match at other positions.

After removing and reducing, the position of $1$ stays at $t$ and the last ascent appears at some position between $t$ and $s-1$. Thus, this case contributes $\sum_{j=t}^{s-1}P_{n-1,\tau_1,j,t}(x)$.

\item $\ell = n$. In this case, we can remove 2 without effecting a $\tau_1$-match at other positions. After removing and reducing, the position of 1 stays at $t$ and the position of the last ascent stays at $s$. Thus, this case contributes $P_{n-1,\tau_1,s,t}(x)$.
\end{enumerate}

In total, we have

\begin{eqnarray*}
P_{n,\tau_1,s,t}(x) &=& \sum_{\ell=1}^{t-1} P_{n-1,\tau_1,s-1,\ell}(x) + (s-1-t)xP_{n-1,\tau_1,s-1,t}(x) + \sum_{j=t}^{s-1} P_{n-1,\tau_1,j,t}(x) + P_{n-1,\tau_1,s,t}(x) \\
&=& \sum_{\ell=1}^{t-1} P_{n-1,\tau_1,s-1,\ell}(x) + (s-1-t)xP_{n-1,\tau_1,s-1,t}(x) + \sum_{j=t+1}^{s} P_{n-1,\tau_1,j,t}(x) + P_{n-1,\tau_1,t,t}(x).
\end{eqnarray*}

Note that, the first case vanishes if $t=1$, but the formula is consistent as it contributes an empty summation. The second case vanishes when $t = s-1$, but the formula is still correct as $P_{n-1,\tau_1,s-1,s}(x) = 0$. The last case vanishes when $s = n-1$, but the formula is correct as $P_{n-1,\tau_1,n-1,t}(x)=0$. Thus, we prove the first formula.

Here, we derive a recursive formula for $P_{n,\tau_1,s,s}(x)$ when $1\le s\le n-1$. Given any $\sigma\in S_n$ such that the position of 1 and the position of the last ascent is $s$. Removing 1 does not effect a $\tau_1$-match at other positions. After removing and reducing, the permutation either has no ascent, or the last ascent is at some position between 1 and $s-1$. Thus, we have

$$P_{n,\tau_1,s,s}(x) = 1 + \sum_{i=1}^{s-1}P_{n-1,\tau_1,i}(x).$$

Lastly, we derive a formula for $P_{n,\tau_1,s,n}(x)$ when $1 \le s < n-1$. Given any $\sigma$ such that the position of the last ascent is $s$ and 1 is at the last position. Removing 1 will not effect $\tau_1$-match at other positions. The position of the last ascent is still $s$. Thus, we have

$$P_{n,\tau_1,s,n}(x) = P_{n-1,\tau_1,s}(x).$$

\end{proof}

We shall prove similar result for $1\un243$. Let $\tau_2 = 1 \un2 43$. Let $P_{n,\tau_2,s}(x) = \sum_{\sigma} x^{pmp_{\tau_2}(\sigma)}$, where the sum is over all $\sigma \in S_n$ with the {\em last descent} of $\sigma$ at position $s$. Let $P_{n,\tau_2,s,t}(x) = \sum_{\sigma}x^{pmp_{\tau_2}(\sigma)}$, where the sum is over all $\sigma \in S_n$ with the last descent of $\sigma$ at position $s$ and 1 at position $t$. All feasible values of $s$ are $1,2,\ldots, n-1$. All feasible values of $t$ are $t = 1,2, \ldots, s-1$ and $s+1$. Note that, when $s=1$, the only possible value for $t$ is 2. Thus, we have

$$P_{n,\tau_2}(x) = 1+ \sum_{s=1}^{n-1} P_{n,\tau_2,s}(x).$$

Also, 

\begin{equation}\label{start_t2}
P_{n,\tau_2,s}(x) = \left ( \sum_{t=1}^{s-1} P_{n,\tau_2,s,t}(x) \right ) + P_{n,\tau_2,s,s+1}(x).
\end{equation}

Here, we derive a recursive formula for $P_{n,\tau_2,s}(x)$.

\begin{lemma}\label{recur_t2}  $P_{n,\tau_2,s,t}(x)$ satisfies the following recursive formulas:
$$P_{n,\tau_2,s,t}(x) = \sum_{\ell=1}^{t-1} P_{n-1,\tau_2,s-1,\ell}(x) + (s-1-t)xP_{n-1,\tau_2,s-1,t}(x) + \sum_{j=t+1}^{s}P_{n-1,\tau_2,j,t}(x) + P_{n-1,\tau_2,t-1,t}(x).$$
for $1\le t<s \le n-1, $

$$P_{n,\tau_2,s,s+1}(x) = 1+ \sum_{i=1}^{s}P_{n-1,\tau_2,i}(x)$$
for $1 \le s \le n-1$, and

By convention, let $P_{n,\tau_2,s,t}(x) = 0$ for infeasible value of $s,t$.
\end{lemma}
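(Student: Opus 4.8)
\emph{Proof plan.} The plan is to follow the proof of Lemma~\ref{recur_t1} essentially verbatim: $\tau_2 = 1\un243$ differs from $\tau_1 = 1\un234$ only in its last two letters, exactly the place where the ``last ascent'' of the $\tau_1$-analysis turns into the ``last descent'' of the $\tau_2$-analysis. Fix $\sigma \in S_n$ with last descent at $s$ and the value $1$ at position $t$, so $\sigma_s > \sigma_{s+1} < \sigma_{s+2} < \cdots < \sigma_n$ and $\sigma_t = 1$. First I would record the structural facts that make the deletions clean: the value $2$ is too small to be the $3$ or the $4$ of a $1243$-occurrence, so it can only be a leading $1$ or the underlined $2$; it can be the underlined $2$ only through the occurrence $1,2,\sigma_s,\sigma_{s+1}$, which forces it to sit strictly left of position $s$; and at the valley position $s+1$ the value $2$, and a fortiori the value $1$, can only be a leading $1$, since everything to its right is increasing. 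Hence, in every situation below, deleting the value $1$ leaves $pmp_{\tau_2}$ unchanged (the value $2$ lies to its left and stands in as leading $1$), deleting the value $2$ from position $s+1$ leaves $pmp_{\tau_2}$ unchanged (a match using it has underlined position $>s+1>t$, so the value $1$ stands in), and deleting the value $2$ from a position in $\{t+1,\dots,s-1\}$ drops $pmp_{\tau_2}$ by exactly $1$, namely the match at that position.

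For the formula with $1\le t<s\le n-1$ I would split on the position $\ell$ of the value $2$. The only feasible cases are $\ell\in\{1,\dots,t-1\}$, $\ell\in\{t+1,\dots,s-1\}$, and $\ell=s+1$: $\ell=s$ would force $\sigma_{s+1}=1$, hence $t=s+1$, impossible; and $\ell>s+1$ is impossible because the value $1$ already lies left of position $s$, so the value $2$ could only head the increasing block, i.e.\ sit at $s+1$. The first two cases parallel the first two cases of Lemma~\ref{recur_t1}: deleting the value $1$ (resp.\ $2$) and reducing sends the last descent to $s-1$ and puts $1$ at position $\ell$ (resp.\ leaves it at $t$), yielding the summands $\sum_{\ell=1}^{t-1}P_{n-1,\tau_2,s-1,\ell}(x)$ and $(s-1-t)xP_{n-1,\tau_2,s-1,t}(x)$. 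The third case, $\ell=s+1$, is the crux. Deleting the valley value $2$ is $pmp_{\tau_2}$-neutral and keeps $1$ at position $t$, but one must locate the last descent of the reduced permutation $\sigma'$: if $\sigma_s>\sigma_{s+2}$ the descent at $s$ survives and, the tail being increasing, $\sigma'$ has last descent $s$; if $\sigma_s<\sigma_{s+2}$ that descent collapses and the last descent of $\sigma'$ is the last descent of $\sigma$ among positions $1,\dots,s-1$, which lies in $\{t-1\}\cup\{t+1,\dots,s-1\}$ because $\sigma_{t-1}>1=\sigma_t$ forces a descent at $t-1$ while $\sigma_t=1<\sigma_{t+1}$ forbids one at $t$. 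Re-inserting $2$ at position $s+1$ (and incrementing) shows each target arises exactly once, so this case contributes $P_{n-1,\tau_2,t-1,t}(x)+\sum_{j=t+1}^{s}P_{n-1,\tau_2,j,t}(x)$; summing the three cases gives the first displayed formula.

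The second formula, for $t=s+1$, is the easy case: $\sigma$ carries the value $1$ at the valley $\sigma_{s+1}$, which lies in no $\tau_2$-match, so deleting it is $pmp_{\tau_2}$-neutral, and by the same valley analysis the last descent of the reduced permutation is $s$ when $\sigma_s>\sigma_{s+2}$ and otherwise the last descent of $\sigma$ among $1,\dots,s$, so it runs over $1,\dots,s$ or is absent; re-inserting $1$ at position $s+1$ is the inverse, giving $P_{n,\tau_2,s,s+1}(x)=1+\sum_{i=1}^{s}P_{n-1,\tau_2,i}(x)$ with the lone $1$ recording the descent-free permutation. I would finish by clearing the boundary degeneracies ($t=1$, $t=s-1$, $s=n-1$) against the zero conventions, noting the one genuine subtlety: when $t=1$ there is no position $t-1$ and $\sigma'$ in the case $\ell=s+1$ may itself be descent-free, so $P_{n-1,\tau_2,0,1}(x)$ there has to be read as its contribution $1$ (equivalently, a last descent ``at position $0$'' means no descent). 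The main obstacle in the whole argument is precisely this $\ell=s+1$ bookkeeping --- verifying that the new last descent ranges over exactly $\{t-1\}\cup\{t+1,\dots,s\}$, with the gap at $t$, and that removing and restoring the valley $2$ is a $pmp_{\tau_2}$-preserving bijection.
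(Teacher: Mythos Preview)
Your argument is correct and follows the same case analysis on the position $\ell$ of the value $2$ that the paper uses. You are in fact more careful than the paper on two points: you correctly label the third case as $\ell = s+1$ (the paper writes $\ell = s$, which is infeasible since $\sigma_s = 2$ would force $\sigma_{s+1}=1$ at position $s+1 \neq t$), and you flag the $t=1$ boundary, where the term $P_{n-1,\tau_2,0,1}(x)$ must be read as $1$ (the descent-free identity) rather than $0$---a subtlety the paper's stated zero-convention does not handle cleanly but which is needed for the subsequent identification $P_{n,\tau_1,t,t}(x)=P_{n,\tau_2,t-1,t}(x)$ at $t=1$.
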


\begin{proof}
First, consider the formula for $P_{n,\tau_2,s,t}(x)$ when $1 \le t < s \le n-1$. Let $\ell$ be the position of 2 in $\sigma$, where $\sigma \in S_n$ with the last descent of $\sigma$ at position $s$ and 1 at position $t$. we have 4 cases:

\begin{enumerate}
\item $1 \le \ell \le t-1$. In this case, $\sigma$ does not have a $\tau_2$-match at position $\ell$. We can remove 1 from $\sigma$ without effecting $\tau_2$-match at other positions. That is because if $\sigma$ has a $\tau_2$-match at other positions and the matching involves 1, it still has a $\tau_2$-match by using $2$ instead of $1$.

After removing and reducing, the position of $1$ is $\ell$ and the position of the last descent is $s-1$. Thus, this case contributes $\sum_{\ell=1}^{t-1}P_{n-1,\tau_2,s-1,\ell}(x)$.

\item $t+1 \le \ell \le s-1$. In this case, $\sigma$ has a $\tau_2$-match at position $\ell$ since $1 ~ 2 ~\sigma_s ~ \sigma_{s+1}$ form the pattern $\tau_2$. By the same reasoning as the first case, we can remove 2 without effecting $\tau_2$-match at any other positions.

After removing and reducing, the position of $1$ stays at $t$ and the position of the last descent is $s-1$. Thus, this case contributes $x(s-1-t)P_{n-1,\tau_1,s-1,t}(x)$.

\item $\ell = s$. In this case, we can remove 2 without effecting $\tau_2$-match at other positions.

After removing and reducing, the position of $1$ stays at $t$ and the last descent appears at some position between $t+1$ and $s$, or at the position $t-1$ (right before 1). Thus, this case contributes $\sum_{j=t+1}^{s}P_{n-1,\tau_2,j,t}(x) + P_{n-1,\tau_2,t-1,t}(x)$.

\end{enumerate}

In total, we have

$$P_{n,\tau_2,s,t}(x) = \sum_{\ell=1}^{t-1} P_{n-1,\tau_2,s-1,\ell}(x) + (s-1-t)xP_{n-1,\tau_2,s-1,t}(x) + \sum_{j=t+1}^{s}P_{n-1,\tau_2,j,t}(x) + P_{n-1,\tau_2,t-1,t}(x).$$

Note that, the first case vanishes if $t=1$, but the formula is consistent as it contributes an empty summation. The second case vanishes when $t = s-1$, but the formula is still correct as $P_{n-1,\tau_2,s-1,s}(x) = 0$. The last case vanishes when $s = n-1$, but the formula is correct as $P_{n-1,\tau_2,n-1,t}(x)=0$. Thus, we prove the first formula.

Next, we derive the recursive formula for $P_{n,\tau_2,s,s+1}(x)$ for $1 \le s \le n-1$. Given any $\sigma \in S_n$ such that the position of the last descent is $s$ and the position of 1 is $s+1$. Removing 1 will not effect $\tau_2$-matching at other positions. After removing and reducing, the remaining permutation either has no descent, or the last descent is at some position between 1 and $s$. Thus, we have

$$P_{n,\tau_2,s,s+1}(x) = 1+ \sum_{i=1}^{s}P_{n-1,\tau_2,i}(x).$$

Thus, we prove the second formula.

\end{proof}

Here, we prove equality of $P_{n,\tau_1}(x)$ and $P_{n,\tau_2}(x)$. Note that since $P_{n,\tau_1}(x) = 1+\sum_{s=1}^{n-1} P_{n,\tau_1,s}(x)$ and $P_{n,\tau_2}(x) = 1+\sum_{s=1}^{n-1} P_{n,\tau_2,s}(x)$, it is enough to show that $P_{n,\tau_1,s}(x) = P_{n,\tau_2,s}(x)$ for $1\le s \le n-1$.

\begin{lemma}
$P_{n,\tau_1,s}(x) = P_{n,\tau_2,s}(x)$ for $1\le s \le n-1$.

\end{lemma}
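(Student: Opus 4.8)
The goal is to show $P_{n,\tau_1,s}(x) = P_{n,\tau_2,s}(x)$ for all $1\le s\le n-1$, where $\tau_1 = 1\un234$ and $\tau_2 = 1\un243$. Since both $P_{n,\tau_1}(x)$ and $P_{n,\tau_2}(x)$ split as $1 + \sum_{s=1}^{n-1} P_{n,\bullet,s}(x)$, this statement immediately gives Theorem~\ref{len4}. The natural strategy is induction on $n$, using the recursive formulas in Lemma~\ref{recur_t1} and Lemma~\ref{recur_t2} together with the ``starting'' decompositions \eqref{start_t1} and \eqref{start_t2}. The difficulty is that the refinement by the position $t$ of $1$ is done differently for the two patterns ($t$ ranges over $1,\dots,s,n$ for $\tau_1$ but over $1,\dots,s-1,s+1$ for $\tau_2$), so we cannot hope for a term-by-term identity $P_{n,\tau_1,s,t}=P_{n,\tau_2,s,t}$; we must sum over $t$ first.

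\medskip

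The plan is to carry out the following steps. First, establish the base case: check directly that $P_{n,\tau_1,s}(x) = P_{n,\tau_2,s}(x)$ for small $n$ (say $n\le 4$, which can be read off or computed by hand), and in particular that $P_{n,\tau_1}(x)=P_{n,\tau_2}(x)$ there. Second, assume inductively that $P_{m,\tau_1}(x)=P_{m,\tau_2}(x)$ and $P_{m,\tau_1,s}(x)=P_{m,\tau_2,s}(x)$ for all $m<n$ and all feasible $s$. Third, for $\tau_1$, substitute the three formulas of Lemma~\ref{recur_t1} into \eqref{start_t1} to get an expression for $P_{n,\tau_1,s}(x)$ purely in terms of the quantities $P_{n-1,\tau_1,j}(x)$ and $P_{n-1,\tau_1,j,t}(x)$; do the same for $\tau_2$ via Lemma~\ref{recur_t2} and \eqref{start_t2}. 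Fourth, compare the two resulting expressions, using the inductive hypothesis to replace every $P_{n-1,\tau_1,j}(x)$ by $P_{n-1,\tau_2,j}(x)$, and show the remaining difference telescopes to zero after reindexing the double sums.

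\medskip

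The crux is step four: one must show that
\[
\sum_{t=1}^{s}\Bigl(\sum_{\ell=1}^{t-1}P_{n-1,\tau_1,s-1,\ell} + (s-1-t)xP_{n-1,\tau_1,s-1,t} + \sum_{j=t+1}^{s}P_{n-1,\tau_1,j,t} + P_{n-1,\tau_1,t,t}\Bigr) + P_{n-1,\tau_1,s}
\]
equals the analogous sum built from Lemma~\ref{recur_t2}, the only structural differences being the last term inside the $t$-sum ($P_{n-1,\tau_1,t,t}$ versus $P_{n-1,\tau_2,t-1,t}$) and the extra ``$t=n$'' versus ``$t=s+1$'' boundary term. I expect these discrepancies to cancel: the term $P_{n-1,\tau_1,s,n}(x)=P_{n-2,\tau_1,s}(x)$ coming from \eqref{start_t1}, the diagonal terms $P_{n-1,\tau_1,t,t}$, and the boundary term $P_{n-1,\tau_2,s,s+1}$ should combine so that, after applying the inductive identities $P_{n-1,\tau_1,j,j}=1+\sum_{i=1}^{j-1}P_{n-2,\tau_1,i}$ and the corresponding one for $\tau_2$, both sides reduce to the same polynomial in the $P_{n-2,\bullet,i}(x)$.

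\medskip

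The main obstacle will be bookkeeping: correctly tracking the shifted index in $P_{n-1,\tau_2,t-1,t}$ (note that for $\tau_2$ the position of the last descent can land just before the removed $1$, at $t-1$, a phenomenon with no analogue for $\tau_1$) and making sure the ranges of summation — especially the degenerate cases $t=1$, $t=s-1$, $t=s$, and $s=n-1$ flagged in the two lemmas — are handled uniformly by the convention that infeasible $P$'s vanish. I would organize the cancellation by first proving the auxiliary identity that $\sum_{t=1}^{s} P_{n-1,\tau_1,t,t}(x) + P_{n-2,\tau_1,s}(x) = \sum_{t=1}^{s-1} P_{n-1,\tau_2,t-1,t}(x) + P_{n-1,\tau_2,s,s+1}(x)$ under the inductive hypothesis, reducing both sides to $\sum_{t=1}^{s}\bigl(1+\sum_{i=1}^{t-1}P_{n-2,\bullet,i}(x)\bigr)$ plus lower-order pieces; once that lemma is in hand, the remaining double sums match termwise after a single reindexing $j\leftrightarrow t$, completing the induction.
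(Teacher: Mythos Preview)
Your overall plan (induction on $n$ using Lemmas~\ref{recur_t1} and~\ref{recur_t2}) is the right one, but there is a real gap in step four, and it stems from your assertion that ``we cannot hope for a term-by-term identity $P_{n,\tau_1,s,t}=P_{n,\tau_2,s,t}$.'' In fact one \emph{can}, for $1\le t<s$, and that is exactly the observation that makes the paper's argument short.

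Here is the issue with your plan as written. After summing over $t$, the ``remaining double sums'' you refer to are things like $\sum_{t}\sum_{\ell<t}P_{n-1,\tau_1,s-1,\ell}$, $\sum_{t}(s-1-t)xP_{n-1,\tau_1,s-1,t}$, and $\sum_{t}\sum_{j>t}P_{n-1,\tau_1,j,t}$. These all live at the refined $(s,t)$ level; no reindexing $j\leftrightarrow t$ turns them into $s$-level quantities $P_{n-1,\tau_1,j}$. So your stated inductive hypothesis, which is only $P_{m,\tau_1,s}=P_{m,\tau_2,s}$ for $m<n$, is not enough to match them with their $\tau_2$ counterparts. The proposed auxiliary identity (which, incidentally, has a stray $n-2$ where $n-1$ is meant) handles only the ``discrepancy'' pieces and leaves the bulk of the sum untouched.

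The fix is to strengthen the induction, and the paper does this in one line. From Lemmas~\ref{recur_t1} and~\ref{recur_t2},
\[
P_{m,\tau_1,t,t}(x)=1+\sum_{i=1}^{t-1}P_{m-1,\tau_1,i}(x),\qquad
P_{m,\tau_2,t-1,t}(x)=1+\sum_{i=1}^{t-1}P_{m-1,\tau_2,i}(x),
\]
and these are equal by the $s$-level hypothesis at level $m-1$. Substituting this into the two ``first formulas'' shows that $P_{m,\tau_1,s,t}$ and $P_{m,\tau_2,s,t}$ satisfy the \emph{identical} recursion for $1\le t<s$, hence $P_{m,\tau_1,s,t}=P_{m,\tau_2,s,t}$ for all such $m,s,t$ by induction on $m$. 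With that in hand, only the boundary terms differ, and
\[
P_{n,\tau_1,s,s}(x)+P_{n,\tau_1,s,n}(x)=\Bigl(1+\sum_{i=1}^{s-1}P_{n-1,\tau_1,i}\Bigr)+P_{n-1,\tau_1,s}
=1+\sum_{i=1}^{s}P_{n-1,\tau_2,i}=P_{n,\tau_2,s,s+1}(x)
\]
by the $s$-level hypothesis at level $n-1$. This is the whole proof; no telescoping or further bookkeeping is needed.
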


\begin{proof}

First of all, $P_{n,\tau_1,t,t}(x) = P_{n,\tau_2,t-1,t}(x)$ by Lemma \ref{recur_t1} and \ref{recur_t2}. As a result, $P_{n,\tau_1,s,t}(x)$ and $P_{n,\tau_2,s,t}(x)$ have the same recursive formula for $1\le t <s$. Then, we proceed to prove the lemma as follows.

\begin{align*}
P_{n,\tau_1,s}(x) &= \sum_{t=1}^s P_{n,\tau_1,s,t}(x) + P_{n,\tau_1,s,n}(x) &\text{By } (\ref{start_t1})\\
&= \sum_{t=1}^{s-1} P_{n,\tau_1,s,t}(x) + P_{n,\tau_1,s,s}(x) + P_{n,\tau_1,s,n}(x) \\
&= \sum_{t=1}^{s-1} P_{n,\tau_1,s,t}(x) + \left ( 1+\sum_{\ell=1}^{s-1} P_{n-1,\tau_1,\ell}(x) \right ) + P_{n-1,\tau_1,s}(x) &\text{By Lemma } \ref{recur_t1}\\
&= \sum_{t=1}^{s-1} P_{n,\tau_1,s,t}(x) + 1+\sum_{i=1}^{s} P_{n-1,\tau_1,i}(x) 
\end{align*}

Here, notice that $P_{n,\tau_1,s,t}(x) = P_{n,\tau_2,s,t}(x)$ as we discussed above, and $P_{n-1,\tau_1,i}(x) = P_{n-1,\tau_2,i}(x)$ by inductive hypothesis. Thus, we have
\begin{align*}
P_{n,\tau_1,s}(x) &= \sum_{t=1}^{s-1} P_{n,\tau_2,s,t}(x) + 1+\sum_{i=1}^{s} P_{n-1,\tau_2,i}(x) \\
&= \sum_{t=1}^{s-1} P_{n,\tau_2,s,t}(x) + P_{n,\tau_2,s,s+1}(x) &\text{By Lemma } \ref{recur_t2}\\
&= P_{n,\tau_2,s}(x) &\text{By } (\ref{start_t2})
\end{align*}

Therefore, $P_{n,\tau_1,s}(x) = P_{n,\tau_2,s}(x)$ for all $1 \le s \le n-1$.

\end{proof}

As a consequence, $P_{n,\tau_1}(x) = P_{n,\tau_2}(x)$, and so $\tau_1$ and $\tau_2$ are $PMP$-Wilf-equivalent. Hence, we prove Theorem \ref{len4}.

We end this section here by introducing collections of patterns that are likely to be equivalent to one another, but we do not yet have a proof. According to numerical values generated, we guess that $\un1 2 3 4, \un 1 4 3 2$ and $\un 1 2 4 3$ are all equivalent to one another. Also, $1 3 \un 4 2$ and $\un 2 4 1 3$ are likely to be equivalent. Diagrams of mentioned positional marked patterns are shown in Figure \ref{conj1} and \ref{conj2}.

\begin{figure}[h]
\begin{center}
\shortstack{$\young(\hfil \hfil \hfil \times,\hfil \hfil \times \hfil,\hfil \times \hfil \hfil,\circ \hfil \hfil \hfil )$ \\ $\un 1234 $} 
\shortstack{$\young(\hfil \times \hfil \hfil  ,\hfil \hfil \times \hfil,\hfil \hfil \hfil \times,\circ \hfil \hfil \hfil )$ \\ $\un1432$}
\shortstack{$\young(\hfil \hfil \times \hfil  ,\hfil \hfil \hfil \times,\hfil \times \hfil \hfil,\circ \hfil \hfil \hfil )$ \\ $\un1243$}
\end{center}
\caption{Patterns $\un 1234, \un1432 $ and $\un1243$}
\label{conj1}
\end{figure}

\begin{figure}[h]
\begin{center}
\shortstack{$\young(\hfil \times \hfil \hfil  ,\hfil \hfil \hfil \times,\circ \hfil \hfil \hfil,\hfil \hfil \times \hfil )$ \\ $\un2413$}
\shortstack{$\young(\hfil \hfil \circ \hfil  ,\hfil \times \hfil \hfil,\hfil \hfil \hfil \times,\times \hfil \hfil \hfil )$ \\ $13\un42$}\end{center}
\caption{Patterns $\un2413$ and $13\un42$}
\label{conj2}
\end{figure}

\end{subsection}

\end{section}

\begin{section}{Patterns of arbitrary length}\label{lenn}

In this section, we restate and prove an equivalence of a collection positional marked patterns of arbitrary length mentioned in Section \ref{intro}.

\lengthn*

We will prove that $P_1$ and $P_2$ are $PMP$-Wilf-equivalent. The equivalence of $P_1$ and $P_3$ follows from the fact that $P_1$ and $P_2$ are equivalent. Here, we apply the technique introduced in \cite{BW} to prove the equivalence of $P_1$ and $P_2$. First, we need to give several definitions. Let $\tau = red(p_1p_2 \ldots p_{\ell-2})$.

\begin{definition}
Given $\pi \in S_n$. Consider the diagram of $\pi$. For each cell $(i,j)$ in the diagram of $\pi$, the cell is {\em dominant} if there is an occurrence of $\tau$ in the diagram of $\pi$ when only considering row $i+1,i+2,\ldots,n$ and columns $j+1, j+2,\ldots,n$. A cell is {\em non-dominant} if it is not dominant.
\end{definition}

It is clear that given any dominant cell, every cell to the left and below the dominant cell is also dominant. Thus, the collection of dominant cells form a Ferrers board.

Let $ND(\pi) = \set{(i,j) \in [n] \times [n] \hspace{.2cm} | \hspace{.2cm} \text{cell } (i,j) \text{ is non-dominant and contains } \times}$. Note that, if a cell is dominant, then one could find an occurrence of $\tau$ above and to the right of the cell such that all $\times$ involved are in non-dominant cell. If not, then every occurrence of $\tau$ above and to the right of the cell $(i,j)$ contains $\times$ in a dominant cell. Pick a copy $T_1$ of $\tau$ in which a dominant cell $(i',j')$ containing $\times$ is the rightmost among all dominant cells containing $\times$ in all copies of $\tau$ above and to the right of $(i,j)$. Since cell $(i',j')$ is also dominant, one can find a copy $T_2$ of $\tau$ above and to the right of $(i',j')$, which is also above and to the right of cell $(i,j)$. Thus, this copy will also contain a dominant cell containing $\times$, contradicting to the fact that the $T_1$ contains the rightmost dominant cell.

Thus, if $ND(\pi)$ is known, one can recover the set of dominant cells in the diagram of $\pi$ completely. Given any $Q \subseteq [n] \times [n]$, let $S^Q_n = \set{\pi \in S_n | ND(\pi) = Q}$. We shall prove that

$$\sum_{\pi \in S^Q_n} x^{pmp_{P_1}(\pi)} = \sum_{\pi \in S^Q_n} x^{pmp_{P_2}(\pi)}.$$

Here, we analyse the set $S_n^Q$. First, we only need to consider those $Q$ such that $S_n^Q \neq \emptyset$. Given any such $Q$, we can obtain an element in $S_n^Q$ by filling $\times$ in the dominant part of the diagram until every row and column contains precisely one $\times$. This gives all elements in $S_n^Q$ since filling $\times$ in the dominant part does not alter whether a cell is dominant or non-dominant.

To fill $\times$ in dominant cells, we start with all dominant cells and eliminate all rows and columns that already contain $\times$ from the set $Q$. The remaining cells form a Ferrers board. Let $\lambda(Q)$ denote the shape of the Ferrers board obtained from the process above.

\begin{definition}
Given any Ferrers board of shape $\lambda = (\la_1,\la_2,\ldots,\la_k)$ with $\la_1 \ge \la_2 \ge \ldots \ge \la_k$, {\em a filling of $\lambda$} is an assignment of $\times$ in the Ferrers board of shape $\lambda$ such that every row and columns contain precisely one $\times$.
\end{definition}

In order for $\lambda$ to have a filling, the number of rows of $\lambda$ must be the same as the number of columns in $\lambda$. More specifically, if $\lambda = (\lambda_1,\lambda_2, \ldots, \lambda_k)$, $\lambda$ has a filling if and only if $(k-i+1) \le \lambda_i \le k$ for all $k$. Let $S_\lambda$ denote the set of fillings of $\lambda$.

Given any $\pi \in S_\lambda$, and $p \in S_k$, we say that $\pi$ contains $p$ if $\pi$ contains a subdiagram of $p$. Note that, in order for $\pi$ to contain $p$, the {\em entire} diagram of $p$ has to present as a subdiagram of $\pi$ including cells not containing $\times$. For example, the filling below contains $p=12$ but does not contain $q = 21$.

\begin{center}\young(\times,\hfil \times,\hfil \hfil \hfil \times,\hfil \hfil \times \hfil)\end{center}

Here, we define positional marked pattern on $S_\lambda$ similar to $S_n$. Given any $\pi \in S_\lambda$ and $p \in S_k^*$, we say that $\pi$ has a $p$-match at position $t$ if when replacing $\times$ in the $t$-th column of $\pi$ of by $\circ$, then it contains a subdiagram $p$. For example, the diagram above has a $\un 1 2$-match at position 3 as shown below.

\begin{center}\young(\times,\hfil \times,\hfil \hfil \hfil \redx,\hfil \hfil \redo \hfil)\end{center}

Let $pmp_p(\pi)$ denote the number of positions $t$ such that $\pi$ has a $p$-match at position $t$. Here, we state a lemma which will be our main tool to prove Theorem \ref{lengthn}.

\begin{lemma} \label{bite} Given any $Q\subseteq [n]\times [n]$ such that $S_n^Q \neq \emptyset$. Then

$$\sum_{\pi \in S_n^Q} x^{pmp_{P_1}(\pi)} = \sum_{\pi \in S_\lambda(Q)} x^{pmp_{1\un2} (\pi)}$$

and

$$\sum_{\pi \in S_n^Q} x^{pmp_{P_2}(\pi)} = \sum_{\pi \in S_\lambda(Q)} x^{pmp_{\un2 1} (\pi)}.$$
\end{lemma}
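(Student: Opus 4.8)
The plan is to exhibit a single bijection $\Psi\colon S_n^Q\to S_\lambda(Q)$ and to show that it carries $pmp_{P_1}$ to $pmp_{1\un 2}$ and $pmp_{P_2}$ to $pmp_{\un 2 1}$ at the same time, from which the two displayed identities are immediate. The bijection is the evident one already implicit in the preceding discussion: for $\pi\in S_n^Q$ the $\times$'s of $\pi$ in non-dominant cells are exactly the prescribed set $Q$, while the $\times$'s of $\pi$ in dominant cells, after deleting the rows and columns occupied by $Q$, form a filling of $\lambda(Q)$; let $\Psi(\pi)$ be that filling. Conversely a filling of $\lambda(Q)$ reinserted into the dominant region together with the $\times$'s of $Q$ recovers an element of $S_n^Q$, using the already-noted fact that inserting $\times$'s inside the dominant region never changes which cells are dominant. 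So $\Psi$ is a bijection, and the content of the lemma is statistic-preservation.

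For statistic-preservation I would first give purely combinatorial descriptions of the two types of match. Write $i=\pi_c$ and $\tau=red(p_1\cdots p_{\ell-2})$. Since $\pi(P_1)=12p_1\cdots p_{\ell-2}$ lists its two smallest entries first and the tail $\tau$ afterwards, $\pi$ has a $P_1$-match at $c$ iff the cell $(i,c)$ is dominant (a copy of $\tau$ sits strictly above-right of it) and some $\times$ of $\pi$ lies strictly below-left of $(i,c)$; and in that case the small $\times$ automatically sits in a dominant cell, because the very copy of $\tau$ witnessing dominance of $(i,c)$ also lies above-right of the small $\times$. Since $\pi(P_2)=21p_1\cdots p_{\ell-2}$ lists the marked (larger) entry first, the small entry next, and the tail last, $\pi$ has a $P_2$-match at $c$ iff there is a column $b>c$ with $\pi_b<i$ and a copy of $\tau$ in rows $>i$ and columns $>b$, i.e. the cell $(i,b)$ is dominant; here too both the small $\times$ in column $b$ and the cell $(i,c)$ are forced to be dominant.

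The second step is to transport these descriptions through $\Psi$. Deleting whole rows and columns preserves the ``strictly below-left'' and ``strictly above-right'' relations, so a $\times$ of $\pi$ in a dominant cell corresponds to a $\times$ of $\Psi(\pi)$ in the matching cell of $\lambda(Q)$, and dominance of a cell $(i,b)$ in $\pi$ corresponds exactly to that cell lying in the Ferrers board $\lambda(Q)$. Now the diagram of $1\un 2$ has its $\circ$ strictly above-right of its $\times$, so for a filling of a Ferrers board the $2\times 2$ window demanded by a $1\un 2$-match is automatically contained in the board (its corner cells are down-left closures of cells already in $\lambda(Q)$); thus the $P_1$-description above translates verbatim into the $1\un 2$-description of $\Psi(\pi)$, and $pmp_{P_1}(\pi)=pmp_{1\un 2}(\Psi(\pi))$. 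By contrast $\un 2 1$ has its $\times$ strictly below-right of its $\circ$, so a $\un 2 1$-match at a column $t$ carries the extra requirement that the board cell in the row of the $\circ$ and the column of the $\times$ be present in $\lambda(Q)$ — and under $\Psi$ this extra requirement is precisely dominance of the cell $(i,b)$, that is, ``the tail $\tau$ fits both above the marked element and to the right of the small one.'' Matching the descriptions then gives $pmp_{P_2}(\pi)=pmp_{\un 2 1}(\Psi(\pi))$.

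The step I expect to be the real obstacle is exactly this asymmetry: the board-containment built into the definition of a match on $S_\lambda$ is vacuous for $1\un 2$ but an honest constraint for $\un 2 1$, and one must verify that, under $\Psi$, this constraint matches the condition that the tail of a $P_2$-occurrence simultaneously clears the marked entry in value and the small entry in position. Pinning down the two combinatorial characterizations of $P_1$- and $P_2$-matches precisely — in particular that the tail copy and the small entry always occupy cells of the required type — is the technical heart of the argument.
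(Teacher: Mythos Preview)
Your proposal is correct and follows essentially the same bijective approach as the paper: both use the natural restriction-to-dominant-cells bijection between $S_n^Q$ and $S_{\lambda(Q)}$ and verify that $P_i$-matches correspond to $1\un2$- or $\un21$-matches at the corresponding positions. Your treatment of the $\un21$ case is in fact more careful than the paper's, which simply asserts that ``the second equation can be proved with the exact same reasoning'' without isolating the board-containment constraint that you correctly identify and match up with dominance of the cell $(i,b)$.
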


\begin{proof}
Note that there is a natural bijection $\phi: S_{\lambda(Q)} \rightarrow S_n^Q$ which maps any filling $\pi \in \la(Q)$ to a permutation with non-dominant part corresponding to $Q$ and dominant part having the same filling as $\pi$. We will show that $\pi$ has a $1\un2$-match at a position if and only if $\phi(\pi)$ has a $P_1$-match at the corresponding position according to the map $\phi$

The converse is obvious. For the forward direction, suppose $\pi \in S_{\lambda(Q)}$ has a $1\un2$-match at a certain position. Thus, the corresponding position in $\phi(\pi)$ has a $1\un2$-match with every cells involved are dominant. Thus, we can find a copy of $\tau$ above and to the right of every cells involved in $1\un2$-match. Therefore, the $1\un2$-match together with $\tau$ makes $\phi(\pi)$ $P_1$-match at the position corresponding to the $1\un2$-match in $\pi$. 

Therefore, $\phi$ is a bijection between $S_{\la(Q)}$ and $S_n^Q$ such that $pmp_{1\un2}(\pi) = pmp_{P_1}(\phi(\pi))$. Thus, it proves the first equality of the lemma. The second equation can be proved with the exact same reasoning.
\end{proof}

As an example, let $n=9$, $\tau = 12$, (so $P_1 = 1\un234$ and $P_2 = \un2 1 3 4$), and $Q$ is as below

\begin{center}
\young(\hfil \hfil \hfil \hfil \hfil \hfil \hfil \hfil \times,\hfil \times \hfil \hfil \hfil \hfil \hfil \hfil \hfil,\hfil \hfil \hfil \hfil \hfil \hfil \hfil \hfil \hfil,\hfil \hfil \hfil \hfil \times \hfil \hfil \hfil \hfil,\hfil \hfil \hfil \hfil \hfil \hfil \hfil \hfil \hfil,\hfil \hfil \hfil \hfil \hfil \hfil \hfil \hfil \hfil,\hfil \hfil \hfil \hfil \hfil \hfil \hfil \times \hfil,\hfil \hfil \hfil \hfil \hfil \hfil 
\hfil \hfil \hfil,\hfil \hfil \hfil \hfil \hfil \hfil 
\hfil \hfil \hfil) .
\end{center}
 
With $Q$, we recover dominant and non-dominant cells. We fill $\bullet$ in non-dominant cells as below

\begin{center}
\young(\bullet \bullet \bullet \bullet \bullet \bullet \bullet \bullet \times,\bullet \times \bullet \bullet \bullet \bullet \bullet \bullet \bullet,\hfil \bullet \bullet \bullet \bullet \bullet \bullet \bullet \bullet,\hfil \bullet \bullet \bullet \times \bullet \bullet \bullet \bullet,\hfil \hfil \hfil \hfil \bullet \bullet \bullet \bullet \bullet,\hfil \hfil \hfil \hfil \bullet \bullet \bullet \bullet \bullet,\hfil \hfil \hfil \hfil \bullet \bullet \bullet \times \bullet,\hfil \hfil \hfil \hfil \hfil \hfil \hfil \bullet \bullet,\hfil \hfil \hfil \hfil \hfil \hfil \hfil \bullet \bullet) .
\end{center}

We want to fill $\times$ into the diagram so that every row and column contains precisely one $\times$, thus we eliminate all empty cells that are in same rows or columns with cell containing $\times$. We fill $\bullet$ in such cells,

\begin{center}
\young(\bullet \bullet \bullet \bullet \bullet \bullet \bullet \bullet \times
,\bullet \times \bullet \bullet \bullet \bullet \bullet \bullet \bullet
,\hfil \bullet \bullet \bullet \bullet \bullet \bullet \bullet \bullet
,\bullet \bullet \bullet \bullet \times \bullet \bullet \bullet \bullet
,\hfil \bullet \hfil \hfil \bullet \bullet \bullet \bullet \bullet
,\hfil \bullet \hfil \hfil \bullet \bullet \bullet \bullet \bullet
,\bullet \bullet \bullet \bullet \bullet \bullet \bullet \times \bullet
,\hfil \bullet \hfil \hfil \bullet \hfil \hfil \bullet \bullet
,\hfil \bullet \hfil \hfil \bullet \hfil \hfil \bullet \bullet) .
\end{center}

Thus, the remaining cells form a Ferrers board $\lambda(Q) = (5,5,3,3,1)$, as shown below:

\begin{center}
\young(\hfil
,\hfil\hfil\hfil
,\hfil\hfil\hfil
,\hfil\hfil\hfil\hfil\hfil
,\hfil\hfil\hfil\hfil\hfil) .
\end{center}

Then, to define the one-to-one corresponding between $S_9^Q$ and $S_{\la(Q)}$, we fill cells in $\lambda(Q)$ the same way we fill available cells in a diagram $Q$. For example, below is an example of the correspondence:

\begin{center}
$\pi =$
\young(\times
,\hfil\hfil\times
,\hfil\times\hfil
,\hfil\hfil\hfil\hfil\times
,\hfil\hfil\hfil\times\hfil)
$\longleftrightarrow$
$\phi(\pi) =$
\young(\bullet \bullet \bullet \bullet \bullet \bullet \bullet \bullet \times
,\bullet \times \bullet \bullet \bullet \bullet \bullet \bullet \bullet
,\bluex \bullet \bullet \bullet \bullet \bullet \bullet \bullet \bullet
,\bullet \bullet \bullet \bullet \times \bullet \bullet \bullet \bullet
,\hfil \bullet \hfil \bluex \bullet \bullet \bullet \bullet \bullet
,\hfil \bullet \bluex \hfil \bullet \bullet \bullet \bullet \bullet
,\bullet \bullet \bullet \bullet \bullet \bullet \bullet \times \bullet
,\hfil \bullet \hfil \hfil \bullet \hfil \bluex \bullet \bullet
,\hfil \bullet \hfil \hfil \bullet \bluex \hfil \bullet \bullet) .
\end{center}

With this map, $1\un2$-matching at position 3 of $\pi$ corresponds to $1 \un 2 3 4$-matching at position 4 of $\phi(\pi)$,

\begin{center}
$\pi =$
\young(\times
,\hfil\hfil\redo
,\hfil\redx\hfil
,\hfil\hfil\hfil\hfil\times
,\hfil\hfil\hfil\times\hfil)
$\longleftrightarrow$
$\phi(\pi) =$
\young(\bullet \bullet \bullet \bullet \bullet \bullet \bullet \bullet \greenx
,\bullet \times \bullet \bullet \bullet \bullet \bullet \bullet \bullet
,\times \bullet \bullet \bullet \bullet \bullet \bullet \bullet \bullet
,\bullet \bullet \bullet \bullet \greenx \bullet \bullet \bullet \bullet
,\hfil \bullet \hfil \redo\bullet \bullet \bullet \bullet \bullet
,\hfil \bullet \redx \hfil \bullet \bullet \bullet \bullet \bullet
,\bullet \bullet \bullet \bullet \bullet \bullet \bullet \times \bullet
,\hfil \bullet \hfil \hfil \bullet \hfil \times \bullet \bullet
,\hfil \bullet \hfil \hfil \bullet \times \hfil \bullet \bullet) .
\end{center}

Here, in order to prove theorem \ref{lengthn}, we only need to prove that, given any Ferrers board $\lambda$, $\sum_{\pi \in S_\la} x^{pmp_{1\un2}(\pi)} = \sum_{\pi \in S_\la} x^{pmp_{\un21}(\pi)}$. In fact, we find a formula for both polynomials.

\begin{lemma} \label{equivla1221}
Let $\la = (\la_1,\la_2,\ldots,\la_k)$ be a Ferrers board such that $(k-i+1) \le \la_i  \le k$ for all $1\le i \le k$. Then,

$$\sum_{\pi \in S_\la} x^{pmp_{1\un2}(\pi)}  = \sum_{\pi \in S_\la} x^{pmp_{\un21}(\pi)} = \prod_{i=1}^k (1+ (\la_i-(k-i+1))x).$$
\end{lemma}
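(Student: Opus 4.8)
The plan is to compute both generating functions directly by setting up a column-by-column (equivalently, bottom-row-by-bottom-row) recursion on the Ferrers board $\lambda$, and to show that the same product formula emerges in both cases. Fix $\lambda = (\lambda_1,\ldots,\lambda_k)$ with $(k-i+1)\le\lambda_i\le k$. I would analyze a filling $\pi\in S_\lambda$ by looking at the position of the $\times$ in the first (bottom) row, which occupies $\lambda_k$ cells. For the statistic $pmp_{\un21}$: if the bottom-row $\times$ sits in column $c$ (so $1\le c\le\lambda_k$), it plays the role of the underlined ``$1$'' in an occurrence of $\un21$ precisely when there is a $\times$ strictly above it and strictly to its left, i.e. in some column $<c$. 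Deleting row $1$ and column $c$ leaves a filling of a smaller Ferrers board $\lambda'$ (one needs to check $\lambda'$ is again Ferrers and admits a filling — this is where the inequality $(k-i+1)\le\lambda_i$ is used), and the bottom-row $\times$ contributes a factor $x$ to $pmp_{\un21}$ exactly when $c$ ranges over those columns that lie above some already-present cell; counting how many of the $\lambda_k$ available columns have this property yields the factor $1+(\lambda_k-1)x$ for the bottom row — but more care is needed because which columns ``count'' depends on the board, not just on $\lambda_k$.

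Because of that subtlety, I expect the cleaner route is to induct on $k$ while peeling off the \emph{last column} instead, or to peel off row $1$ but track the contribution more carefully: the key observation is that in a filling of $\lambda$, the cell in row $1$ can go in any of columns $1,\ldots,\lambda_k$, and once we remove row $1$ and that column and renumber, we get a filling of the Ferrers board $\lambda^{(c)}$ obtained by deleting row $1$ and column $c$. The combinatorial heart is: for exactly $\lambda_k-(k-(k-1+1)) = \lambda_k - 1$... — no; rather, one shows that summing $x^{[\text{bottom }\times\text{ matches}]}$ over the $\lambda_k$ choices, weighted by the sub-sums over $\lambda^{(c)}$, collapses (using the inductive formula for each $\lambda^{(c)}$) to $(1+(\lambda_k-1)x)\cdot\prod_{i=1}^{k-1}(1+(\lambda_i-(k-i))x)$, which is the desired product since the $i$-th factor for $\lambda^{(c)}$ has the form $1+(\lambda_i - (k-1-i+1))x$ regardless of $c$. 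I would verify this identity by a short direct computation (the dependence on $c$ genuinely cancels because each $\lambda^{(c)}$ has the same multiset of ``defects'' $\lambda_i-(k-i)$ for $i<k$), and the base case $k=0$ or $k=1$ is immediate.

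The identical argument handles $pmp_{1\un2}$: here the bottom-row $\times$ in column $c$ is the underlined ``$2$'' of an occurrence of $1\un2$ iff there is a $\times$ above and to the left, which is the \emph{same} condition as before; so the recursion for $\sum x^{pmp_{1\un2}(\pi)}$ is literally the same recursion, with the same base cases, hence the same value $\prod_{i=1}^k(1+(\lambda_i-(k-i+1))x)$. This gives both equalities in the lemma simultaneously, and combined with Lemma~\ref{bite} and the reduction already established, it completes the proof of Theorem~\ref{lengthn}.

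\textbf{Main obstacle.} The delicate point is the bookkeeping in the recursion: verifying that after deleting row $1$ and a chosen column $c$ the result is genuinely a Ferrers board with a valid filling, and — crucially — that the resulting product $\prod_{i<k}(1+(\lambda_i-(k-i))x)$ does not depend on $c$, so that the sum over the $\lambda_k$ choices of $c$ cleanly factors as $(1+(\lambda_k-1)x)$ times a common tail. Getting the indexing shift ``$k-i+1 \mapsto k-i$'' exactly right, and confirming the count of columns $c$ for which the bottom $\times$ registers a match equals $\lambda_k-1$ (equivalently $\lambda_k-(k-k+1)$, matching the $i=k$ factor of the claimed product), is the step most prone to off-by-one errors and deserves an explicit small example check.
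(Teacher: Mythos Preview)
Your strategy of peeling off one row and recursing is exactly the paper's, but you have the wrong row, and this is not merely cosmetic. The row to remove is the \emph{topmost} (shortest) row, of length $\lambda_k$; in the paper's convention the bottom row has $\lambda_1=k$ cells. The $\times$ in the top row carries the largest value, so in both $1\underline{2}$ and $\underline{2}1$ it can only be the underlined ``$2$'', never the ``$1$'' as you wrote. This matters for two reasons. First, the top-row $\times$ is too high to serve as the ``$1$'' witnessing a match at any other position, so removing it leaves the $pmp$ count on the remaining rows untouched. Second, the $\lambda_k$ columns of the top row are precisely the columns present in \emph{every} row, so deleting the top row together with whichever column $c\in\{1,\dots,\lambda_k\}$ holds its $\times$ always yields the same shape $\bar\lambda=(\lambda_1-1,\dots,\lambda_{k-1}-1)$, independent of $c$ --- there is no $c$-dependence to cancel, and the recursion factors cleanly as $(1+(\lambda_k-1)x)\sum_{\pi\in S_{\bar\lambda}}x^{pmp(\pi)}$. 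With the bottom row both of these fail: the bottom $\times$ (smallest value) can witness matches at higher rows, and deleting its column gives a shape that genuinely depends on $c$; for the staircase $\lambda=(3,2,1)$ only $c=3$ is even achievable, so your hoped-for uniformity of the $\lambda^{(c)}$ does not hold.

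One further correction: the match conditions at the top-row position for the two patterns are \emph{not} the same. A $1\underline{2}$-match there requires a $\times$ to the lower-left, hence $c\ge 2$ (not leftmost); a $\underline{2}1$-match requires a $\times$ to the lower-right with the full $2\times 2$ box still inside the board, hence $c\le\lambda_k-1$ (not rightmost). The two conditions are complementary, not identical --- each excludes exactly one of the $\lambda_k$ columns --- and that coincidence is what makes both recursions produce the same factor $1+(\lambda_k-1)x$.
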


\begin{proof}
Given any $\la$ satisfying $(k-i+1) \le \la_i  \le k$ for all $1\le i \le k$, let $\bar \la$ be a Ferrers board obtained from $\la$ by removing the top most row and left most column. That is $\bar \la = (\la_1-1,\la_2-1,\ldots, \la_{k-1}-1)$. Then, to prove the formula, we only need to prove that 

$$\sum_{\pi \in S_\la} x^{pmp_{1\un2}(\pi)}  = (1+ (\la_k-1)x)\sum_{\pi \in S_{\bar \la}} x^{pmp_{1\un2}(\pi)}.$$

To prove the equation above, we consider the topmost row in $\la$. The filling has a $1\un2$-match at the position of $\times$ in the topmost row if and only if $\times$ is not in the rightmost possible cell. So, there are $\la_k-1$ positions to fill $\times$ so that it has a $1\un 2$-match, and 1 position otherwise. Once the top row is filled, we consider filling the rest of $\la$ by remove the top row and the column containing $\times$. The remaining cells form a shape $\bar \la$. Thus, the equation above is proved.

For the pattern $\un2 1$, a similar reasoning can also be applied. The filling of $\la$ has a $\un2 1$-match at the position of $\times$ in the topmost row if and only if $\times$ is not in the {\em leftmost} possible cell. Hence, we have

$$\sum_{\pi \in S_\la} x^{pmp_{\un2 1}(\pi)}  = (1+ (\la_k-1)x)\sum_{\pi \in S_{\bar \la}} x^{pmp_{\un2 1}(\pi)}.$$

Therefore, we proved the lemma.

\end{proof}

Here, we prove the main theorem:

\begin{proof}{(of Theorem \ref{lengthn})}
Note that $S_n$ is a disjoint union of $S_n^Q$ for all $Q$ such that $S_n^Q \neq \emptyset$. So, we have

\begin{align*}
P_{n,P_1}(x) &= \sum_{\sigma \in S_n} x^{pmp_{P_1}(\sigma)} & &\\
&= \sum_Q \sum_{\sigma \in S^Q_n} x^{pmp_{P_1}(\sigma)}  & & \\
&= \sum_Q \sum_{\sigma \in S_{\la(Q)}} x^{pmp_{1\un2}(\sigma)}& & \text{By Lemma \ref{bite}}\\
&= \sum_Q \sum_{\sigma \in S_{\la(Q)}} x^{pmp_{\un21}(\sigma)} & & \text{By Lemma \ref{equivla1221}} \\
&= \sum_Q \sum_{\sigma \in S^Q_n} x^{pmp_{P_2}(\sigma)}  & & \text{By Lemma \ref{bite}} \\
&= \sum_{\sigma \in S_n} x^{pmp_{P_2}(\sigma)} = P_{n,P_2}(x) & & 
\end{align*}

Thus, $P_1$ and $P_2$ are $PMP$-Wilf-equivalent.

To see that $P_1 = 1\un2 p_1\ldots p_{n-2}$ and $P_3 = 2 \un 1  p_1\ldots p_{n-2}$, Let $P_1^{inv}$ and $P_3^{inv}$ be positional marked patterns obtained from $P_1$ and $P_3$ by reflecting diagram of $P_1$ and $P_3$ along the diagonal $y=x$ respectively. By Lemma \ref{sym}, $P_1$ and $P_1^{inv}$ are $PMP$-Wilf-equivalent, and so are $P_3$ and $P_3^{inv}$. However, $P_1^{inv}$ has the form $1 \un 2 q_1 \ldots q_{n-2}$ and $P_3^{inv}$ has the form of $\un2 1 q_1 \ldots q_{n-2}$, thus $P_1^{inv}$ and $P_3^{inv}$ are $PMP$-Wilf-equivalent by the main argument above. Therefore, $P_1$ and $P_3$ are $PMP$-Wilf-equivalent.

\end{proof}

\noindent {\bf Remark.} Alternately, we can prove that $P_2$ and $P_3$ are $PMP$-Wilf-equivalent by considering the rightmost column of any Ferrers board $\la$.

To end this section, we state possible nontrivial $PMP$-Wilf-equivalence classes of size 4. According to numerical data, we conjecture that $\un 1234 $, $\un1243$ and $\un1432$ are equivalent. Also, $134\un2$ and $\un2413$ are equivalent. 
\begin{center}
\shortstack{$\young(\hfil \hfil \hfil \times,\hfil \hfil \times \hfil,\hfil \times \hfil \hfil,\circ \hfil \hfil \hfil )$ \\ $\un 1234 $} 
\shortstack{$\young(\hfil \hfil \times \hfil  ,\hfil \hfil \hfil \times,\hfil \times \hfil \hfil,\circ \hfil \hfil \hfil )$ \\ $\un1243$} 
\shortstack{$\young(\hfil \times \hfil \hfil  ,\hfil \hfil \times \hfil,\hfil \hfil \hfil \times,\circ \hfil \hfil \hfil )$ \\ $\un1432$}
\hspace{1cm}
\shortstack{$\young(\hfil \hfil \times \hfil  ,\hfil \times \hfil \hfil,\hfil \hfil \hfil \circ,\times \hfil \hfil \hfil )$ \\ $134\un2$} 
\shortstack{$\young(\hfil \times \hfil \hfil  ,\hfil \hfil \hfil \times,\circ \hfil \hfil \hfil,\hfil \hfil \times \hfil )$ \\ $\un2413$} 
\end{center}

\end{section}

\begin{section}{Future research}\label{future}
One way to generalize positional marked pattern is to consider multiple patterns. Given a collection of positional marked patterns $\Gamma$ and $\sigma \in S_n$, we say that $\sigma$ has a $\Gamma$-match at position $\ell$ if $\sigma$ has a $\tau$-match at position $\ell$ for some $\tau \in \Gamma$. Given $\sigma \in S_n$, let $pmp_\Gamma(\sigma)$ denote the number of positions $\ell$ such that $\sigma$ has a $\Gamma$-match at position $\ell$. Let $P_{n,\Gamma}(x) = \sum_{\sigma \in S_n} x^{pmp_\Gamma(\sigma)}$. Given two collections of positional marked patterns $\Gamma_1$ and $\Gamma_2$, they are Wilf-equivalent if $P_{n,\Gamma_1}(x) = P_{n,\Gamma_2}(x)$. This is a nice generalization to positional marked patterns since the constant term of $P_{n,\Gamma}(x)$ enumerates the number of $\sigma \in S_n$ avoiding all patterns in $\Gamma$, which replicates what $P_{n,\tau}(x)$ provides in single pattern cases. Here, we stated, without proof, a result on collection of positional marked patterns similar to Theorem \ref{lengthn}. The proof also follows the same logic as in the proof of Theorem $\ref{lengthn}$.

\begin{theorem}
Let $\tau_1,\tau_2,\ldots,\tau_\ell$ be rearrangements of $\set{3,4,\ldots,k}$, so that $1\un2\tau_i$ and $\un21\tau_i$ are elements of $S_k^*$. Let $\Gamma_1 = \set{ 1\un2 \tau_i | 1 \le i \le \ell}$ and $\Gamma_2 = \set{ \un21 \tau_i | 1 \le i \le \ell}$. Then $\Gamma_1$ and $\Gamma_2$ are Wilf-equivalent.
\end{theorem}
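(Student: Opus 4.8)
The plan is to run the proof of Theorem~\ref{lengthn} almost verbatim, replacing the single pattern $\tau$ of Theorem~\ref{lengthn} everywhere by the finite set $T=\set{red(\tau_i) : 1\le i\le l}\subseteq S_{k-2}$. For $\pi\in S_n$, call a cell $(i,j)$ of the diagram of $\pi$ \emph{dominant} if some member of $T$ occurs in the diagram of $\pi$ restricted to rows $i+1,\ldots,n$ and columns $j+1,\ldots,n$, and \emph{non-dominant} otherwise. Since an occurrence of any $t\in T$ strictly above and to the right of $(i,j)$ is also strictly above and to the right of every $(i',j')$ with $i'\le i$ and $j'\le j$, the dominant cells still form a Ferrers board, and writing $ND(\pi)=\set{(i,j) : (i,j)\text{ is non-dominant and contains }\times}$, the recovery argument of the excerpt applies unchanged: if $(i,j)$ is dominant then some $t\in T$ occurs above and to the right of $(i,j)$ with all its $\times$'s in non-dominant cells, for otherwise a copy of some pattern of $T$ above and to the right of $(i,j)$ whose dominant $\times$-cell $(i',j')$ is rightmost among all such copies of all patterns of $T$ yields, via a copy of some pattern of $T$ above and to the right of $(i',j')$, a dominant $\times$-cell strictly to the right of $(i',j')$ — a contradiction. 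Hence $ND(\pi)$ determines the dominant region, $S_n=\bigsqcup_Q S_n^Q$ with $S_n^Q=\set{\pi : ND(\pi)=Q}$, and for each $Q$ with $S_n^Q\neq\emptyset$ there is a Ferrers board $\lambda(Q)$ and the natural bijection $\phi:S_{\lambda(Q)}\to S_n^Q$ obtained by filling the residual dominant cells, exactly as before.

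Next I would establish the analogue of Lemma~\ref{bite}:
$$\sum_{\pi\in S_n^Q}x^{pmp_{\Gamma_1}(\pi)}=\sum_{\pi\in S_{\lambda(Q)}}x^{pmp_{1\un2}(\pi)}\qquad\text{and}\qquad\sum_{\pi\in S_n^Q}x^{pmp_{\Gamma_2}(\pi)}=\sum_{\pi\in S_{\lambda(Q)}}x^{pmp_{\un21}(\pi)}.$$
Here the claim is that $\pi\in S_{\lambda(Q)}$ has a $1\un2$-match at a position if and only if $\phi(\pi)$ has a $\Gamma_1$-match at the corresponding position. For the forward direction, a $1\un2$-match in $\pi$ becomes a $1\un2$-match in $\phi(\pi)$ whose two cells are dominant; choosing an occurrence of some $t=red(\tau_m)$ above and to the right of the underlined dominant cell — automatically above and to the right of the other cell and above both in value — the $1\un2$-match together with that occurrence is precisely a $1\un2\tau_m$-match of $\phi(\pi)$, hence a $\Gamma_1$-match, at the corresponding position. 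For the converse, any $\Gamma_1$-match of $\phi(\pi)$ is an occurrence of some $1\un2\tau_m$; the occurrence of $\tau_m$ sits above and to the right of both of the first two cells, so those two cells are dominant and thus come from the filling, and deleting the tail leaves a $1\un2$-match of $\pi$ at the corresponding position. The $\Gamma_2$ versus $\un21$ statement is identical with the roles of the first two cells interchanged.

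Finally, the right-hand sides above no longer mention $T$, and Lemma~\ref{equivla1221} gives $\sum_{\pi\in S_\la}x^{pmp_{1\un2}(\pi)}=\sum_{\pi\in S_\la}x^{pmp_{\un21}(\pi)}$ for every Ferrers board $\la$ admitting a filling; summing over all $Q$ then yields
$$P_{n,\Gamma_1}(x)=\sum_Q\sum_{\pi\in S_{\lambda(Q)}}x^{pmp_{1\un2}(\pi)}=\sum_Q\sum_{\pi\in S_{\lambda(Q)}}x^{pmp_{\un21}(\pi)}=P_{n,\Gamma_2}(x),$$
which is the theorem. I expect the only point needing real attention is that the recovery of the dominant region and the lifting argument of the analogue of Lemma~\ref{bite} survive the passage from ``an occurrence of $\tau$'' to ``an occurrence of some member of $T$''; as sketched, the monotonicity of the dominant board and the extremality argument are unaffected, so no new idea is required and the remainder is bookkeeping.
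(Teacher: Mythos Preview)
Your proposal is correct and is exactly the approach the paper intends: the paper states this theorem without proof, remarking only that ``the proof also follows the same logic as in the proof of Theorem~\ref{lengthn},'' and your write-up carries out precisely that program, replacing the single tail pattern $\tau$ by the set $T=\{red(\tau_i)\}$ in the definition of dominant cells and checking that the Ferrers-board structure, the $ND$-recovery argument, the analogue of Lemma~\ref{bite}, and the reduction to Lemma~\ref{equivla1221} all go through unchanged. There is nothing to add.
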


Moreover, $\Gamma$-matching will let us realize positional marked pattern as a refinement of marked mesh pattern defined by Kitaev and Remmel \cite{KR}. Given $a,b,c,d \in \Z_{\ge 0}$, and given $\sigma \in S_n$, we say $\sigma$ matches $MMP(a,b,c,d)$ at position $l$ if in the diagram of $\sigma$ relative to the coordinate system which has $\times$ in the $l$-th column as its origin there are at least $a$ $\times$'s in quadrant I, at least $b$ $\times$'s in quadrant II, at least $c$ $\times$'s in quadrant III, and  at least $d$ $\times$'s in quadrant IV. As an example, $\sigma = 25417683$ matches $MMP(3,0,1,1)$ at position 3:

\begin{center}
\begin{tikzpicture}

\node (0,0) {\young(\hfil\hfil\hfil\hfil\hfil\hfil\times\hfil
,\hfil\hfil\hfil\hfil\times\hfil\hfil\hfil
,\hfil\hfil\hfil\hfil\hfil\times\hfil\hfil
,\hfil\times\hfil\hfil\hfil\hfil\hfil\hfil
,\hfil\hfil\times\hfil\hfil\hfil\hfil\hfil
,\hfil\hfil\hfil\hfil\hfil\hfil\hfil\times
,\times\hfil\hfil\hfil\hfil\hfil\hfil\hfil
,\hfil\hfil\hfil\times\hfil\hfil\hfil\hfil
)};

\draw[red] (-3,-0.24) -- (3,-0.24);
\draw[red] (-0.72,3) -- (-0.72,-3);

\draw (5,0) -- (9,0);
\draw (7,-2) -- (7,2);
\filldraw (7,0) circle (2pt);
\node at (8,1) {I};
\node at (6,1) {II};
\node at (6,-1) {III};
\node at (8,-1) {IV};

\end{tikzpicture}.
\end{center}

It is easy to see that, for any $a,b,c,d$, $MMP(a,b,c,d)$ is equivalent to $\Gamma$-matching for some collection of positional marked patterns $\Gamma$. For example, to match $MMP(2,0,0,0)$ at some position, it is the same as to match $\Gamma = \set{\un123, \un132}$ at the same position. Therefore, as we introduce multiple positional marked patterns, one can realize positional marked patterns as a refinement of marked mesh patterns.

\end{section}




\bibliographystyle{abbrvnat}
\bibliography{bibtex}
\label{sec:biblio}






\end{document}